\numberwithin{equation}{section}
\theoremstyle{plain}
\newtheorem{theorem}[equation]{Theorem} 
\newtheorem{lemma}[equation]{Lemma}    
\newtheorem{proposition}[equation]{Proposition}   
\newtheorem{corollary}[equation]{Corollary}
\newcolumntype{M}[1]{>{\raggedright}m{#1}}
\newcounter{thm}
\newtheorem{main_theorem}[thm]{Theorem}
\newtheorem{main_corollary}[thm]{Corollary}
\theoremstyle{remark}
\newtheorem*{remark*}{Remarks} 
\newtheorem*{example*}{Example}
\theoremstyle{definition}
\newtheorem{definition}[equation]{Definition}
\newtheorem{remark}[equation]{Remark}
\newcommand{\Ker}{\operatorname{Ker}}
\newcommand{\restr}{\mbox{\Large \(|\)\normalsize}}
\begin{document}

\title[$L^p$-cohomology: critical exponents and rigidity]
{$L^p$-cohomology\\ for higher rank spaces and groups:\\ critical exponents and rigidity}
 
\author{Marc Bourdon and Bertrand R\'emy} 
\maketitle

\begin{abstract} 
We initiate the investigation of critical exponents (in degree equal to the rank) for the vanishing of $L^p$-cohomology of higher rank Lie groups and related manifolds. 
We deal with the rank 2 case and exhibit such phenomena for ${\rm SL}_3({\bf R})$ and for a family of 5-dimensional solvable Lie groups. We use the critical exponents to compare the groups up to quasi-isometry. 
This leads us to exhibit a continuum of quasi-isometry classes of rank 2 irreducible solvable Lie groups of non-positive curvature. Along the proof, we provide a detailed description of the $L^p$-cohomology of the real and complex hyperbolic spaces. It is then combined with a spectral sequence argument, to derive our higher-rank results. 

\vspace*{2mm} \noindent{2010 Mathematics Subject Classification: } 20J05, 20J06, 22E15, 22E41, 53C35, 55B35, 57T10, 57T15.
%
%
%
%

\vspace*{2mm}

\noindent{Keywords and phrases: } $L^p$-cohomology, Lie~group, symmetric space, quasi-isometric invariance, spectral sequence, cohomology (non-)vanishing. 
\end{abstract}

\setlength{\parskip}{\smallskipamount}

\tableofcontents

\setlength{\parskip}{\medskipamount}


\section*{Introduction}
\label{1}

\subsection*{Overview}\label{overview}
$L^p$-cohomology, with $p \in (1, +\infty)$, provides a family of large scale geometry invariants for metric spaces and groups. 
It has many incarnations (such as asymptotic $L^p$-cohomology, or group $L^p$-cohomology via continuous cohomology of locally compact groups) which are all comparable to one another under suitable, not so demanding, conditions. 
Each variant brings its own insights: for instance asymptotic $L^p$-cohomology shows that $L^p$-cohomology is an invariant under quasi-isometry (in fact, under coarse isometry), and continuous cohomology allows one to use standard algebraic tools such as spectral sequences. 
In the present paper, we are interested in the de Rham $L^p$-cohomology, which we denote by $L^p \mathrm{H}^*_{\mathrm{dR}}$. Roughly speaking, we are dealing with forms satisfying, together with their differentials, $L^p$-integrability conditions with respect to measures given by suitable Riemannian metrics. 

References for $L^p$-cohomology include \cite{G} for a general overview, \cite{Pa95, Genton, Sequeira} for its invariance under quasi-isometry, \cite{Elek, CT, SaSc, BR} for group $L^p$-cohomology, \cite{BR, BR2, LN} for spectral sequences and applications, \cite{Pa99, P2, P1, Pa09, Sequeira} for de Rham cohomology of Lie groups. We also notice that \cite[§6]{BR2} contains a synthetic presentation of (some) of the several aspects of $L^p$-cohomology, as well as a description and comparison of their properties. 

$L^p$-cohomology of a (connected) Lie group is better understood in rank 1 situations (\footnote{The {\it rank} of a Lie group is the dimension of its asymptotic cones; it is therefore a quasi-isometric invariant. For semisimple Lie groups, this notion coincides with the $\mathbf R$-rank. When the group is simply connected and solvable, its rank is equal to the codimension of its exponential radical \cite[Corollary 1.3]{CornulierTop}. When the group admits a left-invariant Riemannian metric of non-positive curvature, its rank is the same as  the maximal dimension of a totally geodesic Euclidean subspace \cite{AW}. We thank Yves Cornulier for providing us with this definition.}), when contractions and negative curvature arguments can be used to perform some computations. 
In particular, critical exponent phenomena with respect to $p$  for vanishing vs non-vanishing of $L^p$-cohomology in degree 1, can be sometimes exhibited.
An iconic example of this phenomenon is provided by the following family of solvable Lie groups: for $\lambda \geqslant 1$, let $H_\lambda$ be the semi-direct product $H_\lambda = \mathbf R \ltimes_\lambda \mathbf R^2$, where $\mathbf R$ acts on $\mathbf R^2$ via the $1$-parameter group of automorphisms
$t \mapsto e^{-tA _\lambda}$, with 
$A_\lambda = \begin{pmatrix}1 & 0 \\ 0 & \lambda \end{pmatrix}$.
When $\lambda =1$, the group $H_\lambda$ is naturally isometric to the real hyperbolic $3$-space $\mathbb H _{\mathbf R} ^3$. In general $H _\lambda$ belongs to the family of the so-called {\it Heintze groups}, {\it i.e.\!} of the Lie groups that admit a left-invariant negatively curved Riemannian metric \cite{Hze}.
Pansu’s Theorem \cite{P1} shows that $L^p \mathrm{H}^1 _{\mathrm{dR}}(H _\lambda)$ vanishes for $p \in (1; 1+\lambda)$, and does not vanish for $p > 1+\lambda$. In other words, $1+\lambda$ is a critical exponent of the first $L^p$-cohomology of $H_\lambda$. As a consequence, the groups $H_\lambda$ are pairwise non-quasiisometric.
More generally, every Heintze group admits an explicit critical exponent in degree $1$ \cite{P2, CT}.


$L^p$-cohomology of higher rank Lie groups has attracted less attention so far. As a first step, it would be desirable to have a better understanding of $L ^p$-cohomology in degree equal to the rank.  Indeed, the first (reduced) $L^p$-cohomology of higher rank Lie groups is known to vanish for every $p$ (\footnote{ More precisely, apart from those which are quasi-isometric to an Heintze group, the reduced first $L^p$-cohomology of every Lie group vanihes for every $p$, and the non-reduced one vanishes if, and only, if the group is non-amenable or non-unimodular \cite{P2, CT}}); and vanishing for every $p$ is expected to remain true in any degree below the rank – at least for semisimple Lie groups  \cite[p. \!253]{G}  \cite{LN}. In degree equal to the rank, critical exponents are known to exist for several higher rank real Lie groups, including all the semisimple ones \cite{BR2}; but their values has not been determined yet.

In the present paper, we study the second $L^p$-cohomology of solvable Lie groups of rank $2$. 
More precisely, we exhibit, for some groups of this type, a critical exponent in degree $2$. We then use these critical exponents to derive a quasi-isometric rigidity result.
 
\subsection*{A family of solvable Lie groups}

We consider the solvable Lie groups of the form $S_\alpha = \mathbf R^2 \ltimes _\alpha \mathbf R^3$, where $$\alpha : \mathbf R^2 \to \{\mathrm{diagonal~~automorphisms~~of~~}\mathbf R^3\}$$ is a Lie group morphism. We denote by $\varpi _i \in (\mathbf R^2)^*$ ($i = 1, 2, 3$) the {\it weights} associated to $\alpha$, {\it i.e.\!} the linear forms such that
$\alpha = e ^{\mathrm{diag}(\varpi _{1}, \varpi _2 , \varpi _{3})}.$

We shall let $\mathcal S ^{2, 3}_{\mathrm{straight}}$ denote the set of the groups $S_\alpha$ whose weights enjoy the following two properties: 
\begin{itemize}
 \item they generate $(\mathbf R^2)^*$, 
 \item they belong to an affine line (necessarily disjoint from $0$).
\end{itemize}

Every group $S _\alpha \in \mathcal S^{2,3} _{\mathrm{straight}}$ is of rank $2$, admits a left-invariant Riemannian metric of non-positive curvature, and is irreducible provided its weights are pairwise distinct -- see Proposition \ref{appendix_prop} in the appendix for a proof of these properties (in a wider generality). Therefore, in some sense, $\mathcal S^{2,3} _{\mathrm{straight}}$ appears as the simplest family of irreducible higher rank non-positively curved Lie groups.

Our first result exhibits a critical exponent of the second $L^p$-cohomology of the groups that belong to $\mathcal S^{2,3}_{\mathrm{straight}}$. It answers partially a question of Cornulier.

\begin{main_theorem}
\label{S_mu-thm}
Let $S_\alpha \in \mathcal S ^{2,3}_{\mathrm{straight}}$. Its weights $\varpi _{1}, \varpi _2, \varpi _3$ belong to a line. Since permuting the coordinates of $\mathbf R^3$ preserves the isomorphism class of $S_\alpha$, we can (and will) assume that the algebraic distances between the weights on the (suitably oriented) line satisfy: $0 \leqslant \varpi _2-\varpi _3 \leqslant \varpi _1-\varpi _2$. Set
$$p_\alpha := 1+\frac{\varpi _1-\varpi _3}{\varpi _1-\varpi _2} \in [2;3].$$
Then 
$L^p {\rm H}^2_{\rm dR}(S_\alpha) = \{0\}$ for $p \in (1; p_\alpha) \setminus \{\frac{3}{2}\}$, and $L^p {\rm H}^2_{\rm dR}(S_\alpha) \neq \{0\}$ for $p \in (p_\alpha; +\infty) \setminus \{3\}$.
\end{main_theorem}

Observe that two triples $(\varpi _{1}, \varpi _{2}, \varpi _{3})$ and $(\varpi '_{1}, \varpi '_{2}, \varpi '_{3})$ -- of distinct elements in $(\mathbf R ^{2})^*$ that are aligned on lines disjoint from $0$ -- belong to the same $\mathrm{GL} _{2} (\mathbf R)$-orbit if, and only, if
$$\frac{\varpi _1-\varpi _3}{\varpi _1-\varpi _{2}} = \frac{\varpi '_1-\varpi '_3}{\varpi '_1-\varpi ' _{2}}.$$
Since precomposing $\alpha$ by an element of $\mathrm{GL} _{2} (\mathbf R)$ preserves the isomorphism class of $S _\alpha$, and since de Rham $L^p$ -cohomology is a quasi-isometry
invariant among Lie groups that are diffeomorphic to $\mathbf R^D$ \cite{Pa95},\cite[Appendice]{BR2}, Theorem \ref{S_mu-thm} admits the following rigidity consequence:

\begin{main_corollary}\label{intro_cor1}
Among the groups in $\mathcal S^{2,3} _{\mathrm{straight}}$, any two of them are quasi-isometric if, and only if, they are isomorphic.
\end{main_corollary}

Theorem \ref{S_mu-thm} also yields:

\begin{main_corollary}
There exists a continuum of quasi-isometry classes of rank $2$ solvable irreducible non-positively curved Lie groups.
\end{main_corollary}

\subsection*{The symmetric space ${\rm SL}_3({\mathbf R})/{\rm SO}_3({\mathbf R})$}
Going back to semisimple groups, the same approach -- which we describe below in more details -- enables us to exhibit another critical exponent in degree 2:  

\begin{main_theorem}
\label{SL3-theorem}
Let $S$ be the symmetric space ${\rm SL}_3({\mathbf R})/{\rm SO}_3({\mathbf R})$, or equivalently the Borel subgroup of ${\rm SL}_3({\mathbf R})$ consisting of upper triangular matrices with positive diagonal. Then $L^p \mathrm{H}_{\mathrm{dR}}^2 (S)$ is zero for $p \in (1; 2) \setminus \{\frac{4}{3}\}$, and non-zero for $p \in (2; +\infty) \setminus \{4\}$.
\end{main_theorem}

By Iwasawa decomposition, the symmetric space $S$ is naturally isometric to the Borel subgroup of ${\rm SL}_3({\mathbf R})$; the latter is a specific rank 2 solvable group, namely the semidirect product of ${\bf R}^2$ and of the Heisenberg group in dimension 3.

\subsection*{About the proofs} In both instances of the solvable groups dealt with in the above theorems, we can decompose the action of ${\bf R}^2$ on the 3-dimensional subgroup ${\bf R}^3$ (resp. \!on the Heisenberg group in dimension 3, which we denote by $\mathrm{Heis}(3)$) into two steps. 
In a first step, one factor $\bf R$ of $\bf R^2$ acts on ${\bf R}^3$ (resp. \!on $\mathrm{Heis}(3)$) so that the intermediate (rank 1) semidirect product is a non-unimodular solvable group isometric to the real (resp. \!complex) hyperbolic space of real dimension 4. 
Then, as a second step, we consider the action of the second factor $\bf R$ of ${\bf R}^2$ and use a spectral sequence argument, together with the fact that we understand in detail the cohomology of the intermediate 4-dimensional group of the first step. 
Thus, at this stage, proving the non-vanishing of the considered $L^p$-cohomology amounts to showing that some de Rham classes on the rank 1 group satisfy a certain $L^p$-integrability condition (see Section \ref{ss - non-vanishing}, and Relation \ref{SL3-iso} in Section \ref{SL3-proof}). 
The vanishing part requires to use a Poincar\'e duality argument in order to show the requested non-integrability of the relevant de Rham classes. 

The main result about the rank 1 intermediate solvable groups above is Theorem \ref{Lie-thm}. 
It provides a partial description of the $L^p$-cohomology of Lie groups containing a suitable 1-parameter subgroup of (semi) contractions acting on its complement. The obtained description complements some previous results of Pansu \cite[Sections 9 and 10]{Pa99}; 
we call it a {\it strip decomposition} since its hypotheses are stated as (double) inequalities that must be satisfied by the exponent $p$ with respect to quantities depending on the degree $k$ of the cohomology and on the infinitesimal eigenvalues of the contraction group. 
The conclusions deal with the following properties: vanishing, Hausdorff property, density of some explicit subspaces of closed forms, and finally Poincar\'e duality realized at infinity, {\it i.e.} on the group-theoretic complement of the contraction group (which is a Lie group seen as a boundary of the ambient group). 

The statement of Theorem \ref{Lie-thm} is applied to the special semidirect products $\mathbf R \ltimes \mathbf R^n$ and $\mathbf R \ltimes \mathrm{Heis}(2m-1)$, that are isometric to the real hyperbolic space $\mathbb H ^{n+1}_{\mathbf R}$ and to the complex one $\mathbb H ^{m}_{\mathbf C}$. We obtain in this way a rather precise description of the $L^p$-cohomology of $\mathbb H ^{n+1}_{\mathbf R}$. In the case of the complex hyperbolic space, the information given by Theorem \ref{Lie-thm} is fragmented, and a substantial additional amount of work dealing with Heisenberg groups of arbitrary dimension, elaborating on ideas due to Rumin \cite{Rumin} 
and Pansu \cite{Pa09}, is required.

Again, from a technical point of view, the paper deals with de Rham cohomology only, and some of our results are valid for Riemannian manifolds endowed with a suitable contracting vector field, even though the main applications are relevant to the Lie group situation. This applies in particular to the main new technical result (Theorem \ref{boundary-thm}) which translates the Poincar\'e duality in terms of currents on the ``boundary''.

Let us finish this introduction with some remarks. 

\begin{remark}\label{remark_1} Pansu has already used $L^p$-cohomology to show that the groups $H_\alpha := \mathbf R \ltimes _\alpha \mathbf R ^n$, with $$\alpha (t)= \mathrm{diag}(e^{-\alpha _1 t}, \dots, e^{-\alpha _n t})~~\ \mathrm{and}~~\  
1= \alpha _1 \leqslant \alpha _2 \leqslant \dots \leqslant \alpha _n,$$ form a continuous family of pairwise non-quasiisometric Heintze groups \cite[Corollary 2]{Pa99}, \cite{Sequeira}. This result has been generalized by Xie \cite[Corollary 1.3]{Xie} to non-diagonal automorphisms, by using more geometric methods.
\end{remark}

\begin{remark} It is a natural to ask whether the rigidity statement of Corollary \ref{intro_cor1} remains true for other family of groups, like the families $\mathcal S^{r ,n}$ considered in the appendix. This question has been already studied in some special cases, {\it e.g.\!} for the family evoked just above in Remark \ref{remark_1}. In \cite{EskinFisherWhyte}, the authors study the groups $\mathrm{Sol} _\lambda := \mathbf R \ltimes_\lambda \mathbf R^2$, where $\lambda \geqslant 1$, and where $\mathbf R$ acts on $\mathbf R ^2$ via $t \mapsto e ^{tB _\lambda}$, with $B _\lambda = \begin{pmatrix}1 & 0 \\ 0 & -\lambda \end{pmatrix}$. They show that these groups are pairwise non-quasiisometric. Observe that the groups $\mathrm{Sol} _\lambda$ are rank $1$ solvable Lie groups that do not carry any non-positively curved left-invariant Riemannian metric -- see Proposition \ref{appendix_prop}(3). 

In \cite[Corollary 5.3.7]{Pen2}, Peng establishes that if $G= \mathbf R ^r \ltimes _\varphi \mathbf R ^n$ and $G' = \mathbf R ^{r'} \ltimes _ {\varphi '} \mathbf R ^{n'}$ are two quasi-isometric, \emph{unimodular}, non-degenerate split abelian-by-abelian solvable Lie groups, then there exists an isomorphim $f: \mathbf R ^r \to \mathbf R ^{r'}$ such that $\varphi$ and $\varphi' \circ f$ have the same Jordan form. Again, due to the unimodularity, these groups do not carry any non-positively curved left-invariant Riemannian metric. 
\end{remark}

\begin{remark} Examples of non-positively curved, rank $2$, {\it reducible}, solvable Lie groups, include the groups $\mathbf R \times H _\lambda$ (in dimension $4$), and the groups $\mathbb H ^2 _{\mathbf R} \times H _\lambda$ (in dimension $5$); where $H_\lambda$ ($\lambda \geqslant 1$) is the Heintze group defined in the overview. As a consequence of a general quasi-isometric rigidity theorem for product metric spaces \cite[Theorem B]{KKL}, all these groups are pairwise non-quasiisometric.
\end{remark}

\begin{remark} Our results on de Rham $L^p$-cohomology of hyperbolic spaces can be compared with Borel's on $L^2$-cohomology of symmetric spaces \cite{Borel}. 
It turns out that for complex hyperbolic spaces our results are complementary in the sense that the exponent $p=2$ is never contained in the interior of the strips we distinguish.
Nevertheless, for $\mathbb H^{m} _{\bf C}$ it is in the closure (and in the middle) of the union $]2{m \over m+1} ; 2[ \,\, \sqcup \,\, ]2 ; 2{m \over m-1}[$ of two critical strips. 
For $p$ in the interior of each segment, our Theorem \ref{complex-thm} says that $L^p{\rm H}^m_{\rm dR}(\mathbb H^{m} _{\bf C})$ is Hausdorff and non-zero, while Theorem A of [loc.\! cit.] says that $L^2{\rm H}^m_{\rm dR}(\mathbb H^{m} _{\bf C})$ is Hausdorff and non-zero. Moreover it describes the latter space in representation-theoretic terms.
For real hyperbolic spaces $\mathbb H^{n+1} _{\bf R}$, we have to distinguish two cases according to the parity of $n$. 
When $n$ is odd, our Theorem \ref{real-thm} recovers Theorem A(i) of [loc.\! cit.], saying that $L^2{\rm H}^\bullet_{\rm dR}(\mathbb H^{n+1} _{\bf R})$ is Hausdorff and concentrated in degree ${n+1 \over 2}$.
When $n$ is even, Theorem B of [loc.\! cit.] complements our result, saying that $L^2\overline{{\rm H}^\bullet_{\rm dR}}(\mathbb H^{n+1} _{\bf R})$ is zero and $L^2{\rm H}^\bullet_{\rm dR}(\mathbb H^{n+1} _{\bf R})$ is not Hausdorff in degree ${n \over 2} + 1$. 
\end{remark}

\subsection*{Structure of the paper} 
Section \ref{s - currents} introduces currents in the context of $L^p$-cohomology; it also recalls Poincar\'e duality for the reduced variant of it. 
Section \ref{s - flows} introduces flows with suitable contraction properties on manifolds; it describes their effects on $L^p$-cohomology and introduces a version of Poincar\'e duality involving currents on the "boundary" of such a manifold. 
In Section \ref{s - Lie}, the situation is specialized to the case of Lie groups; the existence of a suitable 1-dimensional (semi) contracting group leads to the strip description of the $L^p$-cohomology of the groups under consideration. 
In Section \ref{s - real hyp sp}, we apply the result of the previous section to deduce the description of the $L^p$-cohomology of real hyperbolic spaces. 
Section \ref{s - non QI solvable} focusses on the proof of Theorem \ref{S_mu-thm} about the second $L^p$-cohomology of the groups $S _\alpha$; this is where we determine our first critical exponent.
Section \ref{s - complex hyp sp} provides a description of the $L^p$-cohomology of complex hyperbolic spaces; this requires more care than for the real case, and in particular it leads to an intensive use of Heisenberg groups. 
In Section \ref{s - sl3}, using the same strategy as in Section \ref{s - non QI solvable}, we determine our second higher-rank critical exponent, this time for the symmetric space 
$\mathrm{SL}_3 (\mathbf R)/{\rm SO}_3({\mathbf R})$. At last, an appendix deals with the groups $S _\alpha$ in a wider generality, and establishes some of their basic properties.

\subsection*{Acknowledgements} We would like to thank Gabriel Pallier for several helpful discussions and references on the subject of this paper. Special thanks to Yves Cornulier who drawn our attention to the groups $S _\alpha$, and asked several questions that have motivated this work. We also thank him for his numerous comments on a first version of the paper.
M.B.\! was partially supported by the Labex Cempi.

\section{Currents and $L^p$-cohomology}
\label{s - currents} 

In this section, we give a quick presentation of de Rham $L^p$-cohomology and related topics.

\subsection{Currents}
\label{current-sec} 
Currents play a central role in classical de Rham cohomology. 
We recall some of the basic definitions and properties useful for the $L^p$ variant (see \cite{DS} for more informations).

Let $M$ be a $C^\infty$ orientable $D$-manifold without boundary.
For $k \in \mathbf Z$, let $\Omega^k(M)$ be the space of $C^\infty$ differential $k$-forms on $M$, and let $\Omega _c^k (M)$ be the space of compactly supported $C^\infty$ differential $k$-forms, endowed with the $C^\infty$ topology. 
As usual we set $\Omega^k(M)=\Omega _c^k (M) = \{0\}$ for $k<0$. 

A \emph{$k$-current} on $M$ is by definition a continuous real valued linear form on $\Omega _c^{D-k} (M)$. 
We denote by $\mathcal D '^k(M)$ the space of $k$-currents on $M$ endowed with the weak*-topology. 

To every $\omega \in \Omega^k(M)$, one associates the $k$-current $T_\omega$ defined by $T_\omega (\alpha):= \int _M \omega \wedge \alpha$. 
This defines an embedding of $\Omega^k(M)$ into $\mathcal D '^k(M)$, whose image is known to be dense.
The \textit{differential} of a $k$-current $T$ is the $(k+1)$-current $dT$ defined by $dT(\alpha):= (-1)^{k+1} T(d\alpha)$, for every $\alpha \in \Omega _c^{D-k-1} (M)$.
The so-obtained map $d$ satisfies $d \circ d =0$. 
Since $M$ is assumed to have no boundary, this definition is consistent with Stokes' formula:
$$\int _M d\omega \wedge \alpha = (-1)^{k+1} \int _M \omega \wedge d \alpha,$$
and gives in particular: $dT_\omega = T_{d\omega}$. 

More generally, suppose we are given $\ell \in \mathbf Z$, and a continuous linear operator $L : \Omega ^*(M) \to \Omega ^{*-\ell}(N)$ -- where $M$ and $N$ are orientable manifolds of dimension $D_M$ and $D_N$ respectively -- such that there is a continuous operator $\tilde L : \Omega _c ^{D_N -* +\ell} (N) \to \Omega _c ^{D_M - *}(M)$, with 
$$\int _N L(\omega) \wedge \alpha = \int _M \omega \wedge \tilde L (\alpha),$$
for every $\omega \in \Omega ^*(M)$ and $\alpha \in \Omega _c ^{D_N - * +\ell}(N)$. 
Then $L$ extends by continuity to $\mathcal D '^*(M)\to \mathcal D '^{*-\ell}(N)$, by setting $(L(T))(\alpha):=T(\tilde L(\alpha))$. 
This applies {\it e.g.\!} to inner products $\iota _\xi : \Omega ^*(M) \to \Omega ^{*-1}(M)$ by a vector field $\xi$ on $M$. One has $\tilde {\iota _\xi}=(-1)^{k+1} \iota _\xi $ on $\Omega _c ^{D - k + 1}(M)$, since $\iota _\xi$ is an anti-derivation (see {\it e.g.\!} \cite[Proposition 20.8]{Tu}). 

In local coordinates $(x_1, ..., x_D)$ on an open
subset $U \subset M$, every $k$-current $T \in \mathcal D'^k(U)$ can be written 
$T = \sum _{\vert I \vert =k} T_I dx_I$,
with $T _I \in \mathcal D'^0 (U)$. For every $\alpha \in \Omega _c ^{D-k}(U)$, 
one has $T(\alpha) =  \sum _{\vert I \vert =k} T_I (dx_I\wedge \alpha)$.

\subsection{De Rham $L^p$-cohomology: definitions and notation}
\label{sec-prelim}

We list and fix the definitions and notations for several objects that will appear repeatedly in the paper. 

Let $M$ be a $C^\infty$ orientable manifold (without boundary), henceforth endowed with a Riemannian metric.
We denote by $d\mathrm{vol}$ its Riemannian measure, and by $\vert v \vert$ the Riemannian length of a vector $v \in TM$. 

\begin{itemize}
\item Let $p\in (1, +\infty)$. The \textit{$L^p$-norm of $\omega \in \Omega^k (M)$} is
$$\Vert \omega \Vert _{L^p \Omega^k} = \bigl(\int _M \vert \omega \vert _m^p ~d\mathrm{vol}(m)\bigr)^{1/p},$$
where we set
$$\vert \omega \vert _m := \sup \{ \vert \omega (m; v_1, \dots, v_k) \vert: v_1, \dots, v_k \in T_m M, ~\vert v_i \vert = 1\}.$$

\item The space $L^p \Omega^k (M)$ is the norm completion of the normed space $\{\omega \in \Omega^k (M): \Vert \omega \Vert _{L^p \Omega^k} < +\infty \}$, \emph{i.e.}~ the Banach space of $k$-differential forms with measurable $L^p$ coefficients.

\item To every $\omega \in L^p \Omega^k(M)$, one associates the $k$-current $T_\omega$ defined by $T_\omega (\alpha):= \int _M \omega \wedge \alpha$.
The \textit{differential in the sense of currents} of $\omega \in L^p \Omega^k(M)$ is the $(k+1)$-current  $d\omega:= dT_\omega$.  
One says that $d\omega$ \textit{belongs to $L^p \Omega^{k+1}(M)$} if there exists $\theta \in L^p \Omega^{k+1}(M)$ such $d\omega = T_\theta$.
In this case we set $\Vert d\omega \Vert _{L^p \Omega^{k+1}}:= \Vert \theta \Vert _{L^p \Omega^{k+1}}$.

\item For $\omega \in \Omega^k (M)$, we set
$$\Vert \omega \Vert _{\Omega^{p,k}}:= \Vert \omega \Vert _{L^p \Omega^k} + \Vert d\omega \Vert _{L^p \Omega^{k+1}}.$$ 
The space $\Omega^{p,k} (M)$ is the norm completion of the normed space 
$\{\omega \in \Omega^k (M): \Vert \omega \Vert _{\Omega^{p,k}} < +\infty \}$. It is a Banach space that coincides with the subspace 
of $L^p \Omega^k (M)$ consisting 
of the $L^p$ $k$-forms whose differentials in the sense of currents belong to $L^p \Omega^{k+1} (M)$. 
Moreover the differential operator $d$ on $\Omega^{p,*} (M)$ agrees with the differential in the sense of currents. (See {\it e.g.}\! \cite[Lemma 1.5]{BR2} for a proof). 

\item The \textit{de Rham $L^p$-cohomology} of $M$ is the cohomology of the complex
$$\Omega^{p,0} (M) \stackrel{d_0}{\to} \Omega^{p,1} (M) \stackrel{d_1}{\to} \Omega^{p,2} (M) \stackrel{d_2}{\to} \dots$$ 
It is denoted by $L^p \mathrm{H_{dR}^*} (M)$. 
Its largest Hausdorff quotient is denoted by  $L^p \overline{\mathrm{H^*_{dR}}} (M)$ and is called the \textit{reduced de Rham $L^p$-cohomology} of $M$. 
The latter is a Banach space; its (quotient) norm is denoted by $\Vert \cdot \Vert _{L^p \overline{\mathrm{H^*}}}$.

\item Following Pansu, we  also define $\Psi^{p,k} (M)$ to be the space of $k$-currents $\psi \in \mathcal D '^k(M)$ that can be written
$\psi = \beta + d\gamma$, with $\beta \in L^p \Omega^k (M)$ and $\gamma \in L^p \Omega^{k-1} (M)$. 
In particular we have $\Psi^{p,0}(M) = L^p(M)$. 
Equipped with the norm
\begin{align*} 
\Vert \psi \Vert _{\Psi^{p,k}} := &\inf \Bigl\{\Vert \beta \Vert _{L^p \Omega^k} + \Vert \gamma \Vert _{L^p \Omega^{k-1}}: 
\psi = \beta + d\gamma, \\
& ~~\mathrm{with}~~ \beta \in L^p \Omega^k (M)~~\mathrm{and}~~ \gamma \in L^p \Omega^{k-1} (M)\Bigr\},
\end{align*}
the space $\Psi^{p,k} (M)$ is a Banach space, and the inclusion maps between differential complexes:
$$\Omega ^{p,*}(M) \subset \Psi^{p,*} (M) \subset \mathcal D '^*(M)$$
are continuous (see \cite[Lemma 1.3]{BR2} for a proof).

\item Suppose that $M$ carries a $C^\infty$ unit complete vector field $\xi$, and let $(\varphi _t)_{t \in{\mathbf R}}$ be its flow. 
Assume that $\varphi _t^*: L^p \Omega^k (M) \to L^p \Omega^k (M)$ is bounded for all $t \in \bf R$, $p\in (1, +\infty)$ and $k \in \bf N$. 
We set 
$$\ \ \ \Psi^{p,k} (M, \xi):= \{\psi \in \Psi^{p,k} (M): \varphi _t^* (\psi) = \psi~~ \mathrm{for~~ every}~~ t \in \bf R\}.$$
The differential complex $\Psi^{p,*} (M, \xi)$ is a closed subcomplex of $\Psi^{p,*}(M)$. Let 
$$\mathcal Z^{p,k} (M, \xi) := \Ker \bigl(d: \Psi^{p,k} (M, \xi) \to \Psi^{p,k+1} (M, \xi)\bigr)$$
be the space of $k$-cocycles.
\end{itemize}

\subsection{Poincar\'e duality} 
\label{ss - Poincaré duality}

Poincar\'e duality for de Rham $L^p$-cohomology takes the following form.

\begin{proposition}
\label{dR-Pd} 
Let $M$ be a complete oriented Riemannian manifold of dimension $D$.
Let $p \in (1, +\infty)$, $q = p/(p-1)$ be its H\"older conjugate, and $k \in \{0,\dots, D\}$. Then 
\begin{enumerate}
\item $L^p {\rm H}_{\mathrm{dR}}^k (M)$ is Hausdorff if and only if $L^q {\rm H}_{\mathrm{dR}}^{D-k +1} (M)$ is.
\item $L^p \overline{{\rm H}^k _{\mathrm{dR}}} (M)$ and $L^q \overline{{\rm H}^{D-k}_{\mathrm{dR}}} (M)$ are dual Banach spaces, via the perfect pairing 
$L^p \overline{{\rm H}^k _{\mathrm{dR}}} (M) \times L^q \overline{{\rm H}^{D-k}_{\mathrm{dR}}} (M) \to \mathbf R$, defined by  
$$([\omega _1], [\omega _2]) \mapsto \int _M \omega _1 \wedge \omega _2.$$
\end{enumerate}
\end{proposition}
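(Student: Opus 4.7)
My plan deduces both assertions simultaneously from two ingredients: the density of smooth compactly supported forms in $\Omega^{p,k}(M)$ when $M$ is complete, and the Banach closed-range theorem applied to $d$ viewed as an unbounded, densely defined operator between Lebesgue spaces of forms. The density step is where completeness is genuinely used: one constructs cut-offs $\chi_n\in\Omega_c^0(M)$ from a distance function to a basepoint, normalized so that $|d\chi_n|$ is uniformly bounded and $\chi_n\to 1$ pointwise, and checks that $\chi_n\alpha\to\alpha$ in the norm $\|\cdot\|_{L^p\Omega^k}+\|d\cdot\|_{L^p\Omega^{k+1}}$ for every $\alpha\in\Omega^{p,k}(M)$.

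Next I identify the Banach-space adjoint of the unbounded closed operator $d_{p,k-1}\colon \Omega^{p,k-1}(M)\subset L^p\Omega^{k-1}(M)\to L^p\Omega^k(M)$. Using the Lebesgue duality $(L^p\Omega^j(M))^*\cong L^q\Omega^{D-j}(M)$ realized by the wedge pairing, and combining the density lemma with Stokes' formula on $\Omega_c^{k-1}(M)$ (which yields $\int d\alpha\wedge\omega_2=(-1)^{k}\int\alpha\wedge d\omega_2$), one checks that $\omega_2\in\mathrm{Dom}(d_{p,k-1}^*)$ iff the current $d\omega_2$ is representable by an $L^q$-form, i.e.\ iff $\omega_2\in\Omega^{q,D-k}(M)$; this identifies $d_{p,k-1}^*$ with $\pm d_{q,D-k}\colon \Omega^{q,D-k}(M)\to L^q\Omega^{D-k+1}(M)$. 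The Banach closed-range theorem then gives assertion (1), since $L^p\mathrm{H}^k_{\mathrm{dR}}(M)$ (resp.\ $L^q\mathrm{H}^{D-k+1}_{\mathrm{dR}}(M)$) is Hausdorff exactly when the image of $d_{p,k-1}$ (resp.\ of $d_{q,D-k}$) is closed in its target. It also supplies, with no closedness assumption, the identity $\overline{d(\Omega^{p,k-1}(M))}^{L^p\Omega^k}=Z_{q,D-k}^\perp$, where $Z_{q,D-k}:=\ker d_{q,D-k}$ and the annihilator is taken in $L^p\Omega^k(M)$ via the wedge pairing; symmetrically, $\overline{d(\Omega^{q,D-k-1}(M))}^{L^q\Omega^{D-k}}=Z_{p,k}^\perp$.

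For assertion (2), the wedge pairing is bounded on $L^p\Omega^k\times L^q\Omega^{D-k}$ by H\"older, and it descends to a bounded bilinear form on $L^p\overline{\mathrm{H}^k_{\mathrm{dR}}}(M)\times L^q\overline{\mathrm{H}^{D-k}_{\mathrm{dR}}}(M)$ thanks to the integration-by-parts identity above. The preceding step also rewrites these reduced cohomologies as $Z_{p,k}/Z_{q,D-k}^\perp$ and $Z_{q,D-k}/Z_{p,k}^\perp$ respectively. Applying the standard quotient-of-subspace duality lemma (for $F\subset E\subset X$ closed, $(E/F)^*\cong F^\perp/E^\perp$) together with the bipolar theorem $(Z_{q,D-k}^\perp)^\perp=Z_{q,D-k}$ (valid since $Z_{q,D-k}$ is closed in $L^q\Omega^{D-k}(M)$), one obtains a canonical isometric identification $(L^p\overline{\mathrm{H}^k_{\mathrm{dR}}}(M))^*=Z_{q,D-k}/Z_{p,k}^\perp=L^q\overline{\mathrm{H}^{D-k}_{\mathrm{dR}}}(M)$ realized by the wedge pairing; reflexivity of $L^p\Omega^k(M)$ for $p\in(1,+\infty)$ is inherited by closed subspaces and quotients, so the duality is perfect in both directions.

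The main obstacle is the density lemma on complete manifolds: one must verify both $\chi_n\alpha\to\alpha$ and $d(\chi_n\alpha)=\chi_n\,d\alpha+d\chi_n\wedge\alpha\to d\alpha$ in $L^p$, the latter requiring the uniform bound on $|d\chi_n|$ and the fact that $d\chi_n$ is supported outside a growing compact set. Once this is in place, all remaining steps reduce to a mechanical application of standard functional-analytic tools (closed range, Hahn--Banach, bipolar theorem).
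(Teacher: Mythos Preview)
Your proof is correct and follows the standard functional-analytic route: density of $\Omega_c^*(M)$ in $\Omega^{p,*}(M)$ on complete manifolds, identification of the Banach adjoint of $d_{p,k-1}$ with $\pm d_{q,D-k}$ via the wedge pairing, then the closed-range theorem for (1) and the subspace-quotient duality $(E/F)^*\cong F^\perp/E^\perp$ together with the bipolar theorem for (2). The paper itself does not give an independent argument; it simply refers to \cite[Corollaire 14]{P1} and \cite{GT10}, and what you have written is essentially the proof of the latter reference (your density lemma is exactly their Lemma 4, which the present paper also invokes elsewhere). So your approach is the same as that of the cited sources, merely unpacked in detail.
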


\begin{proof} 
See \cite[Corollaire 14]{P1} or \cite{GT10}.
\end{proof}

The following terminology will be useful in the sequel.

\begin{definition}
\label{dR-definition}
Let $p,q \in (1, +\infty)$ and $k, \ell \in \{0, \dots,D\}$. The couples $(p,k)$ and $(q, \ell)$ are said to be \emph{Poincar\'e dual} if $p$ and $q$ are H\"older conjugate and if $\ell = D-k$.
\end{definition}

\section{Flows and $L^p$-cohomology}
\label{s - flows}

This section exploits some dynamical properties of flows acting on forms to extract information on $L^p$-cohomology.
The objects appearing in this section are defined in Section \ref{sec-prelim}.
In what follows, we keep $M$ a $C^\infty$ orientable manifold (without boundary) endowed with a Riemannian metric.

\subsection{Invariance, identification and vanishing}
 We review several results due to Pansu, see \cite[Proposition 10]{P1} or \cite[Section 1]{BR2}. 
 
Let $\xi$ be a $C^\infty$ unit complete vector field on $M$, and denote by $(\varphi _t)_{t \in{\mathbf R}}$ its flow. 
We assume that $\varphi _t^*: L^p \Omega^k (M) \to L^p \Omega^k (M)$ is bounded for all $t \in \bf R$, $p\in (1, +\infty)$ and $k \in \bf N$. This happens {\it e.g.\!} when $M$ is a manifold of {\it bounded geometry, i.e.\!} a manifold whose injectivity radius is bounded from below and whose sectional curvatures are bounded from above and from below.

\begin{proposition}
\label{dR-cohomologous}
Let $p \in (1, +\infty)$ and let $k \in {\bf N}$.
Then for every $\omega \in \Omega ^{p, k}(M) \cap \Ker d$ and $t \in \mathbf R$, the forms $\omega $ and $\varphi ^* _t \omega$ are cohomologous
in $L^p {\rm H}_{\mathrm{dR}}^k (M)$.
\end{proposition}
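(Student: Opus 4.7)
The plan is to apply Cartan's homotopy formula $\mathcal{L}_\xi = d\iota_\xi + \iota_\xi d$ along the flow, then verify the $L^p$-bounds needed to realize the resulting primitive in $\Omega^{p,k-1}(M)$.

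For a $C^\infty$ form $\omega$ satisfying $d\omega = 0$, the standard identity $\frac{d}{ds}\varphi_s^*\omega = \varphi_s^*(\mathcal{L}_\xi\omega)$ combined with Cartan's formula and $d\omega = 0$ gives
\[
\frac{d}{ds}\varphi_s^*\omega \;=\; \varphi_s^*\bigl(d\iota_\xi\omega\bigr) \;=\; d\bigl(\varphi_s^*\iota_\xi\omega\bigr),
\]
since $d$ commutes with pullback. Integrating over $[0,t]$ yields
\[
\varphi_t^*\omega - \omega \;=\; d\eta_t, \qquad \text{where} \qquad \eta_t := \int_0^t \varphi_s^*\iota_\xi\omega\, ds.
\]

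Next, I would check that $\eta_t \in L^p\Omega^{k-1}(M)$. Since $|\xi|\equiv 1$, one has the pointwise inequality $|\iota_\xi\omega|_m \le |\omega|_m$, so $\|\iota_\xi\omega\|_{L^p\Omega^{k-1}} \le \|\omega\|_{L^p\Omega^k}$. The hypothesis that each $\varphi_s^*$ is bounded on $L^p\Omega^{k-1}$, together with the cocycle relation $\varphi_{s+s'}^* = \varphi_{s'}^* \varphi_s^*$ and the uniform boundedness principle, implies that $C := \sup_{s\in[0,t]}\|\varphi_s^*\|_{L^p \to L^p}$ is finite. Minkowski's integral inequality then gives
\[
\|\eta_t\|_{L^p\Omega^{k-1}} \;\le\; \int_0^t \|\varphi_s^*\iota_\xi\omega\|_{L^p\Omega^{k-1}}\, ds \;\le\; C\,|t|\,\|\omega\|_{L^p\Omega^k} \;<\; \infty.
\]
Combined with $d\eta_t = \varphi_t^*\omega - \omega \in L^p\Omega^k(M)$, this shows $\eta_t \in \Omega^{p,k-1}(M)$, so $[\omega] = [\varphi_t^*\omega]$ in $L^p\mathrm{H}^k_{\mathrm{dR}}(M)$ when $\omega$ is smooth.

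Finally, I would extend to general $\omega \in \Omega^{p,k}(M)\cap \Ker d$. The cleanest route is to interpret the previous computation in the sense of currents: for any $\omega\in\Omega^{p,k}(M)$ viewed as the current $T_\omega$, Cartan's formula holds by duality (using $\tilde{\iota_\xi} = \pm\iota_\xi$, cf.\ Section~\ref{current-sec}), and when $dT_\omega = 0$ one obtains $\varphi_t^* T_\omega - T_\omega = d\int_0^t \varphi_s^* T_{\iota_\xi\omega}\,ds$. The Bochner integral $\eta_t = \int_0^t \varphi_s^* \iota_\xi\omega\, ds$ is well defined in $L^p\Omega^{k-1}(M)$ by the estimate above (which uses only $\omega\in L^p\Omega^k$, not smoothness), and a routine Fubini argument against test forms $\alpha\in\Omega_c^{D-k+1}(M)$ identifies $T_{\eta_t}$ with this current integral and confirms $dT_{\eta_t} = T_{\varphi_t^*\omega - \omega}$. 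The main obstacle in the whole argument is precisely this last step: justifying that $d$ commutes with the integral at the level of currents, and that the primitive $\eta_t$ one produces lies in $L^p$ rather than only in a weaker distributional space. Once these technicalities are handled, the conclusion $[\omega] = [\varphi_t^*\omega]$ in $L^p\mathrm{H}^k_{\mathrm{dR}}(M)$ follows.
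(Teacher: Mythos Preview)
Your proof is correct and follows the standard Cartan homotopy argument that the paper simply cites from \cite[Lemma~1.3]{BR2}. One minor point: the cocycle relation together with Banach--Steinhaus does not by itself give $\sup_{s\in[0,t]}\|\varphi_s^*\|<\infty$ --- you first need strong continuity of $s\mapsto\varphi_s^*\omega$ in $L^p\Omega^{k-1}$ (which holds here by smoothness of the flow and density of $\Omega_c^{k-1}(M)$), so that each orbit over $[0,t]$ is compact, hence bounded, and then the uniform boundedness principle applies.
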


\begin{proof}
See {\it e.g.\!} \cite[Lemma 1.3]{BR2}.
\end{proof}

\begin{proposition}
\label{dR-identification} 
Let $p \in (1, +\infty)$ and $k \in {\bf N}^*$. 
Suppose that there exist $C, \eta >0$ such that for every $t \geqslant 0$, one has 
$$\Vert \varphi _t^* \Vert _{L^p \Omega^{k-1} \to L^p \Omega^{k-1}}
\leqslant C e^{-\eta t}.$$
Then: 
\begin{enumerate}
\item Let $\omega \in \Omega ^{p, k}(M) \cap \Ker d$. When $t \to +\infty$, the form $\varphi ^* _t \omega$ converges in the Banach space $\Psi ^{p,k} (M)$ (and so in the sense of currents); its limit $\omega _\infty$ is a closed current in $\mathcal Z ^{p,k} (M, \xi)$.

\item The map $\omega \mapsto \omega _\infty$ induces a canonical Banach isomorphism 
$$L^p \mathrm{H}_{\mathrm{dR}}^{k} (M) \simeq \mathcal Z^{p,k} (M, \xi).$$
In particular $L^p \mathrm{H}_{\mathrm{dR}}^{k} (M)$ is Hausdorff.
\end{enumerate}
\end{proposition}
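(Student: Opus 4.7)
The plan is to apply the Cartan homotopy formula $\mathcal{L}_\xi = d\iota_\xi + \iota_\xi d$ combined with $d\omega = 0$ to rewrite
$$\varphi_t^*\omega - \omega \;=\; \int_0^t \frac{d}{ds}\varphi_s^*\omega\,ds \;=\; \int_0^t d\bigl(\varphi_s^*\iota_\xi\omega\bigr)\,ds \;=\; d\!\int_0^t \varphi_s^*\iota_\xi\omega\,ds.$$
Since $|\iota_\xi\omega|\leqslant|\omega|$ (because $\xi$ is unit) and $\|\varphi_s^*\|_{L^p\Omega^{k-1}\to L^p\Omega^{k-1}}\leqslant Ce^{-\eta s}$, the integrand on the right decays exponentially in $L^p\Omega^{k-1}$, so $\int_0^\infty \varphi_s^*\iota_\xi\omega\,ds$ converges absolutely to some $\gamma\in L^p\Omega^{k-1}$ with $\|\gamma\|_{L^p\Omega^{k-1}}\leqslant (C/\eta)\|\omega\|_{L^p\Omega^k}$. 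Setting $\omega_\infty := \omega + d\gamma$, the convergence $\varphi_t^*\omega\to\omega_\infty$ takes place in $\Psi^{p,k}(M)$ directly from the definition of the $\Psi^{p,k}$-norm, and $d\omega_\infty = d\omega + d^2\gamma = 0$ as a current. To obtain flow invariance, I will first note that each $\varphi_s^*$ is bounded on $\Psi^{p,k}$: decomposing $\psi=\beta+d\eta$ one has $\varphi_s^*\psi = \varphi_s^*\beta + d\varphi_s^*\eta$, and the hypothesis gives boundedness on both $L^p\Omega^k$ and $L^p\Omega^{k-1}$. Continuity of $\varphi_s^*$ on $\Psi^{p,k}$ then yields $\varphi_s^*\omega_\infty = \lim_t \varphi_{s+t}^*\omega = \omega_\infty$, which settles (1).

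For (2), the map $\omega\mapsto \omega_\infty$ is clearly continuous from $\Omega^{p,k}(M)\cap\Ker d$ to $\mathcal Z^{p,k}(M,\xi)$. It descends to the quotient: if $\omega = d\eta$ with $\eta\in\Omega^{p,k-1}(M)$, then $\varphi_t^*\omega = d\varphi_t^*\eta$ has $\Psi^{p,k}$-norm at most $\|\varphi_t^*\eta\|_{L^p\Omega^{k-1}}\to 0$, so $\omega_\infty=0$. Conversely, if $\omega_\infty=0$ then $\omega = -d\gamma$ in the sense of currents with $\gamma\in L^p\Omega^{k-1}$; as $\omega\in L^p\Omega^k$ this precisely says $\gamma\in\Omega^{p,k-1}(M)$, whence $[\omega]=0$ in $L^p\mathrm{H}^k_{\mathrm{dR}}(M)$. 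This gives well-definedness and injectivity. Surjectivity is the nicest point: given $\psi=\beta+d\eta\in\mathcal Z^{p,k}(M,\xi)$, the identity $d\psi=0$ forces $d\beta=0$ as a current, so $\beta\in\Omega^{p,k}(M)\cap\Ker d$; meanwhile $\varphi_t^*\psi=\psi$ together with $\varphi_t^*\eta\to 0$ in $L^p\Omega^{k-1}$ yields $\varphi_t^*\beta = \beta + d(\eta-\varphi_t^*\eta)\to\beta+d\eta=\psi$ in $\Psi^{p,k}$, i.e.\ $\beta_\infty=\psi$. Finally, $\mathcal Z^{p,k}(M,\xi)$ is closed in the Banach space $\Psi^{p,k}(M)$ (as the kernel of $d$ inside the already closed subcomplex $\Psi^{p,k}(M,\xi)$); the open mapping theorem applied to the continuous surjection $\Omega^{p,k}(M)\cap\Ker d \twoheadrightarrow \mathcal Z^{p,k}(M,\xi)$ will simultaneously deliver closedness of its kernel $d\,\Omega^{p,k-1}(M)$ (hence Hausdorffness of $L^p\mathrm{H}^k_{\mathrm{dR}}(M)$) and the Banach isomorphism on the quotient.

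The main technical nuisance I anticipate is justifying the commutations $d\int_0^t = \int_0^t d$ and the Cartan chain of equalities for an arbitrary $\omega\in\Omega^{p,k}(M)$ (which need not be smooth), and similarly transporting the formal identities involving pullback and inner product from smooth forms to $L^p$ forms and to currents. I plan to handle this by density of smooth forms in $\Omega^{p,k}(M)$, or more cleanly by testing against compactly supported smooth $(D-k)$-forms and reducing every step to Stokes' theorem and to the continuity of $\varphi_s^*$ in the sense of currents. Once these manipulations are validated, the remainder of the argument is a straightforward combination of Proposition \ref{dR-cohomologous}, the contraction hypothesis, and the open mapping theorem.
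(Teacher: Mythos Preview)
Your argument is correct and follows essentially the same route as the references the paper defers to (Pansu \cite[Proposition~10]{P1} and \cite[Proposition~1.9]{BR2}): the Cartan homotopy identity combined with the exponential contraction on $L^p\Omega^{k-1}$ produces the primitive $\gamma=\int_0^\infty\varphi_s^*\iota_\xi\omega\,ds$, and the remaining steps (well-definedness, injectivity, surjectivity via a chosen decomposition $\psi=\beta+d\eta$) are the standard ones. One small expository slip: closedness of the kernel $d\,\Omega^{p,k-1}(M)$ follows simply from the continuity of $\omega\mapsto\omega_\infty$ (kernel of a bounded linear map), not from the open mapping theorem; the open mapping theorem is then what upgrades the resulting continuous bijection $L^p\mathrm{H}^k_{\mathrm{dR}}(M)\to\mathcal Z^{p,k}(M,\xi)$ to a Banach isomorphism.
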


\begin{proof} 
The statement is essentially contained in \cite[Proposition 10]{P1}. 
A proof also appears in \cite[Proposition 1.9]{BR2} under the stronger assumption that $\max _{i=k-2, k-1} \Vert \varphi _t ^* \Vert _{L^p \Omega ^i \to L^p \Omega ^i} \leqslant C e^{-\eta t}$. 
The extra assumption served only in parts (3) and (4) of the proof, to show that $\displaystyle \lim _{t \to +\infty} \Vert \varphi _t ^* (d\theta) \Vert _{\Psi ^{p,k}}= 0$ for every $\theta \in L^p \Omega^{k-1}(M)$. 
But the weaker hypothesis $\Vert \varphi _t^* \Vert _{L^p \Omega^{k-1} \to L^p \Omega^{k-1}} \leqslant C e^{-\eta t}$ is enough to prove this property.
Indeed, by combining the definition of $\Vert \cdot \Vert_{\Psi ^{p,k}}$ with this assumption, one has  
$$\Vert \varphi _t ^* (d\theta) \Vert _{\Psi ^{p,k}} =  \Vert d (\varphi _t ^* \theta) \Vert _{\Psi ^{p,k}} \leqslant \Vert \varphi _t ^* \theta \Vert _{L^p \Omega ^{k-1}} \to 0$$
when $t \to \infty$.
 \end{proof}

\begin{corollary}
\label{dR-vanishing}
Let $p \in (1, +\infty)$ and $k \in {\bf N}^*$. 
Suppose that there exist $C, \eta >0$ such that for every $t \geqslant 0$, one has 
$$\max _{i= k-1, k} \Vert \varphi _t ^* \Vert _{L^p \Omega ^{i} \to L^p \Omega ^{i}} 
\leqslant C e^{-\eta t}.$$
Then $L^p \mathrm{H}^{k} _{\mathrm{dR}}(M)= \{0\}$.
\end{corollary}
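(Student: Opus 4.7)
The plan is to combine Proposition \ref{dR-identification} with the extra decay hypothesis in degree $k$. Since the hypothesis implies in particular the assumption of Proposition \ref{dR-identification} (bound in degree $k-1$), we have the Banach isomorphism
$$L^p \mathrm{H}_{\mathrm{dR}}^{k}(M) \;\simeq\; \mathcal{Z}^{p,k}(M,\xi),$$
where the class of a closed form $\omega \in \Omega^{p,k}(M) \cap \Ker d$ is sent to the current $\omega_\infty = \lim_{t \to +\infty} \varphi_t^* \omega$, the limit being taken in the Banach space $\Psi^{p,k}(M)$. So it suffices to show that $\omega_\infty = 0$ for every such $\omega$.

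First, I would use the extra bound in degree $k$: for every $\omega \in \Omega^{p,k}(M) \cap \Ker d$ and every $t \geqslant 0$,
$$\Vert \varphi_t^* \omega \Vert_{L^p \Omega^k} \;\leqslant\; C e^{-\eta t} \Vert \omega \Vert_{L^p \Omega^k},$$
so that $\varphi_t^* \omega \to 0$ in $L^p \Omega^k(M)$ when $t \to +\infty$. Next, recall from Section \ref{sec-prelim} that the inclusion $L^p \Omega^k(M) \hookrightarrow \Psi^{p,k}(M)$ is continuous (take $\gamma = 0$ in the definition of $\Vert \cdot \Vert_{\Psi^{p,k}}$). Therefore $\varphi_t^* \omega \to 0$ also in $\Psi^{p,k}(M)$.

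Finally, both $\Psi^{p,k}(M)$ and the current topology on $\mathcal{D}'^k(M)$ are Hausdorff, and the two limits of $\varphi_t^* \omega$ that appear in Proposition \ref{dR-identification}(1) must agree. Thus $\omega_\infty = 0$, which shows that the isomorphism in Proposition \ref{dR-identification}(2) sends every class to $0$. Hence $L^p \mathrm{H}_{\mathrm{dR}}^{k}(M) = \{0\}$.

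There is really no technical obstacle here: the corollary is essentially a one-line consequence of Proposition \ref{dR-identification} together with the continuity of the inclusion $L^p \Omega^k \hookrightarrow \Psi^{p,k}$. The only point that deserves a sentence of care is to spell out why the $L^p$-limit and the $\Psi^{p,k}$-limit of $\varphi_t^* \omega$ coincide, which follows from the continuity of that inclusion.
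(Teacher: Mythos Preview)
Your proof is correct and essentially follows the same idea as the paper's, with a slight difference in packaging. The paper argues directly that $\mathcal Z^{p,k}(M,\xi)=\{0\}$: the hypothesis in degrees $k-1$ and $k$ gives $\Vert \varphi_t^*\Vert_{\Psi^{p,k}\to\Psi^{p,k}}\leqslant Ce^{-\eta t}$, and since every $\psi\in\mathcal Z^{p,k}(M,\xi)$ is $\varphi_t$-invariant one gets $\Vert\psi\Vert_{\Psi^{p,k}}\leqslant Ce^{-\eta t}\Vert\psi\Vert_{\Psi^{p,k}}\to 0$, hence $\psi=0$. You instead track a closed $\omega$ and show $\varphi_t^*\omega\to 0$ in $L^p\Omega^k\hookrightarrow\Psi^{p,k}$, forcing $\omega_\infty=0$. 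Both routes are valid and equally short; the paper's version has the minor advantage of working directly with arbitrary elements of $\mathcal Z^{p,k}(M,\xi)$ rather than those coming from a specific $\omega$.
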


\begin{proof} 
Our assumption implies that
$\Vert \varphi _t ^* \Vert _{\Psi^{p, k} \to \Psi^{p, k}} \leqslant C e^{-\eta t}$; and also that $L^p \mathrm{H}^{k} _{\mathrm{dR}}(M) \simeq \mathcal Z^{p,k}(M, \xi)$ by Proposition \ref{dR-identification}. Since the elements of $\mathcal Z^{p,k}(M, \xi)$ are $\varphi _t$-invariant, one gets that $\mathcal Z^{p,k}(M, \xi) = \{0\}$. 
Therefore $L^p \mathrm{H}^{k} _{\mathrm{dR}}(M)= \{0\}$.
\end{proof}

\subsection{Boundary values, Poincar\'e duality revisited}
\label{ss - bv}

In this section, the oriented Riemanniann manifold $M$ is supposed to be complete. We assume futhermore that $M$ and the unit vector field $\xi$ are such that the pair $(M, \xi)$ is $C^\infty$-diffeomorphic to a pair of the form $(\mathbf R \times N, \frac{\partial}{\partial t})$, where the vector field $\frac{\partial}{\partial t}$ is carried by the $\mathbf R$-factor. 

We think of $N$ as a ``boundary'' of $M$. Under some dynanical assumptions, we will represent the spaces $L^p \mathrm{H}_{\mathrm{dR}}^{k} (M)$ and the Poincar\'e duality on the boundary $N$ (see Proposition \ref{boundary-prop} and Theorem \ref{boundary-thm} below).

Let $\pi : M \to N$ be the projection map and $\pi^*: \mathcal D'^i(N) \to \mathcal D'^i(M)$ be the continuous extension of the pull-back map $\pi ^* : \Omega ^i(N) \to \Omega ^i (M)$.
We set $n = :\dim N$ so that $D := \dim M = n+1$.

\begin{proposition}
\label{boundary-prop} 
Let $p \in (1, +\infty)$ and $k \in \mathbf N ^*$. 
Suppose that there exist $C, \eta >0$ such that for $t \geqslant 0$:
$$\Vert \varphi _t ^* \Vert _{L^p \Omega ^{k-1} \to L^p \Omega ^{k-1}} 
\leqslant C e^{-\eta t}.$$
Then for every $\psi \in \mathcal Z ^{p,k} (M, \xi)$, there exists $T \in \mathcal D '^{k}(N) \cap \Ker d$ such that $\psi = \pi ^* (T)$.
\end{proposition}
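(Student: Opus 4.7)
The hypothesis of Proposition \ref{boundary-prop} coincides with that of Proposition \ref{dR-identification}, so by part (2) of the latter every $\psi \in \mathcal Z^{p,k}(M,\xi)$ can be realized as
\[
\psi \;=\; \lim_{t \to +\infty} \varphi_t^*\omega
\]
in $\Psi^{p,k}(M)$ for some smooth closed form $\omega \in \Omega^{p,k}(M) \cap \ker d$. Since the inclusion $\Psi^{p,k}(M) \hookrightarrow \mathcal D'^k(M)$ is continuous, the convergence also holds in the sense of currents.

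The first main step is to show $\iota_\xi \psi = 0$. The operator $\iota_\xi$ extends weak-$*$ continuously to currents (Section \ref{current-sec}) and commutes with $\varphi_t^*$ because $\xi$ is invariant under its own flow. Passing to the limit gives
\[
\iota_\xi \psi \;=\; \lim_{t \to +\infty}\iota_\xi \varphi_t^*\omega \;=\; \lim_{t \to +\infty}\varphi_t^*(\iota_\xi \omega)
\]
in $\mathcal D'^{k-1}(M)$. Since $\xi$ is a unit vector field, $\iota_\xi \omega \in L^p\Omega^{k-1}(M)$ with $\Vert \iota_\xi\omega \Vert_{L^p\Omega^{k-1}} \leqslant \Vert \omega \Vert_{L^p\Omega^k}$, and the contraction hypothesis yields $\Vert \varphi_t^*(\iota_\xi\omega)\Vert_{L^p\Omega^{k-1}} \leqslant C e^{-\eta t}\Vert\omega\Vert_{L^p\Omega^k}$. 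Hence $\varphi_t^*(\iota_\xi\omega) \to 0$ in $L^p\Omega^{k-1}(M)$, in particular in $\mathcal D'^{k-1}(M)$, and thus $\iota_\xi\psi = 0$.

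The second step uses the product structure $(M,\xi) \cong (\mathbf R \times N, \partial/\partial t)$ to convert the two properties ``$\psi$ is $\varphi_t$-invariant'' and ``$\iota_\xi\psi = 0$'' into the descent $\psi = \pi^*T$. Testing $\psi$ against compactly supported forms of the two product types $\phi(t)\nu(x)$ (no $dt$ factor) and $\phi(t)\,dt \wedge \nu(x)$, the vanishing $\iota_\xi\psi=0$ shows that $\psi$ picks up only contributions of the first type, while the $\varphi_t$-invariance shows that such contributions depend on $\phi$ only through $\int_{\mathbf R}\phi(t)\,dt$. Choosing $\phi$ with integral $1$ and letting the support of $\phi$ shrink then defines a current $T \in \mathcal D'^k(N)$ with $T(\nu) = \psi(\phi\,\nu)$, which by construction satisfies $\psi = \pi^*T$. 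Finally, $\pi^*(dT) = d\psi = 0$, and the injectivity of $\pi^*$ on currents (which holds because $\pi$ is a surjective submersion with connected fibers) forces $dT = 0$.

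The main obstacle I foresee is the descent step: making rigorous the slicing that extracts $T$ on $N$ from a $\varphi_t$-invariant current $\psi$ on $\mathbf R \times N$ with $\iota_\xi\psi=0$. On smooth forms this is immediate (in coordinates $(t,x)$ such a form reads $A(x)$, with no $dt$-component and no $t$-dependence), and the extension to currents rests on the standard structure theorem for $\mathbf R$-translation-invariant distributions, applied slot by slot to the coefficients of $\psi$ in local frames.
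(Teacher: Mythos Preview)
Your proof is correct and follows essentially the same approach as the paper: both arguments use Proposition \ref{dR-identification} to write $\psi = \lim_{t\to+\infty}\varphi_t^*\omega$, then show $\iota_\xi\psi=0$ by passing the interior product through the limit and invoking the contraction hypothesis on $L^p\Omega^{k-1}$. The only difference is that the paper treats the descent step --- that a $\varphi_t$-invariant current with $\iota_\xi\psi=0$ on $\mathbf R\times N$ is a pull-back from $N$ --- as a one-line equivalence, whereas you unpack it via testing against product-type forms; your caution there is not misplaced, but the paper regards this as standard.
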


\begin{proof}
Recall that $\xi = \frac{\partial}{\partial t}$ and that the flow of $\xi$ is denoted by $\varphi _t$. 
Every $\psi \in \mathcal Z ^{p,k} (M, \xi)$ is $\varphi_t$-invariant; therefore showing that $\psi = \pi ^* (T)$ is equivalent to proving that $\iota _\xi \psi = 0$. From  
Proposition \ref{dR-identification}, there exists $\omega \in \Omega ^{p, k}(M) \cap \Ker d$ such that $\psi = \lim _{t \to +\infty} \varphi ^* _t (\omega)$
in the sense of currents. 
Since the map $\iota _\xi : \mathcal D '^{k}(M) \to \mathcal D '^{k-1}(M)$ is continuous, one obtains that  
$\iota _\xi \psi = \lim _{t \to +\infty} \varphi ^* _t (\iota _\xi \omega)$ in the sense of currents. But $\iota _\xi \omega \in L^p \Omega ^{k-1}(M)$, and by assumption one has
$\Vert \varphi _t ^* \Vert _{L^p \Omega ^{k-1} \to L^p \Omega ^{k-1}} \to 0$ when $t \to +\infty$. Thus $\iota _\xi \psi = 0$.

Lastly, since $d\psi =0$, one gets that $\pi ^* (dT) =0$, which in turn implies that  $dT=0$. Thus $T \in \mathcal D '^{k}(N) \cap \Ker d$, as expected.
\end{proof}

Let $\chi$ be a non-negative $C^\infty$ function on $M$, depending only on the $\mathbf R$-variable, such that $\chi (t) =0$ for $t \leqslant 0$ and $\chi (t) =1$ for $t \geqslant 1$.

\begin{theorem} 
\label{boundary-thm} 
Let $p, q \in (1, +\infty)$ and $k, \ell \in \{1, \dots, n\}$ be such that $(p,k)$ and $(q, \ell)$ are Poincar\'e dual --- see Definition \ref{dR-definition}. Suppose that there exist $C, \eta >0$ such that for $t \geqslant 0$:
\begin{enumerate}
\item $\Vert \varphi _t ^* \Vert _{L^p \Omega ^{k-1} \to L^p \Omega ^{k-1}} \leqslant C e^{-\eta t}$,
\item $\Vert \varphi _{-t} ^* \Vert _{\Ker \iota _\xi \cap L^q \Omega^{\ell} \to \Ker \iota _\xi \cap L^q \Omega^{\ell}}  \leqslant C e^{-\eta t}$.
\end{enumerate}
Then for every $\theta  \in \Omega _c ^{\ell-1}(N)$, the form $d(\chi \cdot \pi ^* \theta)$ belongs to the space $\Omega ^{q,\ell}(M) \cap \Ker d$; and for every
$\omega \in \Omega ^{p, k}(M) \cap \Ker d$, one has:
$$\int _M \omega \wedge d(\chi \cdot \pi ^* \theta) = T(\theta),$$
where $T$ is the closed $k$-current on $N$ such that 
$$\omega_\infty = \lim _{t \to +\infty} \varphi _t ^* (\omega) = \pi ^* (T),$$ as in Propositions \ref{dR-identification} and \ref{boundary-prop}.
\end{theorem}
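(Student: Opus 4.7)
The approach has three parts: (i) prove $d(\chi\cdot\pi^*\theta)\in L^q\Omega^\ell(M)\cap\Ker d$ by a geometric series argument combining Hypothesis~(2) with the $\xi$-invariance of $\pi^*(d\theta)$; (ii) apply Stokes to a $t$-truncation of $\chi\cdot\pi^*\theta$, reducing the sought identity to the limit of three explicit integrals; (iii) identify the non-trivial limit with $T(\theta)$ via the Banach-space convergence $\varphi_T^*\omega\to\pi^*T$ in $\Psi^{p,k}(M)$ provided by Propositions~\ref{dR-identification} and \ref{boundary-prop}.

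For~(i), the expansion
$$d(\chi\cdot\pi^*\theta)=\chi'(t)\,dt\wedge\pi^*\theta+\chi\cdot\pi^*(d\theta)$$
has a compactly supported first summand, which lies in $L^q\Omega^\ell(M)$. For the second, set $\alpha:=\pi^*(d\theta)\in\Ker\iota_\xi$; it is $\xi$-invariant. Pick a compact neighborhood $K\subset N$ of $\mathrm{supp}(d\theta)$, and let $\beta:=\alpha\cdot\mathbf 1_{[0,1]\times K}\in L^q\Omega^\ell(M)\cap\Ker\iota_\xi$. The $\xi$-invariance of $\alpha$ gives $\alpha\cdot\mathbf 1_{[n,n+1]\times K}=\varphi_{-n}^*\beta$ for every integer $n\geqslant 0$, so Hypothesis~(2) yields $\|\alpha\cdot\mathbf 1_{[n,n+1]\times K}\|_{L^q\Omega^\ell}\leqslant Ce^{-\eta n}\|\beta\|_{L^q\Omega^\ell}$. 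Summing the geometric series produces $\chi\cdot\alpha\in L^q\Omega^\ell(M)$; closure under $d$ is $d^2=0$.

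For~(ii) and~(iii), fix a smooth cutoff $\rho$ with $\rho(t)=1$ for $t\leqslant 0$ and $\rho(t)=0$ for $t\geqslant 1$, set $\rho_T(t):=\rho(t-T)$, and let $T\geqslant 1$. The form $\chi\rho_T\cdot\pi^*\theta$ has compact support in $[0,T+1]\times K$, so Stokes together with $d\omega=0$ gives $\int_M\omega\wedge d(\chi\rho_T\pi^*\theta)=0$. Since $\chi\equiv 1$ on $\mathrm{supp}\,\rho_T'$ for $T\geqslant 1$, we expand $d(\chi\rho_T\pi^*\theta)=\chi'(t)\,dt\wedge\pi^*\theta+\rho_T'(t)\,dt\wedge\pi^*\theta+\chi\rho_T\cdot\pi^*(d\theta)$ and let $T\to+\infty$: the first piece is $T$-independent; the third tends to $\int_M\omega\wedge\chi\cdot\pi^*(d\theta)$ by H\"older and dominated convergence using~(i); and for the second, apply the change of variable $\int_M\omega\wedge\varphi_{-T}^*\phi=\int_M\varphi_T^*\omega\wedge\phi$ to the compactly supported test form $\phi:=-\rho'(t)\,dt\wedge\pi^*\theta$. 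By $\xi$-invariance of $\pi^*\theta$, $\varphi_{-T}^*\phi=-\rho_T'(t)\,dt\wedge\pi^*\theta$, hence
$$\int_M\omega\wedge\rho_T'(t)\,dt\wedge\pi^*\theta=-\int_M\varphi_T^*\omega\wedge\phi\longrightarrow-(\pi^*T)(\phi)=-T(\pi_*\phi)=-T(\theta)$$
as $T\to+\infty$; this limit uses Proposition~\ref{dR-identification} ($\varphi_T^*\omega\to\pi^*T$ in the Banach space $\Psi^{p,k}(M)$), Proposition~\ref{boundary-prop} ($\omega_\infty=\pi^*T$), and the continuity of the pairing of the current $\varphi_T^*\omega$ with the compactly supported smooth form $\phi$. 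Rearranging the three limits in the Stokes identity then gives $\int_M\omega\wedge d(\chi\pi^*\theta)=T(\theta)$.

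The main technical hinge is the reinterpretation of the ``boundary-at-infinity'' term $\int_M\omega\wedge\rho_T'(t)\,dt\wedge\pi^*\theta$ as the Banach-space pairing $\int_M\varphi_T^*\omega\wedge\phi$ with a compactly supported smooth test form $\phi$: this change of variable converts the abstract Banach convergence $\varphi_T^*\omega\to\pi^*T$ in $\Psi^{p,k}(M)$ into concrete convergence of the target integrals. Everything else---integrability in~(i), dominated convergence in~(ii), and the fiber-integration computation $\pi_*\phi=\theta$ at the end---is soft analysis.
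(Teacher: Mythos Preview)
Your proof is correct. The overall strategy matches the paper's---both establish the $L^q$-integrability of $d(\chi\cdot\pi^*\theta)$ by a geometric-series estimate using hypothesis~(2), and both ultimately identify the pairing with $T(\theta)$ by testing the convergence $\varphi_T^*\omega\to\pi^*T$ against a compactly supported form whose fiber integral is $\theta$---but the organization is genuinely different in a way worth recording.

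The paper proceeds by first invoking Poincar\'e duality (Proposition~\ref{dR-Pd}(2)) together with Proposition~\ref{dR-cohomologous} to replace $\omega$ by $\varphi_t^*\omega$ under the integral sign, then splits $d(\chi\cdot\pi^*\theta)=d\chi\wedge\pi^*\theta+\chi\cdot\pi^*d\theta$; the first (compactly supported) piece gives $T(\theta)$ as $t\to+\infty$, while the second piece is shown to vanish via a \emph{second} truncation $\chi_s$ and a uniformity estimate in $t$. Your argument avoids both the appeal to Poincar\'e duality and the nested double limit: you truncate once with $\rho_T$, apply the bare current identity $\int_M\omega\wedge d(\text{compactly supported})=0$ (just $d\omega=0$), and pass to a single limit $T\to+\infty$. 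The $\chi\cdot\pi^*d\theta$ piece is handled by ordinary dominated convergence, and the boundary term is converted by the change of variables $\int_M\omega\wedge\varphi_{-T}^*\phi=\int_M\varphi_T^*\omega\wedge\phi$ into a pairing against a \emph{fixed} compactly supported test form, so that convergence in $\Psi^{p,k}(M)\hookrightarrow\mathcal D'^k(M)$ applies directly. This is a cleaner packaging: it uses strictly less machinery (no Proposition~\ref{dR-Pd}, no Proposition~\ref{dR-cohomologous}) and replaces the paper's Step~3 uniformity argument by a one-line dominated convergence.
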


As a consequence of Theorem \ref{boundary-thm}, 
we will prove:

\begin{corollary}
\label{boundary-cor}Suppose that the assumptions of Theorem \ref{boundary-thm} are satisfied.
Then the classes of the $d(\chi \cdot \pi ^* \theta)$'s 
(where $\theta \in \Omega _c ^{\ell-1}(N)$) form a dense subspace in $L^q \overline{\mathrm H _{\mathrm{dR}} ^{\ell}} (M)$. 
Moreover when $\theta = d \alpha $ is an {\it exact} form, with $\alpha \in \Omega _c ^{\ell-2}(N)$, then $[d(\chi \cdot \pi ^* \theta)] =0$ in $L^q \overline{\mathrm H _{\mathrm{dR}} ^{\ell}} (M)$.
\end{corollary}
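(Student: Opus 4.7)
I would prove the two assertions separately, tackling the (simpler) vanishing for exact forms first and then obtaining density from Poincar\'e duality combined with Theorem~\ref{boundary-thm}.

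For the second assertion, suppose $\theta=d\alpha$ with $\alpha\in\Omega_c^{\ell-2}(N)$. A direct Leibniz computation gives
$$d(\chi\cdot\pi^*\theta)=d\chi\wedge d(\pi^*\alpha)=-d\bigl(d\chi\wedge\pi^*\alpha\bigr).$$
The primitive $d\chi\wedge\pi^*\alpha$ has \emph{compact} support in $M$, since $d\chi$ is supported in the slab $\{0\leqslant t\leqslant 1\}$ and $\alpha$ is compactly supported on $N$. Hence it lies in $\Omega^{q,\ell-1}(M)$ and exhibits $d(\chi\cdot\pi^*\theta)$ as exact in the complex $\Omega^{q,*}(M)$; its class is therefore already zero in the unreduced space $L^q\mathrm{H}^\ell_{\mathrm{dR}}(M)$.

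For the density statement I would use a Hahn--Banach argument together with Proposition~\ref{dR-Pd}(2), which identifies $L^q\overline{\mathrm{H}^\ell_{\mathrm{dR}}}(M)$ with the topological dual of $L^p\overline{\mathrm{H}^k_{\mathrm{dR}}}(M)$ via the integration pairing. It then suffices to show that any class $[\omega]\in L^p\overline{\mathrm{H}^k_{\mathrm{dR}}}(M)$ which pairs trivially with every $[d(\chi\cdot\pi^*\theta)]$, $\theta\in\Omega_c^{\ell-1}(N)$, must vanish. Choosing a representative $\omega\in\Omega^{p,k}(M)\cap\Ker d$ and applying the equality of Theorem~\ref{boundary-thm}, the pairing reads
$$\int_M\omega\wedge d(\chi\cdot\pi^*\theta)=T(\theta),$$
where $T\in\mathcal D'^k(N)\cap\Ker d$ is the boundary current of Proposition~\ref{boundary-prop}, characterised by $\omega_\infty=\pi^*T$. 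The hypothesis on $[\omega]$ forces $T(\theta)=0$ for every $\theta\in\Omega_c^{\ell-1}(N)$, hence $T=0$, hence $\omega_\infty=0$. But hypothesis (1) of Theorem~\ref{boundary-thm} activates Proposition~\ref{dR-identification}, under which $[\omega]\mapsto\omega_\infty$ is a Banach isomorphism $L^p\mathrm{H}^k_{\mathrm{dR}}(M)\simeq\mathcal Z^{p,k}(M,\xi)$; in particular $\omega_\infty=0$ implies $[\omega]=0$ in $L^p\overline{\mathrm{H}^k_{\mathrm{dR}}}(M)$.

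The only mild subtlety I expect to address is the application of Theorem~\ref{boundary-thm} to an arbitrary closed element of the completion $\Omega^{p,k}(M)$ rather than a smooth one. However, both sides of the identity depend continuously on $\omega$: the left-hand side by H\"older (as $d(\chi\cdot\pi^*\theta)\in L^q$), and the right-hand side via the continuity of $\omega\mapsto\omega_\infty$ into $\Psi^{p,k}(M)$ provided by Proposition~\ref{dR-identification}(1). So the identity extends by density from the smooth case, and the corollary is essentially a formal consequence of Theorem~\ref{boundary-thm} combined with Poincar\'e duality.
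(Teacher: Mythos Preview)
Your density argument is essentially identical to the paper's: both use Hahn--Banach via the Poincar\'e pairing of Proposition~\ref{dR-Pd}(2), invoke Theorem~\ref{boundary-thm} to rewrite the pairing as $T(\theta)$, and then conclude $T=0$ and hence $[\omega]=0$ through the isomorphism of Proposition~\ref{dR-identification}. (Your aside about extending Theorem~\ref{boundary-thm} to the completion is unnecessary here, since in the paper $\Omega^{p,k}(M)$ already denotes the completed space and the theorem is stated and proved at that level.)

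For the exact case you take a genuinely different and more elementary route. The paper argues indirectly: since $T$ is closed one has $T(d\alpha)=dT(\alpha)=0$ for every boundary current $T$, and then Poincar\'e duality forces the class to vanish in $L^q\overline{\mathrm H^\ell_{\mathrm{dR}}}(M)$. You instead observe directly that $d(\chi\cdot\pi^*d\alpha)=-d(d\chi\wedge\pi^*\alpha)$ with $d\chi\wedge\pi^*\alpha$ compactly supported, so the form is exact in $\Omega^{q,*}(M)$ and its class vanishes already in the \emph{unreduced} cohomology. Your argument is shorter, avoids the pairing machinery, and yields a slightly stronger conclusion; the paper's approach has the virtue of being a uniform application of the duality formula just established.
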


Recall that $L^p \overline{{\rm H}^{k} _{\mathrm{dR}}} (M)$ and $L^q \overline{{\rm H}^{\ell}_{\mathrm{dR}}} (M)$ are dual Banach spaces, via the pairing 
$([\omega _1], [\omega _2]) \mapsto \int _M \omega _1 \wedge \omega _2$ (see Proposition \ref{dR-Pd}).
In combination with Theorem \ref{boundary-thm} and Corollary \ref{boundary-cor} above, this yields immediately to the:

\begin{corollary}
\label{boundary-norm}
Suppose that the assumptions of Theorem \ref{boundary-thm} are satisfied. Let $\omega \in \Omega ^{p, k}(M) \cap \Ker d$ and let $T \in  \mathcal D '^{k}(N) \cap \Ker d$ be such that $\displaystyle \lim _{t \to +\infty} \varphi _t ^* (\omega) = \pi ^* (T)$. 
Then the norm of $[\omega ]$ in $L^p {\rm H}^{k} _{\mathrm{dR}} (M)$ satisfies:
$$\bigl\Vert [\omega ] \bigr\Vert_{L^p {\mathrm{H^k}}} = \sup \Bigl\{T(\theta) : \theta \in \Omega _c ^{\ell-1}(N),~ \bigl\Vert [d(\chi \cdot \pi ^* \theta) ] \bigr\Vert_{L^q \overline{\mathrm{H^\ell}}} \leqslant 1 \Bigr\}.$$
\end{corollary}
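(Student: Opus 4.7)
The plan is to combine the three pieces of information that have just been established: the Hausdorff property of $L^p\mathrm{H}^k_{\mathrm{dR}}(M)$, the Poincar\'e duality pairing, and the density statement of Corollary \ref{boundary-cor}. First, hypothesis (1) of Theorem \ref{boundary-thm} is exactly the assumption of Proposition \ref{dR-identification}; therefore $L^p \mathrm{H}^k_{\mathrm{dR}}(M)$ is Hausdorff and coincides with its reduced version $L^p \overline{\mathrm{H}^k_{\mathrm{dR}}}(M)$. In particular, the norm on the left-hand side of the claimed formula is the Banach space norm dual to that of $L^q \overline{\mathrm{H}^\ell_{\mathrm{dR}}}(M)$ via Proposition \ref{dR-Pd}, so that
$$\bigl\Vert [\omega] \bigr\Vert_{L^p \mathrm{H}^k} = \sup \Bigl\{ \int_M \omega \wedge \omega_2 : [\omega_2] \in L^q \overline{\mathrm{H}^\ell_{\mathrm{dR}}}(M),\ \bigl\Vert [\omega_2] \bigr\Vert_{L^q \overline{\mathrm{H}^\ell}} \leqslant 1 \Bigr\}.$$

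Next, I would specialize this supremum to the classes of the form $[d(\chi \cdot \pi^*\theta)]$ with $\theta \in \Omega_c^{\ell-1}(N)$. By Theorem \ref{boundary-thm}, each such form lies in $\Omega^{q,\ell}(M) \cap \Ker d$ (so its class in $L^q \overline{\mathrm{H}^\ell_{\mathrm{dR}}}(M)$ is well defined) and the integral pairing evaluates to
$$\int_M \omega \wedge d(\chi \cdot \pi^*\theta) = T(\theta).$$
This shows that the right-hand side of the claimed formula is at most $\bigl\Vert [\omega] \bigr\Vert_{L^p \mathrm{H}^k}$.

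For the reverse inequality, I would invoke Corollary \ref{boundary-cor}, which guarantees that the subspace $\{[d(\chi \cdot \pi^*\theta)] : \theta \in \Omega_c^{\ell-1}(N)\}$ is dense in $L^q \overline{\mathrm{H}^\ell_{\mathrm{dR}}}(M)$. Since the duality pairing $(u,v) \mapsto \int_M u \wedge v$ is bounded and hence continuous on $L^p \overline{\mathrm{H}^k_{\mathrm{dR}}}(M) \times L^q \overline{\mathrm{H}^\ell_{\mathrm{dR}}}(M)$, the supremum of this pairing (with fixed first argument $[\omega]$) over the unit ball of $L^q \overline{\mathrm{H}^\ell_{\mathrm{dR}}}(M)$ coincides with its supremum over the intersection of this unit ball with any dense linear subspace; rescaling if necessary, one may take the dense subspace of cohomology classes of the $d(\chi \cdot \pi^*\theta)$'s. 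Substituting the identification $\int_M \omega \wedge d(\chi \cdot \pi^*\theta) = T(\theta)$ inside the supremum yields precisely the stated formula. The only mildly delicate point, and the step I would be most careful about, is the density/continuity argument: one must check that the characterization of the dual norm as a supremum remains valid when restricted to a dense subspace, which is standard but relies on the continuity of the pairing granted by Proposition \ref{dR-Pd}.
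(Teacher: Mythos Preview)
Your proposal is correct and follows essentially the same route as the paper, which simply says the result follows immediately from Poincar\'e duality (Proposition \ref{dR-Pd}) combined with Theorem \ref{boundary-thm} and Corollary \ref{boundary-cor}. You have made explicit the one point the paper leaves implicit, namely that hypothesis (1) forces $L^p\mathrm{H}^k_{\mathrm{dR}}(M)$ to be Hausdorff via Proposition \ref{dR-identification}, so that the unreduced norm agrees with the reduced one and the duality of Proposition \ref{dR-Pd} applies directly.
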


\begin{proof}[Proof of Theorem \ref{boundary-thm}] 

{\it Step 1}. We first show that the $L^q$-norm of the form $d(\chi \cdot \pi ^* \theta)$ is finite. 
Set $\alpha := \pi ^* \theta$ for simplicity. One has 
$$d(\chi \cdot \alpha) = d\chi \wedge \alpha  + \chi \cdot d \alpha.$$
The form $d\alpha$ belongs to $\Ker \iota _\xi$ and is $\varphi _t$-invariant. 
With the assumption (2) we obtain (since $s \geqslant 0$):
\begin{align*}
\Vert \chi \cdot d\alpha \Vert _{L^q \Omega^{\ell}} 
&\leqslant \Vert \mathbf{1}_{t\geqslant 0}\cdot d\alpha \ \Vert _{L^q \Omega^{\ell}}\\
&= \sum _{i=0}^\infty  \Vert \mathbf{1}_{t\in [i, i+1]} \cdot d\alpha  \Vert _{L^q \Omega^{\ell}}\\
&\leqslant C \sum _{i=0}^\infty e^{-\eta i}\Vert \mathbf{1}_{t\in [0,1]} \cdot d\alpha \Vert _{L^q \Omega^{\ell}}\\
&= \frac{C}{1 - e^{-\eta}} \Vert \mathbf{1}_{t\in [0,1]} \cdot d\alpha \Vert _{L^q \Omega^{\ell}}.
\end{align*}
which is finite since $\mathbf{1}_{t\in [0,1]} \cdot d\alpha$ has compact support.

It remains to bound from above the $L^q$-norm of $d\chi \wedge \alpha$. One has 
$d \chi  = \chi'(t)dt$, with $\chi'$ supported on $[0,1]$. Thus
$$\Vert d\chi \wedge \alpha \Vert_{L^q \Omega^{\ell}} \leqslant \Vert \chi' \cdot \alpha \Vert _{L^q \Omega^{\ell-1}} 
\leqslant C_1 \Vert {\bf1}_{t \in [0,1]} \cdot \alpha \Vert _{L^q \Omega^{\ell-1}},$$
with $C_1 = \Vert \chi' \Vert _\infty$.
Since ${\bf1}_{t \in [0,1]} \cdot \alpha$
has compact support, the $L^q$-norm of $d\chi \wedge \alpha$ is finite too. The statement follows.

{\it Step 2}. We now compute $\int \omega \wedge d(\chi \cdot \alpha)$. Since  
$\varphi _t ^*(\omega)$ and  $\omega$
are cohomologous (by Proposition \ref{dR-cohomologous}), one has thanks to Proposition \ref{dR-Pd}(2):
\begin{align*}
\int \omega \wedge d(\chi \cdot \alpha) &= \int \varphi _t^*(\omega) \wedge d(\chi \cdot \alpha)\\
&= \int \varphi _t^*(\omega) \wedge d\chi \wedge \alpha  + \int \varphi _t^*(\omega) \wedge (\chi \cdot d \alpha).
\end{align*}
Since the form $d\chi \wedge \alpha$ belongs to $\Omega _c ^{\ell}(M)$, one has 
$$\lim _{t \to \infty} \int \varphi _t^*(\omega) \wedge d\chi \wedge \alpha = (\pi ^*T)(d\chi \wedge \alpha),$$
indeed $\varphi _t^*(\omega)$ tends to $\pi ^*T$ in the sense of currents thanks to assumption (1), Propositions \ref{dR-identification} and \ref{boundary-prop}.

One observes that the map $\pi^* : \mathcal D '^{i}(N) \to \mathcal D '^{i}(M)$ can be written as $(\pi ^* T)(\beta) = T\bigl(j(\beta)\bigr)$
where $j: \Omega _c ^{D-i}(M) \to \Omega _c ^{D-1-i}(N)$ is defined by
$$j(\beta) = \int _{\mathbf R} (\iota _{\xi} \beta) _{(t, \cdot)}~ dt$$
(we recall that $\xi = \frac{\partial}{\partial t}$).
Since the inner product is an anti-derivation (see {\it e.g.\!} \cite[Proposition 20.8]{Tu}) and since $\iota _\xi \alpha =0$, 
one has 
$$\iota _\xi (d\chi \wedge \alpha) = (\iota _\xi d\chi)\wedge \alpha - d\chi \wedge (\iota _\xi \alpha) = \chi ' \cdot  \pi ^* \theta.$$
Therefore $j(d\chi \wedge \alpha) = \int _{\mathbf R} \chi '(t) \cdot \theta ~dt = \theta$, and we obtain 
$$(\pi ^*T)(d\chi \wedge \alpha) = T(\theta).$$

{\it Step 3}. According to the previous paragraph, it remains to prove that 
$$\lim _{t \to +\infty} \int \varphi _t^*(\omega) \wedge (\chi \cdot d \alpha) =0.$$
For $s>0$, let $\chi_s : M \to \mathbf R$ be a $C^\infty$-function depending only on the $\mathbf R$-variable, such that $\chi _s (t) = \chi (t)$ for $t \leqslant s$ and $\chi_s (t) =0$ for $t \geqslant s+1$.
Observe that $\chi_s \cdot d \alpha$ is $C^\infty$ with compact support. 
We claim that:
\begin{itemize}
\item For every $s>0$, one has $\lim _{t \to +\infty} \int \varphi _t^*(\omega) \wedge (\chi_s \cdot d \alpha) =0$,
\item $\int \varphi _t^*(\omega) \wedge \bigl((\chi - \chi_s) \cdot d \alpha\bigr)$ tends to 0 uniformly in $t>0$ when $s \to +\infty$.
\end{itemize}
As explained above, the claim completes the proof of the theorem.
The first item of the claim follows from the same type of argument that we used in Step 2. Note that here we have $\iota _\xi (\chi_s \cdot d \alpha)= \chi _s \cdot \iota_\xi \pi ^* \theta = 0$.

To prove the second item, recall from Proposition \ref{dR-identification} that $\varphi _t^*(\omega)$ converges in $\Psi ^{p, k}(M)$ when $t \to +\infty$.
Therefore there exists $M>0$ such that $\Vert \varphi _t^*(\omega) \Vert _{\Psi ^{p, k}} \leqslant M$ for every $t>0$.
Write $\varphi _t^*(\omega) = \beta _t + d \gamma _t$ with $\Vert \beta_t \Vert _{L^p \Omega ^{k}} + \Vert \gamma_t \Vert _{L^p \Omega ^{k-1}} \leqslant 2M$.
Observe that $(\chi - \chi_s) \cdot d \alpha$ belongs to $\Omega ^{q, \ell}(M)$. Since $M$ is complete, the space $\Omega^\ell _c (M)$ is dense in $\Omega ^{q, \ell}(M)$ (see \cite[Proof of Lemma 4]{GT10}). Thus for every $t>0$, one gets with H\"older:
\begin{align*}
\bigl\vert &\int \varphi _t^*(\omega) \wedge \bigl( (\chi - \chi_s) \cdot d \alpha\bigr) \bigr\vert \\ 
=  \bigl\vert &\int \beta_t \wedge \bigl( (\chi - \chi_s) \cdot d \alpha \bigr) + (-1)^k \int \gamma_t \wedge d(\chi - \chi_s) \wedge d \alpha \bigr\vert \\
\leqslant \ \  ~& 2M \Vert (\chi - \chi_s) \cdot d \alpha \Vert _{L^q \Omega ^{\ell}} + 2M \Vert d(\chi - \chi_s) \wedge d \alpha \Vert _{L^q \Omega^{\ell+1}}.
\end{align*}
By the same type of argument that we used in Step 1, one obtains that the last two norms tend to  $0$ when $s \to +\infty$.
\end{proof}

\begin{proof}[Proof of Corollary \ref{boundary-cor}]
Let  $\omega \in \Omega ^{p, k}(M)\cap \Ker d$ be such that 
$$\int _M \omega \wedge d(\chi \cdot \pi^* \theta) = 0$$ 
for every $\theta 
\in \Omega _c ^{\ell-1}(N)$. According to Poincar\'e duality (Proposition \ref{dR-Pd}), it is enough to show that $[w] =0$ in $L^p \mathrm H _{\mathrm {dR}} ^{k} (M) $. By Propositions \ref{dR-identification} and \ref{boundary-prop}, this is equivalent
to $T =0$, where $T \in D '^{k}(N) \cap \Ker d$ is the $k$-current so that $\omega _\infty = \pi ^*(T)$. From Theorem \ref{boundary-thm} and our assumption, one has for every $\theta \in \Omega _c ^{\ell-1}(N)$: 
$$T(\theta) = \int _M \omega \wedge d(\chi \cdot \pi^* \theta) = 0.$$
Thus $T =0$.

Suppose now that $\theta = d \alpha $ is an exact form, with $\alpha \in \Omega _c ^{\ell-2}(N)$. Then by using again Poincar\'e duality as above, we obtain that the class of $d(\chi \cdot \pi^* \theta)$ is null in $L^q \overline{\mathrm H _{\mathrm{dR}} ^{\ell}} (M)$, since $T(\theta) = dT (\alpha) = 0$ for every $T \in D '^{k}(N) \cap \Ker d$.
\end{proof}

\section{The Lie group case}
\label{s - Lie}

We consider in this section a connected Lie group $G = \mathbf R \ltimes_\delta \!H$, whose law is $(t,x)\cdot (s, y) = (t+s, x e^{t\delta}(y))$, where $\delta \in \mathrm{Der}(\frak h)$ is a derivation of the Lie algebra $\frak h$ of the closed subgroup $H$. {\bf We will always assume that the eigenvalues of $\delta$ all have non-positive real parts, and that $\mathrm{trace}(\delta) <0$}. In particular $G$ is non-unimodular. We set $n := \dim H$ so that $D :=\dim G = n+1$. Equip $G$ with a left-invariant Riemannian metric and with the associated Riemannian measure $d\mathrm{vol}$.

\subsection{A strip decomposition}
We exhibit some regions of the set of parameters $(p, k) \in (1, +\infty) \times \{1, \dots, n\}$, where the results of the previous sections apply and give some informations on the spaces $L^p \mathrm{H}^{k} _{\mathrm{dR}}(G)$ --- see Theorem \ref{Lie-thm} below. These regions form a kind of a ``strip decomposition'' of the set of parameters. Examples will be given in Sections \ref{s - real hyp sp} and \ref{s - complex hyp sp}.

We start with the following lemma which translates the norm assumptions that appeared repeatedly in the previous sections, into simple inequalities between the exponent $p$ and the eigenvalues of $-\delta$. 

Let  $0 \leqslant \lambda _1 \leqslant \lambda _2 \leqslant \dots \leqslant \lambda _n$  be the ordered list of the real parts of the eigenvalues of $-\delta$, 
enumerated with their multiplicities in the generalized eigenspaces. We denote by $w_k = \sum _{i=1}^k \lambda _i$ 
the sum of the $k$ first real part eigenvalues, and by $W_k = \sum _{j=0}^{k-1} \lambda _{n-j}$ 
the sum of the $k$ last ones. 
We also set $w_0 = W_0 = 0$.

Note that we always have: $w_{k-1} \leqslant w_k \leqslant W_k$ and $w_{k-1} \leqslant W_{k-1} \leqslant W_k$, but the comparison between $w_k$ and $W_{k-1}$  is not automatic. 
This can be seen for instance by considering the example where $\lambda_1 = \lambda_2 = \dots = \lambda_{n-1} = 1$ and $\lambda_n = a \geqslant 1$; then for $a>2$ we have $W_{k-1} > w_k$, for $a=2$ we have $W_{k-1} = w_k$ and for $a<2$ we have $W_{k-1} < w_k$. 

Let $h = \sum _{i=1}^n \lambda _i >0$ be the trace of $-\delta$. If $w_k =0$ (resp. $W_k =0$), we put $\frac{h}{w_k} := +\infty$ (resp. $\frac{h}{W_k} := +\infty$). One has $w_k + W_{n-k} =h$ for every $k \in \{0, \dots, n\}$; therefore $\frac{h}{w_k}$ 
and $\frac{h}{W_{n-k}}$ are H\"older conjugated (even if $w_k$ or $W_{n-k}$ is $0$).

\begin{lemma} 
\label{Lie-equivalence} 
Let $\xi = \frac{\partial}{\partial t}$ be the left-invariant vector field on $G$ 
carried by the $\mathbf R$-factor, and let $\varphi _t$ be its flow (it is just a translation along the $\mathbf R$-factor). Let $p, q \in (1, +\infty)$ and $k, \ell \in \{1, \dots,n\}$ be such that $(p, k)$ and $(q, \ell)$ are Poincar\'e dual. 
The following properties are equivalent:
\begin{enumerate}
 \item There exist $C, \eta >0$ such that for $t \geqslant 0$:
$$\Vert \varphi _t ^* \Vert _{L^p \Omega ^{k-1} \to L^p \Omega ^{k-1}} 
\leqslant C e^{-\eta t}.$$
\item There exist $C, \eta >0$ such that for $t \geqslant 0$:
$$\Vert \varphi _{-t} ^* \Vert _{\Ker \iota _\xi \cap L^q \Omega ^{\ell} \to L^q \Omega ^{\ell} \cap \Ker \iota _\xi} 
\leqslant C e^{-\eta t}.$$
\item We have: $p<\frac{h}{W_{k-1}}.$
\item We have: $q > \frac{h}{w_{\ell}}$.
\end{enumerate}
In particular conditions (1) and (2) in Theorem \ref{boundary-thm} are
equivalent when $M= G$.
\end{lemma}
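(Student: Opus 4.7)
The plan is to compute the operator norms of $\varphi_t^*$ and $\varphi_{-t}^*$ on the relevant spaces of forms in terms of the eigenvalues of $\delta$, and then to read off the stated conditions. The equivalence of (3) and (4) is a short algebraic manipulation: since $(p,k)$ and $(q,\ell)$ are Poincar\'e dual, one has $p^{-1}+q^{-1}=1$ and $k-1=n-\ell$; combined with the identity $w_\ell + W_{n-\ell} = h$, this immediately turns $p<h/W_{k-1}$ into $q>h/w_\ell$.

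For the norm computation, I would first note that $\varphi_t$ coincides with the right translation $R_{(t,e_H)}$, so that on left-invariant forms $\varphi_t^*$ acts on $\mathfrak g^*$ by precomposition with $\mathrm{Ad}_{(-t,e_H)}$; the latter equals $e^{-t\delta}$ on $\mathfrak h$ and is the identity on $\mathbf R$. Choose a left-invariant basis $(\theta_1,\dots,\theta_n,dt)$ of $T^*G$ adapted to the real Jordan decomposition of $\delta$, so that $\theta_i$ lies in the generalized eigenspace with real part of eigenvalue $-\lambda_i$. On this basis $\varphi_t^*$ scales $\theta_i$ by $e^{t\lambda_i}$ (with polynomial-in-$t$ corrections in the non-diagonalizable case) and fixes $dt$. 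In the product coordinates $(s,y) \in \mathbf R \times H$, the Riemannian (left Haar) measure reads $d\mathrm{vol} = e^{sh}\, ds\, dy_H$, and the substitution $s \mapsto s+t$ gives the scalar identity $\|f \circ \varphi_t\|_{L^p(G)} = e^{-th/p}\|f\|_{L^p(G)}$ for every function $f$.

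Combining these two ingredients on a monomial $f\theta_I \in L^p\Omega^{k-1}$ with $|I|=k-1$ yields $\|\varphi_t^*(f\theta_I)\|_{L^p} \sim e^{t(w_I - h/p)}\|f\theta_I\|_{L^p}$ (up to a polynomial prefactor), and on a monomial $g\theta_J\wedge dt$ with $|J|=k-2$ the rate is $e^{t(w_J - h/p)}$. Since $W_{k-1}\geqslant W_{k-2}$, the dominant rate is $e^{t(W_{k-1}-h/p)}$: an upper bound of this order on $\|\varphi_t^*\|_{L^p\Omega^{k-1}\to L^p\Omega^{k-1}}$ is saturated by taking $f$ to be a bump function and $I$ to be the set of indices of the $k-1$ largest $\lambda_i$'s. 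Exponential decay for $t\geqslant 0$ is therefore equivalent to $p<h/W_{k-1}$, which proves (1)$\iff$(3). The argument for (2)$\iff$(4) is strictly parallel: the elements of $\Ker\iota_\xi$ are monomials $f_I\theta_I$ with $|I|=\ell$, the rate under $\varphi_{-t}^*$ in the $L^q$-norm is $e^{t(h/q - w_I)}$, and minimising $w_I$ over such $I$ gives $w_\ell$, whence condition (4).

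The main subtlety is the non-diagonalizable case: Jordan blocks produce polynomial-in-$t$ corrections to the exponential factors $e^{tw_I}$. These are absorbed into any arbitrarily small strict improvement of the exponential rate, so they play no role in determining the strict inequalities characterising exponential decay. The last sentence of the lemma is then immediate, since hypotheses (1) and (2) of Theorem~\ref{boundary-thm} applied to $M=G$ coincide with (1) and (2) above, both of which we have shown to be equivalent to (3) (equivalently (4)).
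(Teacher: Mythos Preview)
Your proof is correct and follows essentially the same approach as the paper: the paper proves (3)$\Leftrightarrow$(4) by the H\"older conjugacy of $h/W_{k-1}$ and $h/w_\ell$, exactly as you do, and for (1)$\Leftrightarrow$(3) and (2)$\Leftrightarrow$(4) it simply refers to \cite[Proof of Proposition 2.1]{BR2}, whose content is precisely the operator-norm computation you have spelled out (decomposing forms in a basis adapted to the generalized eigenspaces of $\delta$, combining the scaling $e^{t\lambda_i}$ on $\theta_i$ with the Jacobian factor $e^{-th/p}$ on functions, and maximizing/minimizing the resulting exponent over index sets of the appropriate size). Your treatment of the non-diagonalizable case via polynomial corrections absorbed into strict exponential rates is also the standard way this is handled there.
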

\begin{proof} The equivalences (1)$\Leftrightarrow$(3) and (2)$\Leftrightarrow$(4) follow from the same line of arguments as in \cite[Proof of Proposition 2.1]{BR2}. 
 To obtain (3)$\Leftrightarrow$(4) one notices that $\frac{h}{W_{k-1}}$ and
 $\frac{h}{w_{\ell}}$ are H\"older conjugated, since $w_{D-k} + W_{k-1} = h$.
\end{proof}
We can now summarize and specify the results of the previous sections, to obtain the following statement that complements results of Pansu \cite[Corollaire 53 and Proposition 57]{Pa99}.

\begin{theorem} 
\label{Lie-thm}
Let $G$ be a Lie group as above. Let $p \in (1, +\infty)$ and $k \in \{1, \dots,n\}$.
\begin{enumerate}
 \item{}{\rm [Vanishing]}~If $p<\frac{h}{W_k}$ or $p>\frac{h}{w_{k-1}}$, then $L^p \mathrm{H}^{k} _{\mathrm{dR}}(G) =\{0\}$.
 \item{}{\rm [Hausdorff property]}~If $\frac{h}{W_k} <p<\frac{h}{W_{k-1}}$, then $L^p \mathrm{H}^{k} _{\mathrm{dR}}(G)$ is Hausdorff and Banach-isomorphic to $\mathcal Z^{p,k}(G, \xi)$.
 \item{}{\rm [Density]}~If $\frac{h}{w_k} <p<\frac{h}{w_{k-1}}$, then the classes of the $d(\chi \cdot \pi ^* \theta)$'s 
(where $\theta \in \Omega _c ^{k-1}(H)$) form a dense subspace in $L^p \overline{\mathrm H _{\mathrm {dR}} ^{k}}(G)$.
 \item{}{\rm [Poincar\'e duality (on the boundary)]}~Let $(q,\ell)$ be the Poincar\'e dual of $(p,k)$.
Then we have $\frac{h}{W_k} <p<\frac{h}{W_{k-1}}$ if and only if
$\frac{h}{w_\ell} <q<\frac{h}{w_{\ell-1}}$, in which case for every $[\omega] \in L^p \mathrm{H}^{k} _{\mathrm{dR}}(G)$ and every $[d(\chi \cdot \pi ^* \theta)] \in  L^q \overline{\mathrm H _{\mathrm {dR}} ^{\ell}}(G)$, we have 
$$\int _G \omega \wedge d(\chi \cdot \pi^* \theta) = T(\theta),$$
where $T$ is the closed $k$-current on $H$ such that $\displaystyle \lim _{t \to +_\infty} \varphi_t ^*(\omega) = \pi ^*(T)$ (as in Propositions \ref{dR-identification} and \ref{boundary-prop}). Moreover, one has:
$$~~~~~~~\bigl\Vert [\omega ] \bigr\Vert_{L^p {\mathrm{H^k}}} = \sup \Bigl\{T(\theta) : \theta \in \Omega _c ^{\ell-1}(H),~ \bigl\Vert [d(\chi \cdot \pi ^* \theta) ] \bigr\Vert_{L^q \overline{\mathrm{H^\ell}}} \leqslant 1 \Bigr\}.$$
\end{enumerate}
\end{theorem}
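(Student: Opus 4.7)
The strategy is to specialize the general machinery of Sections \ref{s - currents} and \ref{s - flows} to $M = G$ with $\xi = \partial/\partial t$, using Lemma \ref{Lie-equivalence} to convert every abstract contraction hypothesis into one of the eigenvalue inequalities on $p$ and $(w_j, W_j)$. Since $G$ has bounded geometry, the boundedness of $\varphi_t^*$ on each $L^p\Omega^k$ required in Section \ref{s - flows} is automatic, and the exponential-decay hypotheses translate verbatim via Lemma \ref{Lie-equivalence}. The four items will then be read off, respectively, from Corollary \ref{dR-vanishing} (plus a duality argument), Proposition \ref{dR-identification}, Corollary \ref{boundary-cor}, and the pair (Theorem \ref{boundary-thm}, Corollary \ref{boundary-norm}).

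For Part (1), if $p < h/W_k$ then $W_{k-1} \leqslant W_k$ forces $p < h/W_{k-1}$ as well, so one obtains exponential decay of $\varphi_t^*$ on both $L^p\Omega^{k-1}$ and $L^p\Omega^{k}$, and Corollary \ref{dR-vanishing} yields $L^p \mathrm{H}^k_{\mathrm{dR}}(G)=\{0\}$. The range $p > h/w_{k-1}$ must be handled indirectly: writing $q = p/(p-1)$ and using $w_{k-1}+W_{n-k+1}=h$ one gets $q < h/W_{n-k+1}$, so the previous case applied to the pair $(q, n-k+1)$ gives $L^q \mathrm{H}^{D-k}_{\mathrm{dR}}(G)=\{0\}$, and Proposition \ref{dR-Pd}(2) forces $L^p \overline{\mathrm{H}^k_{\mathrm{dR}}}(G)=0$. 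Moreover Proposition \ref{dR-identification}, applicable since $q < h/W_{n-k+1} = h/W_{D-k}$, shows that $L^q \mathrm{H}^{D-k+1}_{\mathrm{dR}}(G)$ is Hausdorff, so Proposition \ref{dR-Pd}(1) makes $L^p \mathrm{H}^k_{\mathrm{dR}}(G)$ Hausdorff; combined with the vanishing of the reduced version this gives $L^p \mathrm{H}^k_{\mathrm{dR}}(G) = \{0\}$.

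Part (2) is a direct application of Proposition \ref{dR-identification} in the range $p < h/W_{k-1}$, which contains the interval $h/W_k < p < h/W_{k-1}$ appearing in the statement. For Part (3) I apply Corollary \ref{boundary-cor} by taking the Poincar\'e-dual pair of that corollary to be $(p/(p-1), D-k)$, so that density is asserted in $L^p \overline{\mathrm{H}^k_{\mathrm{dR}}}(G)$; the hypothesis of Theorem \ref{boundary-thm} for the dual pair reads $p/(p-1) < h/W_{D-k-1}$, which after H\"older inversion using $w_k + W_{n-k} = h$ becomes exactly $p > h/w_k$. For Part (4), the identities $w_j + W_{n-j} = h$ show that H\"older conjugation exchanges the two strips $(h/W_k, h/W_{k-1})$ and $(h/w_\ell, h/w_{\ell-1})$ with $\ell = D-k$; inside this regime Lemma \ref{Lie-equivalence} makes both contraction hypotheses of Theorem \ref{boundary-thm} simultaneously available, so the integral formula is the conclusion of Theorem \ref{boundary-thm}, and the norm identity is Corollary \ref{boundary-norm}.

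The only step beyond bookkeeping is the treatment of $p > h/w_{k-1}$ in Part (1): there neither $\varphi_t^*$ nor $\varphi_{-t}^*$ is a contraction in all the degrees required by Corollary \ref{dR-vanishing}, so one cannot invoke that corollary directly and must instead extract vanishing by assembling Hausdorffness (Proposition \ref{dR-Pd}(1)) and reduced-cohomology vanishing (Proposition \ref{dR-Pd}(2)) coming from the H\"older-conjugate problem. All other parts are direct consequences of the statements in Sections \ref{s - currents} and \ref{s - flows} once Lemma \ref{Lie-equivalence} is applied.
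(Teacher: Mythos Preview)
Your proof is correct and follows essentially the same approach as the paper: each item is reduced via Lemma \ref{Lie-equivalence} to the corresponding abstract result of Section \ref{s - flows} (Corollary \ref{dR-vanishing}, Proposition \ref{dR-identification}, Corollary \ref{boundary-cor}, Theorem \ref{boundary-thm} and Corollary \ref{boundary-norm}), with the range $p>h/w_{k-1}$ in Item (1) handled by Poincar\'e duality after passing to the H\"older-conjugate pair. Your write-up is slightly more explicit than the paper's (e.g.\ you spell out the index bookkeeping $D-k = n-k+1$ and the verification that $q<h/W_{D-k}$ gives both the vanishing of $L^q\mathrm{H}^{D-k}_{\mathrm{dR}}$ and the Hausdorffness of $L^q\mathrm{H}^{D-k+1}_{\mathrm{dR}}$), but the argument is the same.
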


\begin{proof}
Item (2) follows from Proposition \ref{dR-identification} and Lemma \ref{Lie-equivalence}. 
Item (3) is a consequence of Corollary \ref{boundary-cor} and Lemma \ref{Lie-equivalence}. 
One deduces Item (4) from Theorem \ref{boundary-thm}, Corollary \ref{boundary-norm} and Lemma \ref{Lie-equivalence}. 

It remains to prove Item (1). 
Suppose first that $p<\frac{h}{W_k}$. Since $\frac{h}{W_k} \leqslant \frac{h}{W_{k-1}}$, Lemma \ref{Lie-equivalence} implies that $\max _{i= k-1, k} \Vert \varphi _t ^* \Vert _{L^p \Omega ^{i} \to L^p \Omega ^{i}} \leqslant C e^{-\eta t}$. 
Thus by Corollary \ref{dR-vanishing}, one has $L^p \mathrm{H}^{k} _{\mathrm{dR}}(G)= \{0\}$, and the first part of Item (1) is proved. 
The second part follows from the first one, in combination with Poincar\'e duality (Proposition \ref{dR-Pd}), and the fact that $L^q \mathrm H _{\mathrm{dR}}^{D-k+1}(G)$ is Hausdorff follows from Lemma \ref{Lie-equivalence} and Proposition \ref{dR-identification}.
\end{proof}

\begin{remark} In the special case where $H= \mathbf{R}^n$, Pansu \cite[Proposition 27]{P1} has complemented the picture seen in Theorem \ref{Lie-thm}, by showing that the {\it torsion} in $L^p \mathrm{H}^{k} _{\mathrm{dR}}(G)$ -- {\it i.e.\!} the kernel of the quotient map $L^p \mathrm{H}^{k} _{\mathrm{dR}}(G) \to  L^p \overline{\mathrm H _{\mathrm {dR}} ^{k}}(G)$ -- is non-zero for $\frac{h}{W_{k-1}} <p<\frac{h}{w_{k-1}}$ (note that  there is no such $p$ for real hyperbolic spaces). 
\end{remark}

\subsection{Norm estimates}

We complement the norm expression obtained in Theorem \ref{Lie-thm}(4). The following inequalities are not optimal, however they are often sufficient for our purposes.

\begin{proposition}
\label{Lie-inequalities}
Let $\ell \in \{1, \dots, n\}$ and $q>\frac{h}{w_{\ell}}$. There exists a constant $C>0$ such that  for every $\theta \in \Omega _c ^{\ell-1}(H)$, the norm of the class of 
$d(\chi \cdot \pi ^* \theta)$ in $L^q \overline{\mathrm H _{\mathrm {dR}} ^{\ell}} (G)$ satisfies
$$\bigl\Vert [d(\chi \cdot \pi ^* \theta)] \bigr\Vert_{L^q \overline{\mathrm{H^\ell}}(G)} 
\leqslant C \inf_{t \in \mathbf R} \bigl\{\Vert (e^{t\delta})^* d\theta \Vert _{L^q \Omega ^{\ell}(H)} + \Vert (e^{t\delta})^* \theta \Vert _{L^q \Omega^{\ell-1}(H)} \bigr\}.$$
 \end{proposition}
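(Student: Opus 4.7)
The plan is to bound $\|[d(\chi \pi^*\theta)]\|_{L^q \overline{\mathrm{H}^\ell}(G)}$ by the $L^q\Omega^\ell$-norm of a cohomologous representative $d(\chi_t \pi^*\theta)$, where $\chi_t(s) := \chi(s-t)$. Since $\pi \circ \varphi_{-t} = \pi$, one has $\varphi_{-t}^* \pi^*\theta = \pi^*\theta$ and $\varphi_{-t}^* \chi = \chi_t$, so $\varphi_{-t}^* d(\chi \pi^*\theta) = d(\chi_t \pi^*\theta)$. The hypothesis $q > h/w_\ell$ is, via Lemma~\ref{Lie-equivalence}, the decay condition that makes Step~1 of Theorem~\ref{boundary-thm} apply, ensuring $d(\chi \pi^*\theta) \in \Omega^{q,\ell}(G) \cap \Ker d$. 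Then Proposition~\ref{dR-cohomologous} gives $[d(\chi \pi^*\theta)] = [d(\chi_t \pi^*\theta)]$ in $L^q \mathrm{H}^\ell_{\mathrm{dR}}(G)$, hence in reduced cohomology, so that for every $t \in \mathbf R$,
$$\|[d(\chi \pi^*\theta)]\|_{L^q \overline{\mathrm{H}^\ell}(G)} \leqslant \|d(\chi_t \pi^*\theta)\|_{L^q \Omega^\ell(G)}.$$

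For the norm of the translate, I would work in a left-invariant orthonormal coframe $(ds, \omega_1, \dots, \omega_n)$ of $G$ adapted to the (generalized) eigenspace decomposition of $\delta$ (any non-diagonalisable Jordan block only contributes polynomial factors that are absorbed into the constants). The key pointwise identity, implicit in the proof of Theorem~\ref{boundary-thm}, is that for a left-invariant $k$-form $\eta = \sum_{|K|=k} \eta_K \eta^K$ on $H$ one has $|\pi^*\eta|^q_{(s,x)} \asymp \sum_K |\eta_K(x)|^q e^{-q|K|_\lambda s}$, with $|K|_\lambda := \sum_{i \in K}\lambda_i$. Combined with $d\mathrm{vol}_G \asymp e^{hs} ds\, d\mathrm{vol}_H$ and the substitution $u = s - t$, the two pointwise orthogonal pieces of $d(\chi_t \pi^*\theta) = \chi_t'(s) ds \wedge \pi^*\theta + \chi_t \pi^*(d\theta)$ yield
$$\|\chi_t \pi^*(d\theta)\|_{L^q \Omega^\ell(G)}^q \leqslant C_1 \sum_{|J|=\ell} \|(d\theta)_J\|^q_{L^q(H)}\, e^{(h - q|J|_\lambda) t}$$
and an analogous estimate for the other summand, indexed by $|I| = \ell - 1$. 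Crucially, $q > h/w_\ell$ forces $q|J|_\lambda \geqslant qw_\ell > h$ for every $|J|=\ell$, which is exactly what ensures the convergence at $+\infty$ of the integral $\int_0^\infty \chi(u)^q e^{(h - q|J|_\lambda)u}\, du$ absorbed into $C_1$; the analogous $C_2$ is finite for free since $\chi'$ is compactly supported.

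It remains to recognise these sums as $L^q$-norms on $H$: the identity $(e^{t\delta})^* \eta_i = e^{-\lambda_i t} \eta_i$ combined with the Jacobian $e^{-ht}$ of $e^{t\delta} : H \to H$ with respect to left-Haar measure gives
$$\|(e^{t\delta})^*\eta\|_{L^q \Omega^k(H)}^q \asymp \sum_{|K|=k} \|\eta_K\|^q_{L^q(H)}\, e^{(h - q|K|_\lambda) t}.$$
Plugging this in, one finds $\|d(\chi_t \pi^*\theta)\|_{L^q \Omega^\ell(G)} \leqslant C \bigl( \|(e^{t\delta})^* d\theta\|_{L^q \Omega^\ell(H)} + \|(e^{t\delta})^*\theta\|_{L^q \Omega^{\ell-1}(H)}\bigr)$ with $C$ independent of $\theta$ and $t$; taking the infimum over $t \in \mathbf R$ concludes. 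The only slightly subtle point is keeping the constants uniform, but once both sides are expanded in the same eigenbasis they carry identical exponential profiles $e^{(h-q|K|_\lambda)t}$ index-by-index, which makes the bookkeeping transparent.
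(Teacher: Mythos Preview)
Your proof is correct and rests on the same idea as the paper's: use Proposition~\ref{dR-cohomologous} to replace $d(\chi\,\pi^*\theta)$ by a cohomologous representative depending on $t$, then bound the class norm by the $L^q$-norm of that representative and optimise in $t$. The execution differs. You shift the cutoff ($\chi\to\chi_t$) while keeping $\theta$ fixed, and then compute $\|d(\chi_t\,\pi^*\theta)\|_{L^q\Omega^\ell(G)}$ explicitly in an eigen-coframe, finally recognising the answer as $\|(e^{t\delta})^*d\theta\|+\|(e^{t\delta})^*\theta\|$. The paper instead observes that the left translation $L_{(t,1_H)}$ is an \emph{isometry} of $G$ and that $L_{(t,1_H)}^*\bigl(d(\chi\,\pi^*\theta)\bigr)$ is $\varphi_t^*$-cohomologous to $d\bigl(\chi\cdot\pi^*(e^{t\delta})^*\theta\bigr)$; hence $\|[d(\chi\,\pi^*\theta)]\|=\|[d(\chi\,\pi^*(e^{t\delta})^*\theta)]\|$, and one simply applies the single $t$-independent estimate~(\ref{Lie-eqn}) (coming from Step~1 of Theorem~\ref{boundary-thm}) with $(e^{t\delta})^*\theta$ in place of $\theta$. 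In fact your form $d(\chi_t\,\pi^*\theta)$ and the paper's $d(\chi\,\pi^*(e^{t\delta})^*\theta)$ are related by the isometry $L_{(-t,1_H)}^*$, so they have the same $L^q$-norm: your computation is effectively re-deriving this equality together with Lemma~\ref{Lie-lemma} in one go. The paper's route is shorter and sidesteps the eigenbasis bookkeeping entirely (so the non-diagonalisable case is automatic), at the price of invoking the group structure more explicitly.
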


 \begin{proof} Since $q>\frac{h}{w_{\ell}}$, Lemma \ref{Lie-equivalence} shows that the assumptions of Theorem \ref{boundary-thm} are satisfied.
By analysing Step 1 in the proof of Theorem \ref{boundary-thm}, and by using the homogeneity  of $G$, one sees that there exists a constant $C>0$ such that for every $\theta \in \Omega _c ^{\ell-1}(H)$:
\begin{equation}\label{Lie-eqn}
\Vert d(\chi \cdot \pi ^* \theta) \Vert _{\Omega ^{q, \ell}(G)} \leqslant C \bigl(\Vert d\theta \Vert _{L^q \Omega ^{\ell}(H)} + \Vert \theta \Vert _{L^q \Omega ^{\ell-1}(H)} \bigr).
\end{equation}
The left translation by $(t,1_H)$, which we denote by $L_{(t,1_H)}$, is an isometry of $G$. Therefore it acts by isometry on $L^q \overline{\mathrm H _{\mathrm{dR}} ^{\ell}} (G)$. 
One has:
$$L_{(t,1_H)}^*\bigl(d(\chi \cdot \pi ^* \theta)\bigr) = 
d (\chi \circ \varphi_t \cdot \pi ^* (e^{t\delta})^* \theta)
=\varphi _t ^*\bigl(d(\chi \cdot \pi ^* (e^{t\delta})^* \theta)\bigr).$$
Thus, by Proposition \ref{dR-cohomologous}, the forms
$L_{(t,1_H)}^*(d(\chi \cdot \pi ^* \theta))$ and $d(\chi \cdot \pi ^* (e^{t\delta})^* \theta)$ are cohomologous in $L^q \overline{\mathrm H _{\mathrm{dR}} ^{\ell}} (G)$. So the classes of $d(\chi \cdot \pi ^* \theta)$ and of $d(\chi \cdot \pi ^* (e^{t\delta})^* \theta)$ have equal norm.
One obtains the proposition by applying inequality (\ref{Lie-eqn}) to the $(e^{t\delta})^* \theta$'s.
\end{proof}
Proposition \ref{Lie-inequalities} provides upper bounds for norms of classes by means of norms of forms. These upper bounds on norms of forms can themself be obtained thanks to the following lemma:

\begin{lemma}
\label{Lie-lemma}
Let $H$ be a connected Lie group equipped with a left-invariant Riemannian metric, and let $\frak h$  be its Lie algebra. Let $\delta \in \mathrm{Der}(\frak h)$ be an $\mathbf R$-diagonalizable derivation of $\frak h$. Then for every $k \in \mathbf N$ the endomorphism $\delta ^* : \Lambda ^k \frak h ^* \to \Lambda ^k \frak h ^* $ is diagonalizable too. Let $\{\omega _I\} \subset \Lambda ^k \frak h ^*$ be a basis of eigenvectors, and denote by $\mu_I \in \mathbf R$ the corresponding eigenvalues. By identifying 
$\Lambda ^k \frak h ^* $ with the space of left-invariant $k$-forms on $H$,
every $\omega \in \Omega ^k(H)$ decomposes uniquely as $\omega = \sum _I f_I \omega_I$, 
where $f_I \in \Omega ^0(H)$. One has
$$\Vert (e^\delta)^* \omega \Vert
_{L^p \Omega ^k(H)} \asymp _D \sum _I e^{\mu _I -\frac{h}{p}}\Vert f_I \Vert _{L^p(H)},$$
where $h$ is the trace of $\delta$, and $D>0$ is a constant which depends only on $p$ and the choice of $\{\omega _I\}$.
\end{lemma}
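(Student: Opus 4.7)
The plan is to write $(e^\delta)^*\omega$ explicitly in the chosen eigenbasis, take pointwise norms using the left-invariance of the metric, and conclude with a change-of-variable driven by the fact that $e^\delta$ acts on $H$ by a Lie group automorphism of constant Jacobian $e^h$.

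First I would check that $\delta^*$ is diagonalizable on $\Lambda^k\frak h^*$: the dual derivation $\delta^*$ on $\frak h^*$ is $\mathbf{R}$-diagonalizable with the same spectrum as $\delta$, and its extension to $\Lambda^k\frak h^*$ by the Leibniz rule is then diagonalized by wedges of eigencovectors, with eigenvalues equal to the sums $\lambda_{i_1}+\cdots+\lambda_{i_k}$. After identifying $\Lambda^k\frak h^*$ with left-invariant $k$-forms on $H$, the automorphism $(e^\delta)^*$ acts on any chosen eigenbasis $\{\omega_I\}$ by $e^{\mu_I}$. Writing $\omega = \sum_I f_I\omega_I$ and using left-invariance of the $\omega_I$, this gives $(e^\delta)^*\omega = \sum_I e^{\mu_I}(f_I\circ e^\delta)\,\omega_I$.

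Next I would pass to pointwise norms. Because the Riemannian metric is left-invariant, the functions $g\mapsto |\omega_I|_g$ are constant in $g$, so finite-dimensional norm equivalence on the fibre $\Lambda^k T^*_g H$, with a constant depending only on the chosen basis, yields $|(e^\delta)^*\omega|_g \asymp_D \sum_I e^{\mu_I}|f_I(e^\delta(g))|$. Raising to the $p$-th power, applying the equivalence $(\sum_I b_I)^p \asymp_D \sum_I b_I^p$ for finitely many non-negative reals, and integrating over $H$ produces $\Vert (e^\delta)^*\omega\Vert_{L^p}^p \asymp_D \sum_I e^{p\mu_I}\Vert f_I\circ e^\delta\Vert_{L^p}^p$.

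To finish I would perform a change of variable on each term $\Vert f_I\circ e^\delta\Vert_{L^p}$. Because $e^\delta$ is a Lie group automorphism of $H$, the relation $e^\delta\circ L_g = L_{e^\delta(g)}\circ e^\delta$ together with left-invariance of $d\mathrm{vol}$ forces the Jacobian of $e^\delta$ with respect to $d\mathrm{vol}$ to be the constant $\det(d(e^\delta)_e) = e^h$; hence $\Vert f_I\circ e^\delta\Vert_{L^p} = e^{-h/p}\Vert f_I\Vert_{L^p}$. Substituting back yields $\Vert (e^\delta)^*\omega\Vert_{L^p}^p \asymp_D \sum_I e^{p(\mu_I - h/p)}\Vert f_I\Vert_{L^p}^p$, and taking $p$-th roots while invoking the finite-dimensional equivalence $(\sum_I c_I^p)^{1/p}\asymp_D \sum_I c_I$ one last time produces the announced bound. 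No step is really an obstacle; the only mild subtlety is the constancy of the Jacobian of $e^\delta$, which rests on the same left-invariance principle that makes the pointwise norms $|\omega_I|_g$ constant.
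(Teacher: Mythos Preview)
Your proof is correct and follows essentially the same route as the paper's: expand $(e^\delta)^*\omega$ in the eigenbasis, use equivalence of norms on the finite-dimensional fibre $\Lambda^k\frak h^*$ to compare the pointwise norm with $(\sum_I|\cdot|^p)^{1/p}$, and perform the change of variable $g\mapsto e^\delta(g)$ with constant Jacobian $e^h$. You are in fact slightly more explicit than the paper in justifying the diagonalizability of $\delta^*$ on $\Lambda^k\frak h^*$ and the constancy of the Jacobian.
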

\begin{proof}
 Since the norms on $\Lambda^k \mathfrak h^*$ are all equivalent, there exists a constant $C>0$ such that for every 
$\omega = \sum _I f_I \omega_I \in \Omega^k(H)$ and $g \in H$: 
$$ \vert \omega \vert _g \asymp_C 
\bigl( \sum _I \vert f_I(g) \vert^p \bigr) ^{\frac{1}{p}}.$$
On the other hand: 
$$(e^\delta)^* \omega  = \sum _I (f_I \circ e^\delta) \cdot (e^\delta)^* \omega _I 
= \sum _I (f_I \circ e^\delta)  \cdot e^{\mu_I }\omega _I .$$
Therefore: 
\begin{align*}
\Vert (e^\delta)^* \omega \Vert _{L^p \Omega^k}^p &= \int _H \vert {e^\delta}^* \omega \vert _g^p ~d\mathrm{vol}(g)\\
&\asymp _{C^p} \int_H \sum _I \vert e^{\mu_I }(f_I \circ e^\delta)(g) \vert^p ~d \mathrm{vol}(g)\\
&= \int_H \sum _I  e^{p\mu_I}\vert f_I (g) \vert^p \mathrm{Jac}( e^{-\delta}) (g) ~d \mathrm{vol} (g)\\
&= \int_H \sum _I  e^{p(\mu_I -\frac{h}{p})} \vert f_I (g) \vert^p ~d \mathrm{vol} (g)\\
&= \sum _I e^{p(\mu _I -\frac{h}{p})}\Vert f_I \Vert _{L^p}^p,
\end{align*}
since the Jacobian of $e^{\delta }$ is $e^{h}$. Thus:
$$\Vert (e^\delta)^* \omega \Vert _{L^p \Omega^k} \asymp _C 
\bigl(\sum _I e^{p(\mu _I -\frac{h}{p})}\Vert f_I \Vert _{L^p}^p\bigl) ^{\frac{1}{p}} \asymp _D \sum _I e^{\mu _I -\frac{h}{p}}\Vert f_I \Vert _{L^p},$$
where $D$ depends only on $p$ and $\{\omega _I\}$.
\end{proof}

\section{Real hyperbolic spaces} 
\label{s - real hyp sp}

We collect applications to a first series of concrete examples, namely real hyperbolic spaces. 
Let $R = {\mathbf R} \ltimes_\delta \!{\mathbf R}^{n}$ with $\delta = - \mathrm{id}_{\mathbf R^n} \in \mathrm{Der}(\mathbf R ^n)$. 
Then $R$ is a solvable Lie group isometric to the real hyperbolic space $\mathbb H^{n+1} _{\mathbf R}$. 
Its cohomology admits the following rather simple description, which appears already in \cite{P1} (apart from the density statement).

\begin{theorem}
\label{real-thm}
For every $k\in \{1, \dots , n\}$, one has: 
\begin{enumerate}
\item $L^p \mathrm{H}^k _{\mathrm{dR}}(R) =\{0\}$ for $1< p<\frac{n}{k}$ or $p>\frac{n}{k-1}$. 
\item If $\frac{n}{k}<p <\frac{n}{k-1}$, then $L^p \mathrm{H}^{k} _{\mathrm{dR}}(R)$ is Hausdorff, and Banach isomorphic to  $\mathcal Z^{p,k}(R, \xi)$.  
The space $\{\pi ^*d\theta ~\vert~ \theta \in \Omega _c ^{k-1}(\mathbf R ^n)\}$ is dense in $\mathcal Z^{p,k}(R, \xi)$; in particular $L^p \mathrm{H}^{k} _{\mathrm{dR}}(R)$ is non-zero. 
\end{enumerate}
\end{theorem}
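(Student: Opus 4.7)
The plan is to derive the theorem as a numerical specialization of Theorem \ref{Lie-thm} applied to the derivation $\delta = -\mathrm{id}_{\mathbf{R}^n}$. First I would observe that all real parts of eigenvalues of $-\delta$ equal $1$, so with the notation of Section \ref{s - Lie} one has $\lambda_i = 1$ for all $i$, $h = n$, and $w_k = W_k = k$ for every $k$. The critical thresholds then collapse: $h/W_k = h/w_k = n/k$ and $h/W_{k-1} = h/w_{k-1} = n/(k-1)$, so the ``strip'' intervals appearing in items (2) and (3) of Theorem \ref{Lie-thm} both reduce to the same open interval $n/k < p < n/(k-1)$.

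Item (1) is then immediate from Theorem \ref{Lie-thm}(1). For Item (2), I would invoke Theorem \ref{Lie-thm}(2) to obtain both the Hausdorff property and the Banach isomorphism $L^p \mathrm{H}_{\mathrm{dR}}^k(R) \cong \mathcal{Z}^{p,k}(R, \xi)$ of Proposition \ref{dR-identification}. Since $L^p \mathrm{H}_{\mathrm{dR}}^k(R)$ is Hausdorff, it coincides with its reduced version, and Theorem \ref{Lie-thm}(3) then yields a dense family of classes $\{[d(\chi \cdot \pi^* \theta)] : \theta \in \Omega_c^{k-1}(\mathbf{R}^n)\}$ in $L^p \mathrm{H}_{\mathrm{dR}}^k(R)$. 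It will remain to transfer this density, through the above isomorphism, to the family $\{\pi^* d\theta\}$ inside $\mathcal{Z}^{p,k}(R, \xi)$.

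The only non-formal step is identifying the image of $[d(\chi \cdot \pi^* \theta)]$ under the isomorphism of Proposition \ref{dR-identification}. Since the flow $\varphi_t$ only translates the $\mathbf{R}$-factor, one has $\pi \circ \varphi_t = \pi$, hence
\[
\varphi_t^*\bigl(d(\chi \cdot \pi^* \theta)\bigr) = d\bigl((\chi \circ \varphi_t) \cdot \pi^* \theta\bigr),
\]
and $(\chi \circ \varphi_t)(s, x) = \chi(s+t) \to 1$ pointwise as $t \to +\infty$. The $\Psi^{p,k}$-limit asserted by Proposition \ref{dR-identification} must therefore agree, as a current, with $d(\pi^* \theta) = \pi^* d\theta$. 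Thus the image of $[d(\chi \cdot \pi^* \theta)]$ is exactly $\pi^* d\theta$, and density transfers. Non-vanishing will then follow by choosing any $\theta \in \Omega_c^{k-1}(\mathbf{R}^n)$ with $d\theta \neq 0$, since $\pi^* d\theta$ is pointwise non-zero and hence a non-zero element of $\mathcal{Z}^{p,k}(R, \xi)$. I do not anticipate any substantial obstacle, as all the real work has already been done in Theorem \ref{Lie-thm}; the remaining argument is essentially bookkeeping combined with the routine limit computation above.
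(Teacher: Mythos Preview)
Your proposal is correct and follows essentially the same approach as the paper: compute $w_k = W_k = k$ and $h = n$, invoke Theorem \ref{Lie-thm}(1)--(3), and then identify the image of $[d(\chi \cdot \pi^*\theta)]$ under the isomorphism of Proposition \ref{dR-identification} via the limit computation $\varphi_t^*\bigl(d(\chi \cdot \pi^*\theta)\bigr) = d\bigl((\chi \circ \varphi_t) \cdot \pi^*\theta\bigr) \to \pi^* d\theta$. The paper's proof is the same, only more tersely written.
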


\begin{proof}
For $k\in \{1, \dots , n\}$ one has $w_k = W_k = k$ and $h=n$. Item (1) comes from Theorem \ref{Lie-thm}(1). 
Item (2) is an application of Theorem \ref{Lie-thm}(2) and (3), in combination with Proposition \ref{dR-identification}. Indeed we have: 
$$\lim _{t \to +\infty} \varphi ^* _t \bigl (d(\chi \cdot \pi^* \theta) \bigr) 
= \lim _{t \to +\infty} d\bigl((\chi \circ \varphi_t) \cdot \pi^* \theta \bigr)
= d\pi^* \theta = \pi ^* d\theta,$$
in the sense of currents.
\end{proof}
We also obtain the following norm estimates. Recall from Proposition 2.4 that every $\psi \in \mathcal Z^{p,k}(R, \xi)$ can be written as $\psi = \pi^* T$ for some (unique) $T \in \mathcal D'^k(\mathbf R^n)$.
\begin{proposition}\label{real-prop} Let $k \in \{1, \dots , n\}$, $\frac{n}{k}<p <\frac{n}{k-1}$, and $(q, \ell)$ be the Poincar\'e dual of $(p, k)$. There exists some constant $C >0$ such that
the norm of every current $\pi ^*(T) \in \mathcal Z^{p,k}(R, \xi)$ satisfies 
$$\Vert \pi ^* T \Vert_{\Psi^{p, k}(R)} \asymp _C \sup \Bigl\{T(\theta) : \theta \in \Omega _c ^{\ell-1}(\mathbf R^n),~ \Vert \pi ^* d\theta \Vert_{\Psi^{q, \ell}(R)} \leqslant 1 \Bigr\}.$$
Moreover for $\theta \in \Omega _c ^{k-1}(\mathbf R ^n)$, the norm of the form $\pi^*d\theta 
\in \mathcal Z^{p,k}(R, \xi)$ satisfies 
$$\Vert \pi ^* d\theta \Vert_{\Psi^{p, k}(R)} 
 \leqslant C \inf _{t \in \mathbf R} \bigl\{ e^{-(k -\frac{n}{p})t} \Vert d\theta \Vert _{L^p \Omega ^{k}(\mathbf R^n)}
 + e^{(1-k +\frac{n}{p})t} \Vert \theta \Vert _{L^p \Omega ^{k-1}(\mathbf R^n)} \bigr\}.$$
 \end{proposition}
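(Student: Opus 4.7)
The plan is to combine the Banach isomorphism of Proposition \ref{dR-identification}(2), the norm formula of Corollary \ref{boundary-norm} (equivalently Theorem \ref{Lie-thm}(4)), the upper bound of Proposition \ref{Lie-inequalities}, and a direct scaling computation on $\mathbf R^n$. The assumption $\frac{n}{k}<p<\frac{n}{k-1}$ places us in the Hausdorff range of $L^p\mathrm{H}^k_{\mathrm{dR}}(R)$, and the Poincar\'e-dual couple $(q,\ell)$ with $\ell=n+1-k$ satisfies $\frac{n}{\ell}<q<\frac{n}{\ell-1}$, so the same range applies in degree $\ell$ with exponent $q$. By Theorem \ref{real-thm}(2), the map $[\omega]\mapsto\omega_\infty$ is a Banach isomorphism in both degrees, so $\|\pi^*T\|_{\Psi^{p,k}(R)}\asymp \|[\omega]\|_{L^p\mathrm{H}^k}$ whenever $\omega_\infty=\pi^*T$. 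The same observation in the dual degree, together with the identity $\lim_{t\to+\infty}\varphi_t^*(d(\chi\cdot\pi^*\theta))=\pi^*d\theta$ noted in the proof of Theorem \ref{real-thm}(2), gives $\|[d(\chi\cdot\pi^*\theta)]\|_{L^q\overline{\mathrm H^\ell}}\asymp\|\pi^*d\theta\|_{\Psi^{q,\ell}(R)}$, with reduced and unreduced norms coinciding in the Hausdorff setting.

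For the first assertion of the proposition, I would feed these equivalences into Theorem \ref{Lie-thm}(4), which reads
$$\|[\omega]\|_{L^p\mathrm{H}^k}=\sup\bigl\{T(\theta):\theta\in\Omega_c^{\ell-1}(\mathbf R^n),\ \|[d(\chi\cdot\pi^*\theta)]\|_{L^q\overline{\mathrm H^\ell}}\leq 1\bigr\}.$$
Replacing the constraint $\|[d(\chi\cdot\pi^*\theta)]\|_{L^q\overline{\mathrm H^\ell}}\leq 1$ by the equivalent constraint $\|\pi^*d\theta\|_{\Psi^{q,\ell}}\leq 1$ perturbs the supremum only by a multiplicative constant, and the equivalence $\|\pi^*T\|_{\Psi^{p,k}}\asymp\|[\omega]\|_{L^p\mathrm{H}^k}$ then yields the claimed $\asymp_C$ estimate.

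For the second assertion, I would invoke Proposition \ref{Lie-inequalities} (applicable since $p>h/w_k=n/k$) to obtain
$$\|[d(\chi\cdot\pi^*\theta)]\|_{L^p\overline{\mathrm H^k}(R)}\leq C\inf_{t\in\mathbf R}\bigl\{\|(e^{t\delta})^*d\theta\|_{L^p\Omega^k(\mathbf R^n)}+\|(e^{t\delta})^*\theta\|_{L^p\Omega^{k-1}(\mathbf R^n)}\bigr\},$$
and by the Banach isomorphism above the left-hand side is comparable to $\|\pi^*d\theta\|_{\Psi^{p,k}(R)}$. Since $\delta=-\mathrm{id}_{\mathbf R^n}$, one has $e^{t\delta}(x)=e^{-t}x$, and an elementary change of variables on flat $\mathbf R^n$ gives
$$\|(e^{t\delta})^*\omega\|_{L^p\Omega^j(\mathbf R^n)}=e^{(n/p-j)t}\|\omega\|_{L^p\Omega^j(\mathbf R^n)}$$
for every $j$-form $\omega$. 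Applied with $j=k$ to $d\theta$ and $j=k-1$ to $\theta$, this reproduces the exponents $-(k-n/p)$ and $1-k+n/p$ in the statement. The only real obstacle is bookkeeping between the three norms $\|\cdot\|_{\Psi^{p,k}}$, $\|\cdot\|_{L^p\mathrm{H}^k}$ and $\|\cdot\|_{L^p\overline{\mathrm H^k}}$ in the Hausdorff regime; all substantive analytic content has already been packaged into Proposition \ref{dR-identification}, Theorem \ref{Lie-thm}, and Proposition \ref{Lie-inequalities}.
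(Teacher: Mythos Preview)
Your proposal is correct and follows essentially the same route as the paper's proof: the Banach isomorphism of Proposition~\ref{dR-identification}(2), the norm formula of Theorem~\ref{Lie-thm}(4), and Proposition~\ref{Lie-inequalities} are combined exactly as you describe. The only cosmetic difference is that the paper invokes Lemma~\ref{Lie-lemma} with $H=\mathbf R^n$ and $\delta=-t\,\mathrm{id}_{\mathbf R^n}$ for the scaling step, whereas you compute the change of variables directly; since all eigenvalues of $\delta$ coincide here, your explicit formula $\|(e^{t\delta})^*\omega\|_{L^p\Omega^j}=e^{(n/p-j)t}\|\omega\|_{L^p\Omega^j}$ is precisely what that lemma yields in this case.
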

Observe that the exponents in the last inequality satisfy: $k -\frac{n}{p}>0$ and $1-k +\frac{n}{p} >0$.
\begin{proof} The spaces $\mathcal Z^{p,k}(R, \xi)$ and $L^p \mathrm{H}^{k} _{\mathrm{dR}}(R)$ are Banach isomorphic by the map $[\omega] \mapsto \omega_\infty$ of Prop. \ref{dR-identification} (2). 
Thus 
the form $\pi^*d\theta 
\in \mathcal Z^{p,k}(R, \xi)$ and the class $[d(\chi \cdot \pi ^* \theta)] \in L^p \mathrm{H}^{k} _{\mathrm{dR}}(R)$
have comparable norms.
The inequalities follow then from Theorem \ref{Lie-thm}(4), Proposition \ref{Lie-inequalities} and Lemma \ref{Lie-lemma}, applied with $H = \mathbf R^n$ and $\delta = -t \mathrm{id}_{\mathbf R^n}$. 
\end{proof}

\section{The groups $S _\alpha \in \mathcal S ^{2,3}_{\mathrm{straight}}$}  
\label{s - non QI solvable}

We prove Theorem \ref{S_mu-thm} (stated in the introduction) that determines the second $L^p$-cohomology of the groups $S _\alpha \in \mathcal S ^{2,3}_{\mathrm{straight}}$. 

\subsection{Reduction and decomposition}
\label{ss - construction} 

Let $S _\alpha = \mathbf R ^2 \ltimes _\alpha \mathbf R^3 \in \mathcal S ^{2,3}_{\mathrm{straight}}$. We provide a somewhat ``canonical'' presentation of the group $S _\alpha$ (see Proposition \ref{S_mu-prop}), which is then used to decompose $S _\alpha$.

Recall that precomposing $\alpha$ with an element of $\mathrm{GL}_2 (\mathbf R)$, or postcomposing with an element of the permutation group $S_3$, does not affect the isomorphism class of $S _\alpha$. 

Denote by $\{\varepsilon _1, \varepsilon _2\}$ the canonical basis of $\mathbf R ^2$ and by $\{\varepsilon ^* _1, \varepsilon ^*_2\}$ its dual basis. By definition of the family $\mathcal S ^{2,3}_{\mathrm{straight}}$, the weights $\varpi _{1}, \varpi _2, \varpi _3$ of $\alpha$ generate $(\mathbf R ^2)^*$ and belong to a line disjoint from $0$. By precomposing $\alpha$ by an element of $\mathrm{GL}_2 (\mathbf R)$ and postcomposing by a permutation of the diagonal entries, if necessary, we can ensure that:
\begin{itemize}
\item the $\varpi _i$'s belong to the vertical line $\Delta$ passing through $-\varepsilon _1 ^*$,
\item they admit  $-\varepsilon _1 ^*$ as center of mass,
\item the algebraic distances between them on $\Delta$ (oriented by $\varepsilon _2 ^*$) satisfy: $0 \leqslant \varpi _2-\varpi _3 \leqslant \varpi _1-\varpi _2$; {\it i.e.\!} $\varpi _{3}, \varpi _2, \varpi _1$ lie in this order on $\Delta$, and $\varpi _2$ is closer to $\varpi _3$ than to $\varpi _1$. 
\end{itemize}
In other words, there exist $\mu _1, \mu _2, \mu _3 \in \mathbf R$, not all equal, so that the weights can be written  
$\varpi _i = -\varepsilon ^*_1 + \mu _i \varepsilon ^*_2$, for $i=1,2,3$, with:
\begin{equation}\label{mu_eqn}
\sum _{i=1} ^3 \mu _i =0~~\mathrm{~~and~~}~~~ 0 \leqslant \mu _2-\mu _3  \leqslant\mu _1-\mu _2.
\end{equation}

To sum up the above reduction, we have established:
\begin{proposition} \label{S_mu-prop} There exists $D _\mu = \mathrm{diag}(\mu _1, \mu _2, \mu _3) \in \mathrm{Diag}(\mathbf R ^3)$, a non-zero diagonal matrix, unique up to a positive multiplicative constant, enjoying the relations (\ref{mu_eqn}),
such that, up to precomposition by an element of $\mathrm{GL}_2 (\mathbf R)$ and postcomposition by a permutation of the diagonal entries, 
the Lie group morphism $\alpha$ can be written:
$$\alpha(t,s) = e^{-t I_3 + s D_\mu}, ~\mathrm{for~~every}~~ (t,s) \in \mathbf R ^2.$$
Moreover, with the notation of Theorem \ref{S_mu-thm}, one has: 
$$p_\alpha = 1 + \frac{\mu _1-\mu _3}{\mu _1-\mu _2}.$$
\end{proposition}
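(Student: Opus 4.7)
The statement is essentially a packaging of the normalization carried out in the paragraphs preceding it, so the plan is: (i) convert the weight decomposition $\varpi_i = -\varepsilon_1^* + \mu_i \varepsilon_2^*$ into the exponential form for $\alpha$, (ii) trace the residual degrees of freedom to establish uniqueness up to a positive scalar, and (iii) translate the defining ratio of $p_\alpha$ from the $\varpi_i$'s to the $\mu_i$'s.

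For (i), I would differentiate $\alpha = \exp \circ \mathrm{diag}(\varpi_1, \varpi_2, \varpi_3)$ at the origin in the basis $\{\varepsilon_1, \varepsilon_2\}$, obtaining $d\alpha(\varepsilon_j) = \mathrm{diag}(\varpi_i(\varepsilon_j))_{i=1,2,3}$. Substituting $\varepsilon_1$ gives $d\alpha(\varepsilon_1) = -I_3$, and substituting $\varepsilon_2$ gives $d\alpha(\varepsilon_2) = D_\mu$. By linearity of $d\alpha$ one then has $\alpha(t,s) = \exp(-tI_3 + s D_\mu)$. The three operations used in the earlier reductions — change of basis of $\mathbf{R}^2$, permutation of the diagonal coordinates of $\mathbf{R}^3$, and a possible reflection of $\varepsilon_2$ — are respectively a precomposition by $\mathrm{GL}_2(\mathbf{R})$, a postcomposition by $S_3 \subset \mathrm{GL}_3(\mathbf{R})$, and a further $\mathrm{GL}_2(\mathbf{R})$-precomposition, so all of them preserve the isomorphism class of $S_\alpha$.

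For (ii), the vector $\varepsilon_1$ is canonical: it is the unique solution of $\varpi_i(\varepsilon_1) = -1$ for all $i$ (existence because any two weights span $(\mathbf{R}^2)^*$, and collinearity of the third weight with the first two forces the third identity). Since $(\varpi_1 + \varpi_2 + \varpi_3)(\varepsilon_1) = -3 \neq 0$, the sum $\varpi_1 + \varpi_2 + \varpi_3$ is non-zero and its kernel is a line in $\mathbf{R}^2$; the vector $\varepsilon_2$ must lie on this line, so it is determined up to a non-zero real factor. After sorting $\mu_1 \geq \mu_2 \geq \mu_3$ with the $S_3$-freedom, condition (5.1) is equivalent to $\mu_2 \leq 0$, since $\mu_1 - \mu_2 \geq \mu_2 - \mu_3$ rewrites as $\mu_2 \leq (\mu_1 + \mu_3)/2 = -\mu_2/2$; reflecting $\varepsilon_2$ if needed pins down the sign of the residual scalar, leaving only a positive multiplicative factor, which rescales $D_\mu$ by the same factor. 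This yields the uniqueness statement.

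For (iii), from $\varpi_i - \varpi_j = (\mu_i - \mu_j)\varepsilon_2^*$ one reads off that the algebraic distances between the weights on $\Delta$ oriented by $\varepsilon_2^*$ are precisely the differences $\mu_i - \mu_j$, so the formula $p_\alpha = 1 + (\varpi_1 - \varpi_3)/(\varpi_1 - \varpi_2)$ from Theorem \ref{S_mu-thm} transcribes verbatim to $p_\alpha = 1 + (\mu_1 - \mu_3)/(\mu_1 - \mu_2)$. There is no genuine obstacle here — the argument is essentially bookkeeping — and the only point requiring some care is the sign adjustment that realises condition (5.1) in step (ii).
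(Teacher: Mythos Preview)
Your proposal is correct and follows the same approach as the paper, which presents the proposition as a summary of the normalization described in the preceding bullet points; you simply make the verification (exponential form, residual $\mathrm{GL}_2$-freedom, sign fixing, and the translation of $p_\alpha$) more explicit than the paper does. The only minor point to tidy is that after reflecting $\varepsilon_2$ you must re-sort via $S_3$ before the inequalities in (5.1) hold, but you clearly have this in mind.
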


We assume from now on that the expression of $\alpha$ is as in the previous proposition.
Let consider the following subgroups of $S _\alpha$:
$$R:=\mathbf R \ltimes _{-I_3} \mathbf R^3 \quad \hbox{\rm and} \quad H_\mu := \mathbf R\ltimes _{D_\mu} \mathbf R^3.$$ 
The group $R$ is naturally isometric to $\mathbb H ^4_{\mathbf R}$. 
Let $\frak r$ and $\frak h_\mu $ be their Lie algebras.
Let $(0, D_\mu)$ and $(0, -I_3)$ denote the derivations of $\frak r$ and $\frak h_\mu $ that trivially extend $-I_3$ and $D_\mu$. 
Then $S_\alpha$ admits two decompositions, namely:
$$S_\alpha~=~ \mathbf R \ltimes_{(0, D_\mu)}  R \quad \hbox{\rm and} \quad S_\alpha ~=~ \mathbf R \ltimes _ {(0, -I_3)} H_\mu.$$
We denote again by $\xi$ the left-invariant vector field on $R$ carried by the $\mathbf R$-factor, and by $\pi$ the projection map from $R$ onto $\mathbf R ^3$. 

The proof of Theorem \ref{S_mu-thm} will mainly rely on the decomposition $S_\alpha = \mathbf R \ltimes_{(0, D_\mu)}  R$, in combination with the description of the $L^p$-cohomology of $R \simeq \mathbb H ^4_{\mathbf R}$ given in Theorem \ref{real-thm} and Proposition \ref{real-prop}. We will use the realization of de Rham $L^p$-cohomology by means of currents, which gives the Banach space isomorphism:
$$L^p {\rm H}^2_{\rm dR}(R)\simeq \mathcal Z^{p,2}(R, \xi),$$
where $\mathcal Z^{p,2}(R, \xi)$ is the space of closed $2$-currents $\psi$ on $R$, invariant under the flow $(\varphi_t)$ of $\xi$ and such that $\Vert \psi \Vert_{\Psi^{p,2}} < + \infty$.
According to Proposition \ref{boundary-prop}, every $\psi \in \mathcal Z ^{p,2} (M, \xi)$ can be written $\psi = \pi ^* (T)$ for some $T \in \mathcal D '^{2}(\mathbf R^3) \cap \Ker d$.

\subsection{First cohomological observations} \label{S_mu-obs} 
We derive from previous results some preliminary observations on $L^p \mathrm{H}_{\mathrm{dR}}^2 (S_\alpha)$ whose statements do not depend on the morphism $\alpha$.
The notations are the same as in the previous section.

\begin{proposition}\label{S_mu-obs1} One has $L^p \mathrm{H}_{\mathrm{dR}}^2 (S_\alpha) = \{0\}$ for $p < \frac{3}{2}$.
\end{proposition}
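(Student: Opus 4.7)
The plan is to apply Theorem \ref{Lie-thm}(1) to the second decomposition of $S_\alpha$ given in Section \ref{ss - construction}, namely
$$S_\alpha = \mathbf R \ltimes_{(0,-I_3)} H_\mu.$$
First I would verify that this decomposition fits the framework of Section \ref{s - Lie}: the derivation $\delta := (0, -I_3) \in \mathrm{Der}(\mathfrak h_\mu)$ has (real) eigenvalues $0, -1, -1, -1$, all with non-positive real parts, and $\mathrm{trace}(\delta) = -3 < 0$, as required. Note in particular that this decomposition is the ``correct'' one here because the alternative decomposition $S_\alpha = \mathbf R \ltimes_{(0,D_\mu)} R$ involves a derivation of trace $\mu_1+\mu_2+\mu_3 = 0$, which fails the hypothesis of Section \ref{s - Lie}; by contrast, when we contract by the factor $\mathbf R$ with derivation $-I_3$, the hyperbolic factor absorbs all the non-unimodularity.

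Next I would read off the numerical invariants of Theorem \ref{Lie-thm}. Here $n = \dim H_\mu = 4$, and the real parts of the eigenvalues of $-\delta$, listed in non-decreasing order with multiplicities, are
$$\lambda_1 = 0, \quad \lambda_2 = \lambda_3 = \lambda_4 = 1,$$
so that $h = \mathrm{trace}(-\delta) = 3$. For the degree $k = 2$ we have $W_2 = \lambda_4 + \lambda_3 = 2$, hence $\tfrac{h}{W_2} = \tfrac{3}{2}$. Theorem \ref{Lie-thm}(1) then says that $L^p \mathrm H^2_{\mathrm{dR}}(S_\alpha) = \{0\}$ whenever $p < \tfrac{h}{W_2} = \tfrac{3}{2}$, which is the desired conclusion.

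There is essentially no obstacle here: the result is a direct application of the vanishing half of the strip decomposition, once the right choice of $1$-parameter contracting factor has been made. The only point worth emphasizing is the observation above concerning the choice of decomposition, since it is the asymmetry between $\delta_0 = (0,-I_3)$ and $(0, D_\mu)$ that allows one to invoke Theorem \ref{Lie-thm} at all.
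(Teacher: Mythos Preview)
Your proof is correct and follows essentially the same approach as the paper: both use the decomposition $S_\alpha = \mathbf R \ltimes_{(0,-I_3)} H_\mu$, compute the eigenvalues $0,1,1,1$ of $-\delta$, obtain $h=3$ and $W_2=2$, and invoke Theorem \ref{Lie-thm}(1). Your additional remark explaining why the other decomposition $S_\alpha = \mathbf R \ltimes_{(0,D_\mu)} R$ is unsuitable here (trace zero) is a helpful clarification not made explicit in the paper.
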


\begin{proof} One has $S_\alpha = \mathbf R \ltimes _\delta H_\mu$, with $\delta = (0, -I_3) \in \mathrm{Der}(\frak h_\mu)$. The ordered list of eigenvalues of $-\delta$ enumerated with multiplicity, is
$$\lambda _1 =0< \lambda _2 = \lambda _3 =\lambda _4 =1.$$
Thus, with the notations of Section \ref{s - Lie}, the trace of $-\delta$ is $h=3$, and one has $W _{2} = \lambda _3 + \lambda _4=2$. Therefore the statement follows from Theorem \ref{Lie-thm}(1).
\end{proof}

 \begin{proposition}\label{S_mu-obs2} For $p \in (\frac{3}{2}; 3)$, the Banach space $\mathcal Z^{p,2}(R, \xi)$ is non-zero, and there exists a linear isomorphism
$$L^p \mathrm{H}_{\mathrm{dR}}^2 (S_\alpha ) \simeq \Bigl\{\pi ^* T \in \mathcal Z^{p,2}(R, \xi) : \int _{\mathbf R} \Vert \pi ^* {e^{sD_\mu}}^* T \Vert^p _{\Psi ^{p,2}(R)} ds < +\infty \Bigr\}.$$
\end{proposition}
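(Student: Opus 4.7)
The first assertion follows immediately from Theorem \ref{real-thm} applied to $R \simeq \mathbb H^4_{\mathbf R}$ with $n = 3$, $k = 2$: since $(n/k, n/(k-1)) = (3/2, 3)$ coincides with our interval, $L^p \mathrm H^2_{\mathrm{dR}}(R)$ is Hausdorff, canonically Banach-isomorphic to $\mathcal Z^{p,2}(R, \xi)$ via $[\omega] \mapsto \omega_\infty$, and non-zero (the classes $[\pi^* d\theta]$ being dense in $\mathcal Z^{p,2}(R,\xi)$).

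For the isomorphism in the second assertion I plan to use the decomposition $S_\alpha = \mathbf R \ltimes_{(0, D_\mu)} R$ from Section \ref{ss - construction}. A crucial simplification is that $\mathrm{tr}(D_\mu) = 0$ (from (\ref{mu_eqn})), so the left-invariant Haar measure on $S_\alpha$ factorises as $ds \otimes d\mathrm{vol}_R$ and the left-invariant vector field $\eta = \partial/\partial s$ generates an isometric flow $\psi_s$. Given $\omega \in \Omega^{p,2}(S_\alpha) \cap \Ker d$, the isometric identification of the slice $\{s\} \times R$ with $R$ via $\psi_s$ produces a family $\omega_s \in \Omega^{p, 2}(R) \cap \Ker d_R$ defined for almost every $s$, with Fubini yielding $\|\omega\|_{L^p(S_\alpha)}^p \asymp \int_\mathbf R(\|\omega_s\|^p_{L^p(R)} + \|\omega_{0,s}\|^p_{L^p(R)}) ds$, where $\omega_{0,s}$ is the analogously defined $1$-form obtained from the $ds$-part of $\omega$.

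Via the identification of Theorem \ref{real-thm}, each $\omega_s$ yields $(\omega_s)_\infty \in \mathcal Z^{p,2}(R, \xi)$; the closedness of $\omega$ together with the fact that $\psi_s$ intertwines the slice structure with the automorphism $\Xi_s: (t, x) \mapsto (t, e^{sD_\mu} x)$ of $R$ identifies $(\omega_s)_\infty$ with $\pi^*(e^{sD_\mu})^* T$ for a base current $T$ attached to $[\omega]$. Fubini then translates $\omega \in L^p(S_\alpha)$ into the integrability $\int_\mathbf R \|\pi^*(e^{sD_\mu})^* T\|^p_{\Psi^{p,2}(R)} ds < +\infty$; conversely, such a $T$ can be lifted to the class of a closed $L^p$ $2$-form on $S_\alpha$ assembled from its $\mathbf R$-orbit $\{(e^{sD_\mu})^*T\}_{s \in \mathbf R}$.

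The main obstacle will be establishing the bijectivity of this correspondence cleanly. The cleanest route proceeds through an $L^p$-spectral sequence for the fibration $R \to S_\alpha \to \mathbf R$: the vanishing $L^p \mathrm H^1_{\mathrm{dR}}(R) = 0$ for $p < 3$ (Theorem \ref{real-thm}) annihilates the $E_2^{1,1}$-term and $L^p \mathrm H^0_{\mathrm{dR}}(R) = 0$ kills $E_2^{2,0}$, so $L^p \mathrm H^2_{\mathrm{dR}}(S_\alpha)$ reduces to $E_2^{0,2}$, which is precisely the space of $L^p$-integrable sections of the $\mathbf R$-orbit inside $L^p \mathrm H^2_{\mathrm{dR}}(R) \simeq \mathcal Z^{p,2}(R,\xi)$, matching the integrable subspace in the proposition. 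Rigorously constructing this spectral sequence in the de Rham $L^p$-framework and verifying its collapse—reconciling in particular the $\psi_s$-equivariance on $S_\alpha$ with the non-isometric $\Xi_s^*$-action on $R$—is the technical heart of the argument.
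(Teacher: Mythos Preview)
Your approach is essentially the same as the paper's: use the decomposition $S_\alpha = \mathbf R \ltimes_{(0,D_\mu)} R$, invoke Theorem~\ref{real-thm} to see that $L^p\mathrm H^k_{\mathrm{dR}}(R)$ vanishes for all $k\neq 2$ and is Hausdorff non-zero for $k=2$ in the range $p\in(\tfrac32,3)$, and then collapse a Hochschild--Serre type spectral sequence for the extension so that $L^p\mathrm H^2_{\mathrm{dR}}(S_\alpha)$ is identified with the $L^p$-integrable part of the $\mathbf R$-orbit inside $L^p\mathrm H^2_{\mathrm{dR}}(R)\simeq\mathcal Z^{p,2}(R,\xi)$. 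The paper simply cites \cite[Corollary 6.10]{BR2} for this spectral-sequence step and then transports the conclusion through the Banach isomorphism of Theorem~\ref{real-thm}(2) and the boundary-value description of Proposition~\ref{boundary-prop}; you are sketching that machinery rather than quoting it, which is why your argument looks longer.

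One correction to your sketch: the claim that $\psi_s$ is an \emph{isometric} flow is wrong. The condition $\mathrm{tr}(D_\mu)=0$ gives volume preservation, but the adjoint action $\mathrm{Ad}_{(s,1)}$ on $\mathfrak r$ is $e^{s(0,D_\mu)}$, which is not orthogonal, so right translation by $(s,1)$ is not an isometry of $S_\alpha$. This does not damage the argument --- the spectral-sequence formalism (or equivalently \cite[Corollary 6.10]{BR2}) only needs the semidirect-product structure and the boundedness of $\psi_s^*$ on each $L^p\Omega^k$, not isometry --- but you should drop the word ``isometric'' and with it the attempt to read off the Fubini identity directly from metric slicing. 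The correct bookkeeping is exactly what you allude to at the end: one works at the level of cohomology classes, where $e^{s(0,D_\mu)}{}^*$ acts on $L^p\mathrm H^2_{\mathrm{dR}}(R)$, and the integrability condition is on the norms of these classes, not on pointwise slice norms of a representative form.
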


\begin{proof} When $p \in (\frac{3}{2}; 3)$, Theorem \ref{real-thm} shows that $L^p \mathrm{H}_{\mathrm{dR}}^2 (R)$ is non-zero and Hausdorff, and that $L^p \mathrm{H}_{\mathrm{dR}}^k (R) = \{0\}$ in all degrees $k\neq 2$. Since $S_\alpha = \mathbf R \ltimes _{(0, D_\mu)} R$, the above description of the cohomology of $R$, in combination with a Hochschild-Serre spectral sequence argument (see \cite[Corollary 6.10]{BR2}), yields the following linear isomorphism
$$L^p \mathrm{H}_{\mathrm{dR}}^2 (S_\alpha) \simeq \Bigl\{[\omega ] \in L^p \mathrm{H}_{\mathrm{dR}}^2 (R) : \int _{\mathbf R} \Vert {e^{s(0, D_\mu)}}^* [\omega] \Vert^p _{L^p \mathrm{H}^ 2(R)} ds < +\infty \Bigr\}.$$
By Theorem \ref{real-thm}(2), the Banach spaces $L^p \mathrm{H}_{\mathrm{dR}}^2 (R)$ and $\mathcal Z^{p,2}(R, \xi)$ are isomorphic. 
Moreover every $\psi \in \mathcal Z ^{p,2} (R, \xi)$ can be written $\psi = \pi ^* (T)$ for some $T \in \mathcal D '^{2}(\mathbf R^3) \cap \Ker d$ (see Proposition \ref{boundary-prop}). This leads to the desired linear isomorphism.
\end{proof}

\begin{proposition}\label{S_mu-obs3} For $p>3$, the space $L^p \overline{\mathrm{H}_{\mathrm{dR}}^2} (S_\alpha)$ is non-zero. 
\end{proposition}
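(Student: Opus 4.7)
The approach is to pass to the Poincar\'e-dual degree and then combine a Hausdorff statement, a spectral-sequence identification, and an explicit construction. Set $q := p/(p-1) \in (1, \frac{3}{2})$. By Proposition \ref{dR-Pd}, the Banach spaces $L^p \overline{{\rm H}^2_{\rm dR}}(S_\alpha)$ and $L^q \overline{{\rm H}^3_{\rm dR}}(S_\alpha)$ are dual, so it suffices to exhibit a non-zero element in $L^q \overline{{\rm H}^3_{\rm dR}}(S_\alpha)$. Using the decomposition $S_\alpha = \mathbf R \ltimes_{(0, -I_3)} H_\mu$, the eigenvalues of $-(0, -I_3)$ on $\mathfrak h_\mu$ are $0, 1, 1, 1$, which yields $h = 3$, $W_2 = 2$, $W_3 = 3$. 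Theorem \ref{Lie-thm}(2) in degree $k = 3$ then gives that $L^q {\rm H}^3_{\rm dR}(S_\alpha)$ is Hausdorff on the whole interval $(1, \frac{3}{2})$; hence $L^q \overline{{\rm H}^3_{\rm dR}}(S_\alpha) = L^q {\rm H}^3_{\rm dR}(S_\alpha)$.

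I now switch to the decomposition $S_\alpha = \mathbf R \ltimes_{(0, D_\mu)} R$ of Section \ref{ss - construction}. Theorem \ref{real-thm} ensures that for $q \in (1, \frac{3}{2})$ the cohomology $L^q {\rm H}^\ast_{\rm dR}(R)$ is Hausdorff and concentrated in degree~$3$. The same Hochschild--Serre argument as in Proposition \ref{S_mu-obs2} (\emph{cf.} \cite[Corollary 6.10]{BR2}) therefore yields the linear isomorphism
$$L^q {\rm H}^3_{\rm dR}(S_\alpha) \simeq \Bigl\{[\omega] \in L^q {\rm H}^3_{\rm dR}(R) : \int_{\mathbf R} \Vert (e^{s(0, D_\mu)})^* [\omega] \Vert^q_{L^q {\rm H}^3(R)} \, ds < +\infty \Bigr\}.$$
Proposition \ref{boundary-prop}, together with the fact that every $3$-current on the $3$-manifold $\mathbf R^3$ is automatically closed, identifies $L^q {\rm H}^3_{\rm dR}(R)$ with the pullbacks $\pi^* T$ for $T \in \mathcal D'^3(\mathbf R^3)$ with finite $\Psi^{q,3}$-norm on $R$. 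The task reduces to producing a non-zero such $T$ whose orbit under $(e^{sD_\mu})^*$ is $L^q$-integrable in $s$.

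I propose to take $\psi \in C_c^\infty(\mathbf R^3)$ with $\partial_1 \partial_2 \partial_3 \psi \not\equiv 0$ and set $T := \partial_1 \partial_2 \partial_3 \psi \cdot dx_1 \wedge dx_2 \wedge dx_3$. The crucial feature of this choice is that $T$ admits \emph{three} compactly supported primitives $\theta_i = \pm (\partial_j \partial_k \psi) \, dx_j \wedge dx_k$, one along each coordinate direction $\{i,j,k\}=\{1,2,3\}$. Since $\mathrm{trace}\, D_\mu = \mu_1 + \mu_2 + \mu_3 = 0$ (Proposition \ref{S_mu-prop}), the Jacobian of $e^{sD_\mu}$ equals~$1$, so $\Vert (e^{sD_\mu})^* T \Vert_{L^q \Omega^3(\mathbf R^3)}$ is constant in $s$, while Lemma \ref{Lie-lemma} gives $\Vert (e^{sD_\mu})^* \theta_i \Vert_{L^q \Omega^2(\mathbf R^3)} \asymp e^{-s\mu_i} \Vert \theta_i \Vert_{L^q}$. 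Feeding these estimates into Proposition \ref{Lie-inequalities} and optimizing over the parameter $t$ (noting that the two exponents $3 - 3/q$ and $3/q - 2$ differ by~$1$), one obtains
$$\Vert \pi^* ((e^{sD_\mu})^* T) \Vert_{\Psi^{q,3}(R)} \leq C \min_{i \in \{1,2,3\}} e^{-s \mu_i (3 - 3/q)}.$$
The construction of Section \ref{ss - construction} forces $\mu_1 > 0 > \mu_3$, so using $i = 1$ for $s \geq 0$ and $i = 3$ for $s \leq 0$ gives two-sided exponential decay; the integral $\int_{\mathbf R} \Vert \pi^* ((e^{sD_\mu})^* T) \Vert^q_{\Psi^{q,3}(R)} \, ds$ is therefore finite. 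Since $T \not= 0$ and the boundary-value map of Proposition \ref{dR-identification}(2) is a Banach isomorphism, $[\pi^* T] \in L^q {\rm H}^3_{\rm dR}(R)$ is non-zero and provides the required non-zero class in $L^q {\rm H}^3_{\rm dR}(S_\alpha)$. The main obstacle is this last estimate: obtaining decay on \emph{both} sides of $\mathbf R$ is precisely what forces $T$ to be chosen as a triple iterated derivative of a bump, so that compactly supported primitives are available along every coordinate direction.
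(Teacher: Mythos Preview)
Your argument is correct and takes a genuinely different route from the paper. The paper's proof is a two-line invocation of \cite[Theorem~B and Corollary~3.4]{BR2}: with the decomposition $S_\alpha=\mathbf R\ltimes_{(0,-I_3)}H_\mu$ one has $h=3$, $w_2=\lambda_1+\lambda_2=1$, and the cited black-box result immediately gives $L^p\overline{\mathrm H^2_{\mathrm{dR}}}(S_\alpha)\neq\{0\}$ for $p>h/w_2=3$.

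Your approach, by contrast, stays entirely within the machinery developed in this paper. You dualise to degree~$3$, verify Hausdorffness via Theorem~\ref{Lie-thm}(2), run the spectral-sequence identification of Proposition~\ref{S_mu-obs2} in degree~$3$ for $q\in(1,\tfrac32)$, and then build an explicit integrable class. The key observation---that a triple iterated partial derivative of a bump admits a compactly supported primitive along each coordinate axis, so that whichever sign of $s$ one faces there is a primitive $\theta_i$ with $\mu_i$ of the right sign---is exactly the degree-$3$ analogue of the trick in Section~\ref{ss - non-vanishing} (and, at bottom, the same mechanism underlying the proof of \cite[Theorem~B]{BR2} for this group). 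What the paper's proof buys is brevity and generality (the cited result covers much wider classes of solvable groups); what your proof buys is self-containment and an explicit witness for the non-vanishing, which fits the spirit of the surrounding sections.
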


\begin{proof} Consider again $\lambda _1 =0< \lambda _2 = \lambda _3 = \lambda _4 =1$ the list of the eigenvalues of $-\delta = - (0, -I_3) \in \mathrm{Der}(\frak h_\mu)$. The trace of $-\delta$ is $h=3$, and one has $w _{2} = \lambda _{1} + \lambda _{2} = 1$. 
Since the rank of $S_\alpha$ is equal to $2$, it follows from \cite[Theorem B and Corollary 3.4]{BR2} that $L^p \overline{\mathrm{H}_{\mathrm{dR}}^2} (S_\alpha )$ is non-zero for $p>\frac{h}{w _{2}}=3$.
\end{proof}

\subsection{Non-vanishing of the second $L^p$-cohomology}
\label{ss - non-vanishing}
We wish to establish the non-vanishing part of Theorem \ref{S_mu-thm}. 
Thanks to Proposition \ref{S_mu-obs3}, we just need to prove that $L^p \mathrm{H}_{\mathrm{dR}}^2 (S_\alpha) \neq \{0\}$ for $p \in (p_\alpha; 3)$. 

Assume $p \in (\frac{3}{2}; 3)$. By Proposition \ref{S_mu-obs2} there is a linear isomorphism 
$$L^p \mathrm{H}_{\mathrm{dR}}^2 (S_\alpha) \simeq \Bigl\{\pi ^* T \in \mathcal Z^{p,2}(R, \xi) : \int _{\mathbf R} \Vert {\pi ^* e^{sD_\mu }}^* T \Vert^p _{\Psi ^{p,2}(R)} ds < +\infty \Bigr\}.$$ Recall from Theorem \ref{real-thm}(2) that the space $\mathcal Z^{p,2}(R, \xi)$ contains the forms $\pi ^* d\theta$, with $\theta \in \Omega^1_c({\mathbf R}^3)$.
Therefore in order to show that $L^p \mathrm{H}_{\mathrm{dR}}^2 (S_\alpha) $ is non-zero, it is enough to exhibit a non-zero form $d \theta$ with $\theta \in \Omega ^1 _c (\mathbf R ^3)$ and $\Vert {\pi ^* e^{sD_\mu }}^* d\theta \Vert _{\Psi ^{p,2}(R)} \to 0$ exponentially fast when $s \to \pm \infty$.

Let $\theta$ be a smooth compactly supported $1$-form on $\mathbf R ^3$. From Proposition \ref{real-prop}, with $\theta$ replaced by ${e ^{s D_\mu}}^* \theta$, we have: 
\begin{align*}
\Vert \pi ^* {e ^{s D_\mu}}^* d\theta  \Vert_{\Psi ^{p,2}(R)} 
\leqslant 
C \inf _{t \in \mathbf R} \bigl\{e^{-(2-{3\over p})t}& \Vert  {e ^{s D_\mu}}^* d\theta \Vert _{L^p \Omega ^2({\bf R}^3)} \\+ &~~e^{(-1+{3 \over p})t} \Vert  {e ^{s D_\mu}}^*\theta \Vert _{L^p \Omega ^{1}({\bf R}^3)}\bigr\}.
\end{align*}
Write $\theta = f dx + g dy + h dz$, so that $d\theta = F dy \wedge dz + G dx \wedge dz + H dx \wedge dy$. 
Since the trace of $D_\mu$ is zero, Lemma \ref{Lie-lemma} applied with $H = \mathbf R ^3$ and $\delta = D_\mu = \mathrm{diag}(\mu _1, \mu _2, \mu _3)$ gives the following estimates: 
$$ \Vert {e ^{s D_\mu}}^* d\theta \Vert_{L^p \Omega ^2} \asymp e^{-s\mu _1} \Vert F \Vert_{L^p} + e^{-s\mu_2} \Vert G \Vert_{L^p} + e^{-s\mu_3} \Vert H \Vert_{L^p},$$
$$ \Vert {e ^{s D_\mu}}^*\theta \Vert _{L^p \Omega ^1} \asymp e^{s\mu_1} \Vert f \Vert_{L^p} + e^{s\mu_2} \Vert g \Vert_{L^p} + e^{s\mu_3} \Vert h \Vert_{L^p}.$$
We denote by $\alpha_\pm$ the exponent of the leading term in the asymptotics of $\Vert {e^{s D_\mu}}^* d\theta \Vert $ when $s \to \pm\infty$, namely 
$\Vert {e^{s D_\mu}}^* d\theta \Vert \asymp_{s \to \pm\infty} e^{\alpha_\pm s}$; note that $\alpha_+$ and $\alpha_-$ are opposites of diagonal coefficients of $D_\mu$ since ${\rm trace}(D_\mu)=0$. 
Similarly, we denote by $\beta_\pm$ the exponent of the leading term in the asymptotics of $\Vert {e^{s D_\mu}}^* \theta \Vert $ when $s \to \pm\infty$, namely 
$\Vert {e^{s D_\mu}}^* \theta \Vert \asymp_{s \to \pm\infty} e^{\beta_\pm s}$; note that $\beta_+$ and $\beta_-$ are diagonal coefficients of the matrix $D_\mu$. One has:

\begin{lemma} 
\label{lemma - num}
Let $a,b>0$ be positive real numbers and let $\alpha, \beta \in {\mathbf R}$. 
We assume that $A=A(s) \asymp e^{\alpha s}$ and $B=B(s) \asymp e^{\beta s}$ when $s \to +\infty$ (resp. when $s \to -\infty$). 
Then $\inf_{t \in {\mathbf R}} \{ e^{-at} A + e^{bt} B \} \asymp e^{{a\beta + b\alpha \over a+b}s}$, when $s \to +\infty$ (resp. when $s \to -\infty$). 
In particular, the infimum tends to $0$ if and only if we have $(a\beta + b\alpha) s \to -\infty$, when $s \to +\infty$  (resp. when $s \to -\infty$); in which case the speed of convergence to $0$ is exponential. 
\end{lemma}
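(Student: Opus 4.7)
The plan is a direct optimization in $t$ followed by asymptotic tracking of the minimum value. First I would fix $s$ (large positive, the case $s \to -\infty$ being symmetric) and treat $A=A(s)$ and $B=B(s)$ as positive constants. The function $t \mapsto e^{-at}A + e^{bt}B$ is strictly convex on $\mathbf R$ and tends to $+\infty$ at both ends, hence attains its infimum at the unique critical point $t^* = t^*(s)$ given by the first-order condition
$$-a e^{-at^*}A + b e^{bt^*}B = 0, \qquad \text{i.e.\ } e^{(a+b)t^*} = \frac{aA}{bB}.$$
Substituting $t^*$ back, a short computation yields
$$e^{-at^*}A = \left(\tfrac{b}{a}\right)^{a/(a+b)} A^{b/(a+b)} B^{a/(a+b)}, \quad e^{bt^*}B = \left(\tfrac{a}{b}\right)^{b/(a+b)} A^{b/(a+b)} B^{a/(a+b)},$$
so that
$$\inf_{t \in \mathbf R} \bigl\{e^{-at}A + e^{bt}B\bigr\} = C(a,b) \cdot A^{b/(a+b)} B^{a/(a+b)},$$
with $C(a,b) = (b/a)^{a/(a+b)} + (a/b)^{b/(a+b)} > 0$ depending only on $a$ and $b$.

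Next, I would inject the hypotheses $A \asymp e^{\alpha s}$ and $B \asymp e^{\beta s}$ as $s \to +\infty$: by definition there exist positive constants $c_1, c_2$ such that $c_1 e^{\alpha s} \leqslant A(s) \leqslant c_2 e^{\alpha s}$ and similarly for $B(s)$, for $s$ large enough. Raising to the (positive) powers $b/(a+b)$ and $a/(a+b)$ and multiplying preserves the $\asymp$ relation, so
$$A^{b/(a+b)} B^{a/(a+b)} \asymp e^{\frac{b\alpha + a\beta}{a+b}\, s}$$
as $s \to +\infty$, and multiplying by the $s$-independent constant $C(a,b)$ gives the claimed $\inf \asymp e^{\frac{a\beta + b\alpha}{a+b}s}$. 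The case $s \to -\infty$ is entirely analogous (the minimizer $t^*(s)$ simply lives in the opposite half-line, but the closed-form expression of the minimum is unchanged).

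Finally, since the exponential function has no zero, the infimum tends to $0$ along $s \to +\infty$ (resp. $s \to -\infty$) if and only if $\frac{a\beta+b\alpha}{a+b}\, s \to -\infty$, i.e.\ iff $(a\beta+b\alpha)s \to -\infty$; and in that case the decay is exponential with rate $\bigl|\frac{a\beta+b\alpha}{a+b}\bigr|$. The only mildly delicate point is ensuring that the $\asymp$ estimates on $A$ and $B$ pass through the exponentiation step, which is fine because the exponents $b/(a+b)$ and $a/(a+b)$ are fixed positive numbers; there is no real obstacle here, as the lemma is purely an elementary calculus statement.
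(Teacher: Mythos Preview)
Your proof is correct and follows essentially the same approach as the paper's: optimize in $t$ for fixed $A,B>0$ via the first-order condition, then substitute the asymptotics $A\asymp e^{\alpha s}$, $B\asymp e^{\beta s}$. Your presentation is slightly cleaner in that you write the minimum in the closed form $C(a,b)\,A^{b/(a+b)}B^{a/(a+b)}$, whereas the paper tracks $t_{\min}$ implicitly through the relation $\frac{A}{B}=\frac{b}{a}e^{(a+b)t_{\min}}$, but the argument is the same.
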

\begin{proof} 
Assume first that $A,B>0$ are fixed and consider the function $f$ defined by $f(t) = e^{-{at}} A + e^{bt} B$. 
We have $\lim_{t \to \pm\infty} f(t) = +\infty$ so $f$ achieves its minimum at a point $t_{\rm min}$ such that $f'(t_{\rm min}) = 0$. 
Since $f'(t) = -ae^{-{at}} A + be^{bt} B$, we have ${A \over B} = {b \over a} e^{(a+b)t_{\rm min}}$ and therefore the minimal value of $f$ is: 
$$f(t_{\rm min})=Be^{bt_{\rm min}} ({A \over B}e^{-(a+b)t_{\rm min}} + 1 ) = B e^{bt_{\rm min}}({b \over a} + 1).$$
When $A \asymp e^{\alpha s}$ and $B \asymp e^{\beta s}$, we have $e^{(\alpha - \beta)s} \asymp {A \over B} = {b \over a} e^{(a+b)t_{\rm min}}$. 
Then $f(t_{\rm min}) \asymp e^{\beta s} e^{b t_{\rm min}} \asymp e^{(\beta + b {\alpha - \beta \over a+b})s}= e^{{a\beta + b\alpha \over a+b}s}$. 
\end{proof}
For $p \in (\frac{3}{2}; 3)$, with $a= 2-{3 \over p}$ and $b = -1 + {3 \over p}$ in the above lemma, we obtain the following estimate when $s \to \pm\infty$: 
$$\Vert {\pi ^* e^{sD_\mu }}^* d\theta \Vert_{\Psi ^{p,2}(R)} \lesssim e^{{a\beta_\pm + b\alpha_\pm \over a+b}s}=e^{\{(2p-3)\beta_\pm + (-3+p)\alpha_\pm \}\frac{s}{p}}.$$
This shows that for the condition $\int _{\mathbf R} \Vert {\pi ^* e^{sD_\mu }}^* T \Vert^p _{\Psi ^{p,2}(R)}
ds < +\infty$ to be satisfied by $T = d\theta$, it is sufficient to have: 
\begin{equation}\label{S_mu-ineq}
(2p-3)\beta_+ + (3-p)\alpha_+  < 0 ~~\mathrm{~~~and~~~}~~ (2p-3)\beta_- + (3-p)\alpha_- > 0.
\end{equation}
To exhibit such a form $d \theta$, we will use the 

\begin{lemma}
There exist forms $\theta = f dx$ and $\Theta = g dy + h dz$ in $\Omega^1_c({\bf R}^3)$, such that $d\theta = d\Theta = G dx \wedge dz + H dx \wedge dy \neq 0$.
\end{lemma}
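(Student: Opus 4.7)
The plan is to realize the lemma by an explicit construction using separated variables. The shape of $d\theta$ is forced: if $\theta=f\,dx$, then $d\theta=-\partial_y f\,dx\wedge dy-\partial_z f\,dx\wedge dz$, so automatically no $dy\wedge dz$ component appears. The question is therefore whether we can, at the same time, express the same $2$-form as $d\Theta$ with $\Theta=g\,dy+h\,dz$ compactly supported. Expanding $d(g\,dy+h\,dz)$ gives the constraints
\[
\partial_x g=-\partial_y f,\qquad \partial_x h=-\partial_z f,\qquad \partial_y h-\partial_z g=0.
\]
To eliminate the $dy\wedge dz$ obstruction cleanly, I would take the ansatz $f(x,y,z)=\phi(x)\psi(y,z)$. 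Then the first two equations are solved by setting $g=-\Phi(x)\partial_y\psi$ and $h=-\Phi(x)\partial_z\psi$ for any primitive $\Phi$ of $\phi$, and with this choice the third equation is automatic since $\partial_y\partial_z\psi=\partial_z\partial_y\psi$.

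The remaining issue is compact support of $\Theta$. Since $\psi\in C^\infty_c(\mathbf R^2)$, compactness of $\mathrm{supp}(g)$ and $\mathrm{supp}(h)$ amounts to compactness of $\mathrm{supp}(\Phi)$. I would therefore choose $\phi\in C^\infty_c(\mathbf R)$ with $\int_{\mathbf R}\phi=0$ and $\phi\not\equiv0$, so that $\Phi(x)=\int_{-\infty}^x\phi(t)\,dt$ is compactly supported but non-trivial; concretely, $\phi=\chi'$ for any non-constant bump function $\chi\in C^\infty_c(\mathbf R)$ works. For $\psi$, I would take any $\psi\in C^\infty_c(\mathbf R^2)$ with $d\psi\not\equiv0$, for instance a non-constant bump function; this guarantees that at least one of $\partial_y\psi,\partial_z\psi$ is non-zero somewhere, hence $d\theta\neq0$.

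A short direct computation then confirms $d\Theta=-\phi(x)\partial_y\psi\,dx\wedge dy-\phi(x)\partial_z\psi\,dx\wedge dz=d\theta$, with no residual $dy\wedge dz$ term (the two contributions involving $\Phi(x)\partial_y\partial_z\psi$ from $\partial_z g\,dz\wedge dy$ and $\partial_y h\,dy\wedge dz$ cancel). Both $\theta$ and $\Theta$ are smooth and compactly supported, and $d\theta=d\Theta\neq0$ as required. There is no real obstacle here: the only subtle point is arranging $\Phi$ to be compactly supported, which is handled by the vanishing-integral condition on $\phi$. In other words, the statement is essentially an exercise in de Rham theory with compact supports, and the separated-variables ansatz converts it into the one-dimensional fact that a compactly supported function with zero integral admits a compactly supported primitive.
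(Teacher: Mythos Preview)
Your proof is correct. Your separated-variables construction $f=\phi(x)\psi(y,z)$, $g=-\Phi\,\partial_y\psi$, $h=-\Phi\,\partial_z\psi$ does work, and the zero-integral condition on $\phi$ is exactly what guarantees compact support of $\Phi$.

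The paper takes a cleaner and more conceptual route. It observes that the equation $d\theta=d\Theta$ is equivalent to $\theta-\Theta$ being closed, hence (on $\mathbf R^3$) exact. So one simply picks any non-zero $u\in C^\infty_c(\mathbf R^3)$, splits $du=\partial_x u\,dx+\partial_y u\,dy+\partial_z u\,dz$, and sets $\theta:=\partial_x u\,dx$ and $\Theta:=-\partial_y u\,dy-\partial_z u\,dz$. Then $\theta-\Theta=du$, so $d\theta=d\Theta$ by $d^2=0$; and since $\theta$ contains only $dx$, no $dy\wedge dz$ term can appear. Your construction is in fact the special case $u(x,y,z)=\Phi(x)\psi(y,z)$ of this one. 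What the paper's viewpoint buys is that it bypasses the PDE system and the zero-integral maneuver entirely: compact support of $\theta$ and $\Theta$ is automatic because $u$ itself is compactly supported, with no auxiliary primitive needed.
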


\begin{proof}
Let $u$ be an arbitrary non-zero function in $\Omega _c ^0 (\mathbf R ^3)$. Its differential is $du = {\partial u\over \partial x} dx+ {\partial u\over \partial y}dy + {\partial u \over \partial z}dz$. Set $\theta:= {\partial u\over \partial x}dx$ and $\Theta := -{\partial u\over \partial y}dy -{\partial u \over \partial z}dz$. Since $ddu =0$,
one has 
$d\theta = d\Theta = -{\partial^2 u \over {\partial x \partial z}} dx \wedge dz -{\partial^2 u \over {\partial x \partial y}}dx \wedge dy.$
\end{proof}

Let $d\theta = d\Theta$ be as in the previous lemma. From the relations (\ref{mu_eqn}) we have $\alpha_+ = -\mu_3$ and $\alpha_- = -\mu_2$. 
Similarly we have $\beta_- = \beta_-(\theta) = \mu_1$ and $\beta_+ = \beta_+(\Theta) = \mu_2$. 
When $s \to +\infty$, the integrability conditions (\ref{S_mu-ineq}) and the relations (\ref{mu_eqn}), lead to the following condition 
$$ 
(2p-3)\mu_2 - (3-p)\mu_3 < 0,$$ 
hence
$p(2\mu_2 + \mu_3) -3\mu_2 -3\mu_3< 0$, that is $p(\mu_2 -\mu_1) + 3\mu_1 >0$, 
amounting to 
$$p > {3\mu_1 \over \mu_1 -\mu _2}  = 1 + {\mu_1 -\mu_3 \over \mu_1 -\mu_2}= p_\alpha.$$
When $s \to -\infty$, they lead to 
$$(2p-3)\mu_1 - (3-p)\mu_2 > 0,$$
hence 
$p(2\mu_1 + \mu_2) - 3 \mu_1 -3\mu_2  > 0$, that is $p(\mu_1-\mu_3) + 3 \mu_3 >0$,
amounting to 
$$p > {-3\mu_3 \over \mu_1 -\mu _3}  =1+{\mu_2 -\mu_3 \over \mu_1 -\mu_3}.$$
The latter condition is implied by the former one, since $1 +{\mu_2 -\mu_3 \over \mu_1 -\mu_3} \in [1; \frac{3}{2}]$ and $p _\alpha \in [2; 3]$. 
To sum up, we have shown that $L^p \mathrm{H}_{\mathrm{dR}}^2 (S_\alpha) \neq \{0\}$ for $p \in (p_\alpha; 3)$, as expected.

\subsection{Vanishing of the second $L^p$-cohomology}
\label{ss - Vanishing degree 2}
It remains to prove the vanishing statement in Theorem \ref{S_mu-thm}. 
It will be obtained by using a Poincar\'e duality argument, together with some estimates similar to those from the non-vanishing part. 

According to Proposition \ref{S_mu-obs1}, it is enough to consider the case $p \in (\frac{3}{2}; 3)$. We start again from the identification given in Proposition \ref{S_mu-obs2}:
$$L^p \mathrm{H}_{\mathrm{dR}}^2 (S_\alpha) \simeq \Bigl\{\pi ^* T \in \mathcal Z^{p,2}(R, \xi) : \int _{\mathbf R} \Vert {\pi ^* e^{sD_\mu }}^* T \Vert^p _{\Psi ^{p,2}(R)} ds < +\infty \Bigr\}.$$ 
To show that $L^p \mathrm{H}_{\mathrm{dR}}^2 (S_\alpha)$ vanishes, it is enough to prove that every $\pi ^* T \in \mathcal Z^{p,2}(R, \xi)$ satisfies $\Vert {\pi ^* e^{sD_\mu }}^* T \Vert _{\Psi ^{p,2}(R)} \to +\infty$, when $s \to +\infty$ or when $s \to -\infty$. 
Recall from Proposition \ref{real-prop}, that: 
$$\Vert \pi ^* T \Vert _{\Psi ^{p,2}(R)} \asymp \sup \{ T(\theta) : \theta \in \Omega^1_c({\bf R}^3) , ~\Vert \pi^*d\theta) \Vert_{\Psi ^{q,2}(R)} \leqslant 1 \},$$
where $q$ denotes the H\"older conjugate of $p$.
When replacing $\pi ^* T$ by $\pi ^* {e^{sD_\mu }}^* T$ with $s \in {\bf R}$, a change of variable provides: 
$$\Vert \pi ^* {e^{sD_\mu }}^* T \Vert _{\Psi ^{p,2}(R)} \asymp \sup \{ T(\theta) : \theta \in \Omega^1_c({\bf R}^3)   ,~ \Vert \pi^*{e^{sD_\mu }}^*d\theta) \Vert_{\Psi ^{q,2}(R)} \leqslant 1 \}.$$
By Proposition \ref{real-prop}, with $\theta$ replaced by ${e^{s D_\mu}}^* \theta$, we obtain: 
\begin{align*}
\Vert \pi ^* {e ^{s D_\mu}}^* d\theta  \Vert_{\Psi ^{q,2}(R)} 
\leqslant 
C \inf _{t \in \mathbf R} \bigl\{e^{-({3 \over p}-1)t}& \Vert  {e ^{s D_\mu}}^* d\theta \Vert _{L^q \Omega ^2({\bf R}^3)} \\+ &~~e^{(2-{3\over p})t} \Vert  {e ^{s D_\mu}}^*\theta \Vert _{L^q \Omega ^{1}({\bf R}^3)}\bigr\},
\end{align*}
where, in the upper bound, the coefficients in the exponentials in front of the norms come from the identity $2 - {3 \over q} = {3 \over p}-1$ and $1 + {3 \over q} = 2 - {3 \over p}$. 

Using Lemma \ref{lemma - num} with $a={3 \over p}-1$ and $b = 2-{3\over p}$, and keeping the notation $\alpha_\pm$ and $\beta_\pm$ defined after replacing $L^p$ norms by $L^q$ norms, we obtain 
\begin{equation}\label{S_mu-equation}
\Vert \pi ^* {e ^{s D_\mu}}^* d\theta  \Vert_{\Psi ^{q,2}(R)}\lesssim e^{(a\beta_\pm + b\alpha_\pm)s},
\end{equation}
when $s \to \pm \infty$. These observations lead to the
\begin{lemma}
Let $\pi ^* T \in \mathcal Z^{p,2}(R, \xi)$ and let $T = T_1 dy \wedge dz + T_2 dx \wedge dz + T_3 dx \wedge dy$ be its writing in the canonical global coordinates of ${\bf R}^3$, with $T_i \in \mathcal D '^0 (\mathbf R^3)$. 
\begin{itemize}
\item[{\rm (i)}] If $T_1 \neq 0$, then $\displaystyle \lim_{s \to -\infty}\Vert \pi ^* {e^{sD_\mu }}^* T \Vert _{\Psi ^{p,2}(R)} = +\infty$. 
\item[{\rm (ii)}] If $T_3 \neq 0$ and if $p < p_\alpha$, then $\displaystyle \lim_{s \to +\infty}\Vert \pi ^* {e^{sD_\mu }}^* T \Vert _{\Psi ^{p,2}(R)} = +\infty$. 
\end{itemize} 
\end{lemma}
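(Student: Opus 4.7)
The plan is to exploit the Poincaré-duality norm characterization of Proposition \ref{real-prop}; after the change of variables explained in Section \ref{ss - Vanishing degree 2} it reads
$$\Vert \pi^*(e^{sD_\mu})^* T\Vert_{\Psi^{p,2}(R)} \asymp \sup\Bigl\{T(\theta) : \theta \in \Omega_c^1(\mathbf R^3),\ \Vert \pi^*(e^{sD_\mu})^* d\theta\Vert_{\Psi^{q,2}(R)} \leq 1\Bigr\}.$$
For each part of the lemma it will suffice to produce one fixed test form $\theta_0 \in \Omega_c^1(\mathbf R^3)$ with $T(\theta_0) \neq 0$ and $\Vert \pi^*(e^{sD_\mu})^* d\theta_0\Vert_{\Psi^{q,2}(R)} \to 0$ exponentially as $s$ tends to the relevant infinity: rescaling $\theta_0$ by this vanishing norm then shows the supremum blows up.

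A key preliminary observation is that the constraints (\ref{mu_eqn}) force $\mu_1 > 0$, $\mu_2 \leq 0$, $\mu_3 < 0$. Indeed $0 \leq \mu_2 - \mu_3 \leq \mu_1 - \mu_2$ together with $\mu_1+\mu_2+\mu_3 = 0$ gives $2\mu_2 \leq \mu_1 + \mu_3 = -\mu_2$, hence $\mu_2 \leq 0$, and then $\mu_1 > 0 > \mu_3$ since the $\mu_i$ cannot all vanish. Moreover, the same relations yield the identity
$$p_\alpha = 1 + \frac{\mu_1 - \mu_3}{\mu_1 - \mu_2} = \frac{2\mu_1 - (\mu_2 + \mu_3)}{\mu_1 - \mu_2} = \frac{3\mu_1}{\mu_1 - \mu_2},$$
which will be crucial for part (ii).

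For (i), I would choose $f \in C_c^\infty(\mathbf R^3)$ with $T_1(f) \neq 0$ (possible since $T_1$ is a nonzero distribution) and set $\theta_0 := f\,dx$, so that $T(\theta_0) = T_1(f) \neq 0$ and $d\theta_0 = -\partial_y f\,dx \wedge dy - \partial_z f\,dx \wedge dz$. A nonzero compactly supported function satisfies $\partial_z f \not\equiv 0$, so Lemma \ref{Lie-lemma} gives $\beta_- = \mu_1$ for $\Vert (e^{sD_\mu})^*\theta_0\Vert_{L^q}$ and $\alpha_- = -\mu_2$ for $\Vert (e^{sD_\mu})^*d\theta_0\Vert_{L^q}$ (the $dx\wedge dz$-term beats the $dx\wedge dy$-term since $-\mu_2 > -\mu_3$ in the $s \to -\infty$ regime). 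Plugging into (\ref{S_mu-equation}) via Lemma \ref{lemma - num}, with $a = \tfrac{3}{p}-1$, $b = 2-\tfrac{3}{p}$ and $a+b=1$, produces
$$\Vert \pi^*(e^{sD_\mu})^* d\theta_0\Vert_{\Psi^{q,2}(R)} \lesssim e^{\frac{1}{p}\bigl[(3-p)\mu_1 - (2p-3)\mu_2\bigr]\,s}\qquad(s \to -\infty).$$
Since $p \in (3/2,3)$, $\mu_1 > 0$, and $\mu_2 \leq 0$, the bracketed exponent is strictly positive, so the right-hand side tends to $0$; this proves (i).

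For (ii), I would instead take $h \in C_c^\infty(\mathbf R^3)$ with $T_3(h) \neq 0$ and set $\theta_0 := h\,dz$; then $T(\theta_0) = T_3(h) \neq 0$ and $d\theta_0 = \partial_x h\,dx\wedge dz + \partial_y h\,dy\wedge dz$. Since $\partial_x h \not\equiv 0$, in the $s \to +\infty$ regime the $dx\wedge dz$-term dominates, giving $\alpha_+ = -\mu_2$ and $\beta_+ = \mu_3$. The same chain of estimates yields
$$\Vert \pi^*(e^{sD_\mu})^* d\theta_0\Vert_{\Psi^{q,2}(R)} \lesssim e^{\frac{1}{p}\bigl[(3-p)\mu_3 - (2p-3)\mu_2\bigr]\,s}\qquad(s \to +\infty).$$
Substituting $\mu_3 = -\mu_1 - \mu_2$ simplifies the bracket to $-(3-p)\mu_1 - p\mu_2$, which is strictly negative iff $p(\mu_1 - \mu_2) < 3\mu_1$, i.e., precisely when $p < p_\alpha$ by the identity above. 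Under this hypothesis the norm decays exponentially as $s \to +\infty$, proving (ii). The main obstacle is essentially bookkeeping: singling out the correct leading exponents $\alpha_\pm,\beta_\pm$ among competing terms and recognizing that the integrability threshold $3\mu_1/(\mu_1 - \mu_2)$ coincides with $p_\alpha$.
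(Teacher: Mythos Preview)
Your proof is correct and follows the same approach as the paper: choose test forms $\theta_0=f\,dx$ and $\theta_0=h\,dz$, identify the leading exponents $\alpha_\pm,\beta_\pm$, and feed them into (\ref{S_mu-equation}) via Lemma~\ref{lemma - num}. One tiny slip: in your parenthetical for part~(i) you write ``since $-\mu_2>-\mu_3$'', but in fact $-\mu_2\leq -\mu_3$ (from $\mu_2\geq\mu_3$); it is precisely because $-\mu_2$ is the \emph{smaller} coefficient that $e^{-\mu_2 s}$ dominates as $s\to -\infty$, so your conclusion $\alpha_-=-\mu_2$ is right and the rest of the argument is unaffected.
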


\begin{proof}
(i). Let $\theta$ be of the form $\theta = f dx$ with $f$ a smooth compactly supported function such that $T_1(f)=1$. 
Then $T(\theta) = T_1(f) = 1$. 
We have $d\theta = -{\partial f \over \partial y} dx \wedge dy - {\partial f \over \partial z} dx \wedge dz$. 
Therefore with the relations (\ref{mu_eqn}), one has $\alpha_- = -\mu_2$ and $\beta_- = \mu_1$. 
We consider the quantity 
$$p(a \beta_- + b \alpha_-) = (3-p)\mu_1 - (2p-3)\mu_2 = -p(\mu_2 - \mu_3) -3\mu_3.$$
Since $p \in ({3 \over 2};3)$ and since $\mu_3 \leqslant \mu_2 \leqslant 0$, one has  $-p(\mu_2 - \mu_3) -3\mu_3 \geqslant -3\mu_2 \geqslant 0$. We deduce that $a \beta_- + b \alpha_- >0$. 
Relation (\ref{S_mu-equation}) then implies that $\displaystyle \lim_{s \to -\infty}\Vert \pi ^* {e ^{s D_\mu}}^* d\theta  \Vert_{\Psi ^{q,2}(R)}= 0$, hence that 
$\displaystyle \lim_{s \to -\infty} \Vert \pi ^* {e^{sD_\mu }}^* T \Vert _{\Psi ^{p,2}(R)}= +\infty$. 

(ii). Let $\theta$ be of the form $\theta = h dz$ with $h$ a smooth compactly supported function such that $T_3(h)= 1$. 
Then $T(\theta) = T_3(h) = 1$. 
We have $d\theta = {\partial h \over \partial x} dx \wedge dz + {\partial h \over \partial y} dy \wedge dz$. 
Thus $\alpha_+ = -\mu_2$ and $\beta_+ = \mu_3$. 
We consider the quantity 
$$p(a \beta_+ + b \alpha_+) = (3-p)\mu_3 - (2p-3)\mu_2= p(\mu_1 - \mu_2) -3\mu_1.$$ 
One has $a \beta_+ + b \alpha_+ <0$ if and only if 
$$p < {3\mu_1 \over \mu_1 -\mu _2}  = 1 + {\mu_1 -\mu_3 \over \mu_1 -\mu_2}= p_\alpha.$$ 
Thus for $p < p_\alpha$, we have $a \beta_+ + b \alpha_+ < 0$. 
Relation (\ref{S_mu-equation}) then implies that $\displaystyle \lim_{s \to +\infty}\Vert \pi ^* {e ^{s D_\mu}}^* d\theta  \Vert_{\Psi ^{q,2}(R)}= 0$, hence that 
$\displaystyle \lim_{s \to +\infty} \Vert \pi ^* {e^{sD_\mu }}^* T \Vert _{\Psi ^{p,2}(R)}= +\infty$. 
\end{proof} 

We can now turn to the 
\begin{proof}[Proof of $L^p \mathrm{H}_{\mathrm{dR}}^2 (S_\alpha)=\{0\}$ for $p \in (\frac{3}{2}; p_\alpha)$]
Let $\pi ^* T \in \mathcal Z^{p,2}(R, \xi)$ be in correspondence with a non-zero class in $L^p {\rm H}^2_{\rm dR}(S_\alpha)$. 
By Item (i) in the previous lemma, we must have $T_1 = 0$, which implies that both $T_2$ and $T_3$ are $\neq 0$ because $T$ is closed. 
By Item (ii) in the previous lemma, we must have $p \geqslant  p_\alpha$. 
This proves the vanishing by contraposition. 
\end{proof}

\section{Complex hyperbolic spaces}  
\label{s - complex hyp sp}
Another family of concrete examples for which a strip decomposition as stated in Theorem \ref{Lie-thm} can be derived, is provided by the so-called complex hyperbolic spaces. Theorem \ref{complex-thm} below describes the regions where the cohomology vanishes or not. It also states that the cohomology is Hausdorff. The proof of the (non-)vanishing statement relies on Theorem \ref{Lie-thm}, in combination with some additional analysis on Heisenberg groups that is developed in Section \ref{complex-diff}. The Hausdorff statement is a deep result due to Pansu \cite[Th\'eor\`eme 1]{Pa09}. We refer to his paper for a proof. The section ends with some complementary results (Propositions \ref{null-prop} and \ref{dense-prop}) that provide a finer description of the cohomology.

Let $\mathrm{Heis}(2m-1)$ be the Heisenberg group of dimension $2m-1$ ($m\geqslant 2$), {\it i.e.}\! the simply connected nilpotent Lie group whose Lie algebra $\frak n$ admits 
$$X_1, \dots, X_{m-1}, Y_1, \dots, Y_{m-1}, Z$$ 
as a basis, and where the only non-trivial relations between the above generators are $[X_i, Y_i] = Z$, for all $i \in \{1, \dots, m-1\}$.
Let $R = {\mathbf R} \ltimes_\delta \!N$ with $N=\mathrm{Heis}(2m-1)$ and $\delta = - \mathrm{diag}(1, \dots, 1, 2) \in \mathrm{Der}(\frak n)$. Then $R$ is a solvable Lie group isometric to the complex hyperbolic space $\mathbb H^{m} _{\bf C}$.  

Apart from the density statement and the (non-)vanishing statement in Items (2) and (3) -- when $m \geqslant 3$ --, the following result already appears in \cite{Pa99,Pa09}.

\begin{theorem}\label{complex-thm} 
Let $k\in \{1, \dots , 2m-1\}$. One has: 
\begin{enumerate}
 \item $L^p \mathrm{H}^k _{\mathrm{dR}}(R) =\{0\}$ for $1< p<\frac{2m}{k+1}$ or $p>\frac{2m}{k-1}$.
 \item If $\frac{2m}{k+1} < p< \frac{2m}{k}$, then 
 $L^p \mathrm{H}^{k} _{\mathrm{dR}}(R)$ is Hausdorff and Banach isomorphic to $\mathcal Z^{p,k}(R, \xi)$. Moreover $L^p \mathrm{H}^{k} _{\mathrm{dR}}(R) \neq \{0\}$ if and only if $k\geqslant m$.
 \item If $\frac{2m}{k} < p< \frac{2m}{k-1}$, then $L^p \mathrm{H}^{k} _{\mathrm{dR}}(R)$ is Hausdorff and
the classes of the $d(\chi \cdot \pi ^* \theta)$'s 
(where $\theta \in \Omega _c ^{k-1}(N)$) form a dense subspace. Moreover $L^p \mathrm H _{\mathrm{dR}} ^{k} (R)
\neq \{0\}$ if and only if $k\leqslant m$.
\end{enumerate}
\end{theorem}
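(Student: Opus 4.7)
With $\delta = -\mathrm{diag}(1,\dots,1,2)$, the real-part eigenvalues of $-\delta$ on $\frak n$ are $\lambda_1 = \cdots = \lambda_{2m-2} = 1$ and $\lambda_{2m-1} = 2$, so $h = 2m$, $W_k = k+1$ for $k \geqslant 1$, and $w_k = k$ for $k \leqslant 2m-2$. Substituting into Theorem \ref{Lie-thm} immediately yields: Item (1) in full; the Hausdorff property together with the isomorphism $L^p \mathrm{H}^k_{\mathrm{dR}}(R) \simeq \mathcal Z^{p,k}(R,\xi)$ of Item (2); and the density of the classes $[d(\chi \cdot \pi^*\theta)]$ inside $L^p \overline{\mathrm H_{\mathrm{dR}}^{k}}(R)$ asserted in Item (3). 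The Hausdorff assertion of Item (3) is not accessible from Theorem \ref{Lie-thm} and I would import it from Pansu's \cite[Théorème 1]{Pa09}.

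What is left is the (non-)vanishing in Items (2) and (3), which I would fold into one another by Poincaré duality. The Item (3) strip $2m/k < p < 2m/(k-1)$ at degree $k$ is sent, under $q = p/(p-1)$, to the strip $2m/(k'+1) < q < 2m/k'$ at degree $k' = 2m-k$, which is exactly the Item (2) strip at degree $k'$; moreover the condition $k \leqslant m$ becomes $k' \geqslant m$. Since both strips lie inside the Hausdorff region, Proposition \ref{dR-Pd} identifies $L^p \mathrm{H}^k_{\mathrm{dR}}(R)$ and $L^q \mathrm{H}^{k'}_{\mathrm{dR}}(R)$ as topological duals, so one of them vanishes iff the other does. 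The whole theorem therefore reduces to proving, inside the Item (2) strip $2m/(k+1) < p < 2m/k$, that $\mathcal Z^{p,k}(R,\xi) \neq \{0\}$ when $k \geqslant m$ and $\mathcal Z^{p,k}(R,\xi) = \{0\}$ when $k \leqslant m-1$.

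By Proposition \ref{boundary-prop}, every element of $\mathcal Z^{p,k}(R,\xi)$ is the pull-back $\pi^*T$ of a closed $k$-current $T$ on $N = \mathrm{Heis}(2m-1)$, and finiteness of the $\Psi^{p,k}$-norm on $R$ translates, via Lemma \ref{Lie-lemma} and the grading of forms by flow weight, into a weighted integrability requirement on $T$. Under the contracting flow the horizontal generators scale by $e^{-t}$ and the central generator by $e^{-2t}$, so a basis monomial $k$-form on $N$ has weight $k$ or $k+1$ according to whether it omits or contains $dz$. The de Rham differential does not respect this grading, so the correct tool is Rumin's complex $(E_0^\bullet, d_c)$ on $N$, which does and whose middle-degree differential has order two rather than one -- this is the feature that produces the threshold $k=m$. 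Using Pansu's $L^p$-Poincaré inequalities adapted to Rumin forms \cite{Pa09}, one can represent $T$ by a closed Rumin $k$-current of controlled weight, and a weight/dimension count in $E_0^\bullet$ shows that such currents with the required $L^p$-integrability exist precisely when $k \geqslant m$. In the complementary range $k \leqslant m-1$, the non-existence is made effective by pairing $\pi^*T$ against the dense family of classes from Item (3) via Corollary \ref{boundary-norm} and checking that every such pairing vanishes, forcing $T = 0$.

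The hard step is therefore the Heisenberg-level analysis: constructing explicit closed Rumin $k$-currents with the right weight and $L^p$-integrability when $k \geqslant m$, and excluding them when $k \leqslant m-1$, both of which turn on the order-two jump of $d_c$ at the middle degree $m$. Everything else reduces to bookkeeping with Theorem \ref{Lie-thm}, Pansu's Hausdorff theorem and Poincaré duality.
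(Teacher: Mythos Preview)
Your framework is correct and matches the paper's: the computation $h=2m$, $W_k=k+1$, $w_k=k$ plugged into Theorem \ref{Lie-thm} handles Item (1), the Hausdorff/isomorphism part of (2), and the density in (3); Pansu's theorem supplies the Hausdorff property for (3); and Poincar\'e duality collapses the (non-)vanishing of (2) and (3) to a single question on $\mathcal Z^{p,k}(R,\xi)$ in the Item (2) strip. This is exactly the paper's reduction.

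Where you diverge is in the Heisenberg-level analysis. You invoke the full Rumin complex $(E_0^\bullet, d_c)$ and Pansu's $L^p$-Poincar\'e inequalities from \cite{Pa09}, but the paper avoids this machinery entirely. Instead it works directly with the weight decomposition $\Omega^k(N)=\Omega_1^k\oplus\Omega_2^k$ and the Lefschetz map $L_k:\alpha\mapsto\alpha\wedge d\tau$ on $\wedge^k\frak n_1^*$. For the vanishing (dually, in the Item (3) strip at degree $\ell\geqslant m+1$), the surjectivity of $L_{\ell-3}$ lets one write any $\theta$ as $\alpha\wedge d\tau+\beta\wedge\tau$, and a direct weight estimate via Proposition \ref{Lie-inequalities} shows $[d(\chi\cdot\pi^*\theta)]=0$ (Lemma \ref{complex-lem1}). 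For the non-vanishing at $k\geqslant m$, the paper explicitly constructs $\varphi\in\Omega_c^{k-1}(N)$ with $d\varphi$ vertical and non-zero: for $k>m$ from $\Ker L_{k-2}\neq 0$, and for $k=m$ by using that $L_{m-2}$ is an isomorphism to solve a single linear equation for the vertical component, with non-triviality checked by integrating over a horizontal copy of $\partial\mathbb H^m_{\mathbf R}$ (Lemma \ref{complex-lem2}). Then $\pi^*(d\varphi)\in\mathcal Z^{p,k}(R,\xi)$ because a vertical form has pure weight $k+1$ and $p>\frac{2m}{k+1}$.

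Your Rumin-complex route would work, and the ``order-two jump of $d_c$ at degree $m$'' is indeed the same phenomenon as the Lefschetz isomorphism at $m-2$, but your description of how to execute it (``weight/dimension count'', ``represent $T$ by a closed Rumin current of controlled weight'') is not yet a proof. The paper's Lefschetz argument is both more elementary and fully self-contained; you should carry out the construction explicitly rather than deferring to \cite{Pa09}, whose Poincar\'e inequalities are not actually needed for the (non-)vanishing.
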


We notice that the above statement will be complemented in Section \ref{complex-complement}: Proposition \ref{null-prop} will describe the zero-elements among the classes $[d(\chi \cdot \pi ^* \theta)]$'s, while Proposition \ref{dense-prop} will exhibit a natural dense subspace in  $\mathcal Z^{p,k}(R, \xi)$.

\begin{proof}[Beginning of proof of Theorem \ref{complex-thm}]
For $k\in \{0, \dots , 2m-2\}$, one has $h =2m$, $w_k = k$, and $w_{2m-1}= h \geqslant 2m-1$. Similarly $W_k 
= k+1$ for $k\in \{1, \dots , 2m-1\}$ and $W_0 =0 \le 1$.
Item (1) comes from Theorem \ref{Lie-thm}(1). 
The first part of Item (2) follows from Theorem \ref{Lie-thm}(2). 
The Hausdorff statement in Item (3) is a deep theorem of Pansu \cite[Th\'eor\`eme 1]{Pa09}, in combination with Poincar\'e duality (Proposition \ref{dR-Pd}).
The density property follows from Theorem \ref{Lie-thm}(3).

It remains to establish the (non-)vanishing parts of Items (2) and (3). It will require more material, and the proof will be completed only at the end of the section. 
\end{proof}

\subsection{Differential forms on the Heisenberg group} \label{complex-diff}
We complete the proof of the (non-)vanishing statements in Theorem \ref{complex-thm}. 
They rely on two lemmata (Lemma \ref{complex-lem1} and \ref{complex-lem2} below). 
The material is inspired by Rumin's paper \cite{Rumin}.

Recall that $\frak{n}$ denotes the Lie algebra of $N= \mathrm{Heis}(2m-1)$. It decomposes as $\frak{n} = \frak{n}_1 \oplus \frak{n}_2$, where $\frak{n}_1 :=\mathrm{Span}(X_1, \dots, X_{m-1}, Y_1, \dots, Y_{m-1})$ and $\frak{n}_2 :=\mathrm{Span}(Z)$ are respectively the eigenspaces of $-\delta$ corresponding to the eigenvalues $1$ and $2$.

Let $x_1, \dots, x_{m-1}, y_1, \dots, y_{m-1}, z$ be the coordinates on $N$ induced by the exponential map $\frak{n} \to N$. Let $\tau := dz -\frac{1}{2}\sum _{i=1}^{m-1} (x_i dy_i - y_i dx_i)$.
We identify  $\frak{n}^*$ with the space of the left-invariant $1$-forms on $N$. One has $\frak{n}^* = \frak{n}_1^* \oplus \frak{n}_2 ^*$, where $\frak{n}_1^* :=\mathrm{Span}(dx_1, \dots, dx_{m-1}, dy_1, \dots, dy_{m-1})$ and $\frak{n}_2^* :=\mathrm{Span}(\tau)$ are the eigenspaces of $-\delta$ of eigenvalues $1$ and $2$. 

The form $d\tau = -\sum _{i=1}^{m-1} dx_i \wedge dy_i$ is a symplectic form when restricted to $\frak{n}_1$. 
Therefore the Lefschetz map 
\begin{equation}\label{Lefschetz}
L_k : \wedge ^k \frak{n}_1^* \to \wedge ^{k+2} \frak{n}_1^*,
~~\alpha \mapsto \alpha \wedge d\tau,
\end{equation}
is injective for $k \leqslant m-2$ and surjective for $k\geqslant m-2$,
see \cite[Proposition 1.1]{BBG}.

The weight decomposition  $\wedge ^k \frak{n}^* = \wedge ^k \frak{n}_1^* \oplus \wedge ^{k-1} \frak{n}_1^* \wedge \tau$
associated to $-\delta$, yields a decomposition
$$\Omega ^k  (N) = \Omega ^k _{1} \oplus \Omega ^k _{2}
,$$
with $\Omega ^k _{2} = \Omega ^{k-1} _{1} \wedge \tau$. 
Therefore, every $\theta \in \Omega ^k(N)$ decomposes uniquely as
$$\theta = \theta_1 + \theta _2\wedge \tau, ~~\mathrm{with}~~
\theta_1 \in \Omega ^k _{1}~~\mathrm{and}~~
\theta_2 \in \Omega ^{k-1} _{1}.$$
The form $\theta _1$ is said to be \emph{horizontal} and of \emph{pure weight} $k$.
The form $\theta _2 \wedge \tau$ is said to be \emph{vertical} and of \emph{pure weight} $k+1$.
We will use the indices ${}_1$ and ${}_2$ to specify the components of differential forms according to the above direct sum. 

When $\frac{2m}{k} < p< \frac{2m}{k-1}$, and according to what we have already proved for Theorem \ref{complex-thm}(3), the space $L^p \mathrm{H}^{k} _{\mathrm{dR}}(R)$ is Hausdorff and contains the dense subspace 
$\{[d(\chi \cdot \pi ^* \theta)] : \theta \in \Omega _c ^{k-1}(N)\}$. One has in addition:
 
\begin{lemma}
\label{complex-lem1}
Suppose $k\in \{2, \dots ,  2m-1\}$ and $\frac{2m}{k} < p< \frac{2m}{k-1}$.
\begin{enumerate}
\item Let $\theta \in \Omega _c ^{k-1}(N)$. If $\theta = \alpha \wedge d\tau + \beta \wedge \tau$, with $\alpha \in \Omega _c ^{k-3}(N)$ and $\beta \in \Omega _c ^{k-2}(N)$, then $[d(\chi \cdot \pi ^*\theta)] =0$ in $L^p {\mathrm H _{\mathrm{dR}} ^{k}} (R)$.
\item For $k\geqslant m+1$, one has $L^p {\mathrm H _{\mathrm{dR}} ^{k}} (R) = \{0\}$.
\end{enumerate}
\end{lemma}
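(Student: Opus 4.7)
The plan is to prove both items by exploiting the weight decomposition of forms on the Heisenberg group $N$: forms of sufficiently high weight on $N$ pull back to forms on $R$ which already sit in $L^p \Omega(R)$ for $p$ in the assumed range, and therefore represent the zero class in $L^p$-cohomology.

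For item~(1), I would first eliminate the $\alpha \wedge d\tau$ piece. The Leibniz rule gives
$$\alpha \wedge d\tau = (-1)^{k-3}\bigl(d(\alpha \wedge \tau) - d\alpha \wedge \tau\bigr),$$
so setting $\eta := (-1)^{k-3}\alpha \wedge \tau \in \Omega_c^{k-2}(N)$ and $\tilde{\beta} := \beta - (-1)^{k-3}d\alpha \in \Omega_c^{k-2}(N)$ one gets $\theta = d\eta + \tilde{\beta}\wedge\tau$. Since $p \in (\frac{2m}{k},\frac{2m}{k-1}) = (\frac{h}{w_k},\frac{h}{w_{k-1}})$, Corollary \ref{boundary-cor} applies and kills the exact piece, so it suffices to treat $\theta = \tilde{\beta}\wedge\tau$; since $\tau\wedge\tau = 0$, only the horizontal component of $\tilde{\beta}$ contributes, and I may assume $\tilde{\beta} \in \Omega^{k-2}_1(N)$. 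The key point is then that $\tilde{\beta}\wedge\tau$ has pure weight $k$ under $-\delta^*$: the pullback to $R$ of a left-invariant weight-$w$ form on $N$ has pointwise norm scaling like $e^{-wt}$, while $d\mathrm{vol}_R$ scales like $e^{2mt}\,dt\wedge d\mathrm{vol}_N$, so $\int_0^{\infty} e^{(2m-pk)t}\,dt$ converges precisely when $p>2m/k$. Moreover $d(\tilde{\beta}\wedge\tau) = (d\tilde{\beta})_1\wedge\tau + (-1)^{k-2}\tilde{\beta}\wedge d\tau$ splits into a piece of weight $k{+}1$ and a piece of weight $k$, both integrable in the same range. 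Consequently $\chi\cdot\pi^*(\tilde{\beta}\wedge\tau)$ lies in $\Omega^{p, k-1}(R)$, which forces $[d(\chi\cdot\pi^*(\tilde{\beta}\wedge\tau))] = 0$ in $L^p\mathrm{H}^k_{\mathrm{dR}}(R)$.

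For item~(2) I would combine (1) with the density statement of Theorem~\ref{Lie-thm}(3) and Pansu's Hausdorff theorem \cite{Pa09}. Any $\theta \in \Omega_c^{k-1}(N)$ decomposes as $\theta = \theta_1 + \theta_2\wedge\tau$ with $\theta_1 \in \Omega_1^{k-1}(N)$ and $\theta_2 \in \Omega_1^{k-2}(N)$. When $k\geqslant m+1$, one has $k-3 \geqslant m-2$, so the Lefschetz map $L_{k-3}$ of (\ref{Lefschetz}) is surjective; fixing any linear right-inverse and applying it pointwise produces $\alpha \in \Omega_c^{k-3}(N)$ with $\alpha\wedge d\tau = \theta_1$ (the compact support is preserved because the right-inverse is a fibrewise linear operation). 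Then $\theta = \alpha\wedge d\tau + \theta_2\wedge\tau$ satisfies the hypotheses of (1), so $[d(\chi\cdot\pi^*\theta)] = 0$ for every $\theta \in \Omega_c^{k-1}(N)$. Since such classes are dense in the Hausdorff Banach space $L^p\mathrm{H}^k_{\mathrm{dR}}(R)$, the latter must vanish.

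The main technical obstacle is the weight/$L^p$ bookkeeping in (1): one must check that both $\chi\cdot\pi^*(\tilde{\beta}\wedge\tau)$ and its differential lie in $L^p \Omega^{*}(R)$, which amounts to verifying the two thresholds $p>2m/k$ (from the weight-$k$ pieces $\tilde{\beta}\wedge\tau$ and $\tilde{\beta}\wedge d\tau$) and $p>2m/(k+1)$ (from the weight-$(k+1)$ piece $(d\tilde{\beta})_1\wedge\tau$); the former forces the latter, so everything fits inside the assumed range. The cleanness of the argument relies on the preliminary reductions, which ensure $\tilde{\beta}$ is horizontal, so that $\tilde{\beta}\wedge\tau$ is of pure weight (not a sum of mixed weights) -- otherwise the integrability condition would have to be verified component by component and could fail at the most unfavourable weight.
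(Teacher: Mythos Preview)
Your proof is correct and follows the same overall architecture as the paper: reduce the $\alpha\wedge d\tau$ term to a $\tau$-term via the Leibniz rule and Corollary~\ref{boundary-cor}, handle the vertical piece by a weight computation, and for (2) combine Lefschetz surjectivity with density and Hausdorffness.

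There is one difference in the treatment of the vertical piece $\tilde\beta\wedge\tau$ that is worth recording. The paper invokes Proposition~\ref{Lie-inequalities} to bound $\bigl\Vert [d(\chi\cdot\pi^*\theta)]\bigr\Vert_{L^p\mathrm{H}^k}$ by $\inf_t\{\Vert(e^{t\delta})^*\theta\Vert_{L^p}+\Vert(e^{t\delta})^*d\theta\Vert_{L^p}\}$ and then lets $t\to+\infty$, using the homogeneity/flow argument built into that proposition. You instead show directly that $\chi\cdot\pi^*(\tilde\beta\wedge\tau)\in\Omega^{p,k-1}(R)$, so that the class is manifestly exact with an explicit $L^p$ primitive; this is exactly the device the paper itself uses in the proof of Lemma~\ref{complex-lem2}(2) for the dual strip. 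Both arguments rest on the same estimate $\Vert(e^{t\delta})^*(\tilde\beta\wedge\tau)\Vert_{L^p}\asymp e^{(-k+2m/p)t}$ from Lemma~\ref{Lie-lemma}; your route is slightly more self-contained, while the paper's reuses its general norm machinery.
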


\begin{proof}
(1). Suppose first that $\theta = \beta \wedge \tau$.
Since $p > \frac{2m}{k} \geqslant \frac{h}{w_k}$, Proposition \ref{Lie-inequalities} implies that 
$$\bigl\Vert [d(\chi \cdot \pi ^* \theta) ] \bigr\Vert_{L^p {\mathrm{H^k}}(R)} 
 \leqslant C \inf _{t \in \mathbf R} \bigl\{\Vert (e^{t\delta})^* \theta \Vert _{L^p \Omega ^{k-1}(N)} + \Vert (e^{t\delta})^* d\theta \Vert _{L^p \Omega ^{k}(N)}\bigr\}.$$
 Since $\theta$ is of pure weight $k$,
one has  $\Vert (e^{t\delta})^* \theta \Vert _{L^p \Omega ^{k-1}} \asymp e^{(-k + \frac{2m}{p})t}$ by Lemma \ref{Lie-lemma}. The weight of $d\theta$ is at least $k$, thus for $t \geqslant 0$ one has
$\Vert (e^{t\delta})^* d\theta \Vert _{L^p \Omega ^{k}} \lesssim 
e^{(-k + \frac{2m}{p})t}$. Therefore $\bigl\Vert [d(\chi \cdot \pi ^* \theta) ] \bigr\Vert_{L^p {\mathrm{H^k}}} \to 0$ when $t \to +\infty$,
and thus $[d(\chi \cdot \pi ^*\theta)] =0$ in $L^p {\mathrm H _{\mathrm{dR}} ^{k}} (R)$.

Now suppose that $\theta = \alpha \wedge d\tau$. 
We claim that there exists $\gamma \in \Omega _c ^{k-1} (N)$ such that $d\theta = d(\gamma \wedge \tau)$. By Corollary \ref{boundary-cor}, this will imply that 
$[d(\chi \cdot \pi ^* \theta) ] = [d(\chi \cdot \pi ^* (\gamma \wedge \tau)) ]$; which in turn implies that $[d(\chi \cdot \pi ^* \theta) ] =0$ from the previous case.
Let $\gamma :=-(-1)^{d^\circ\! \alpha}d\alpha$. One has 
$$d(\theta - \gamma \wedge \tau) =  d\alpha \wedge d\tau +(-1)^{d^\circ\! \alpha +1}(-1)^{d^\circ\! \alpha} d\alpha \wedge d\tau =0,$$
and so $d\theta = d(\gamma \wedge \tau)$.

(2). Let $\theta \in \Omega _c ^{k-1}(N)$. Since the Lefschetz map $L_i$, defined in (\ref{Lefschetz}), is surjective for $i \geqslant m-2$, one can write the weight decomposition of $\theta$ as
$$\theta = \alpha \wedge d\tau + \theta _2 \wedge \tau.$$
Therefore Item (1) implies that $[d(\chi \cdot \pi ^*\theta)] =0$ in $L^p {\mathrm H _{\mathrm{dR}} ^{k}} (R)$.
This in turn implies that $L^p {\mathrm H _{\mathrm{dR}} ^{k}} (R) = \{0\}$, thanks to the density of the $[d(\chi \cdot \pi ^*\theta)]$'s.
\end{proof}

The weight decomposition of $k$-forms can be extended to $k$-currents. This induces a decomposition $$\mathcal D'^k (N)= \mathcal D'^k_1 \oplus \mathcal D'^k_2,$$
with $\mathcal D'^k_2 = \mathcal D'^{k-1}_1 \wedge \tau$. Concretely, every $T \in \mathcal D'^k$ can be written uniquely as $$T= \sum _{\vert I\vert = \vert J\vert =k} T_{IJ} dx_I \wedge dy_J + \sum _{\vert K \vert + \vert L \vert = k-1} T_{KL} dx _K \wedge dy _L \wedge \tau,$$ with $T_{IJ}, T_{KL} \in \mathcal D'^{0}(N)$. Its weight decomposition is then $T = T_1 + T_2 \wedge \tau$, with $$ T_1 = \sum _{\vert I \vert + \vert J \vert = k} T_{IJ} dx_I \wedge dy_J ~~\mathrm{~~and~~} ~~ T_2 = \sum _{\vert K \vert + \vert L \vert = k-1} T_{KL} dx_K \wedge dy_L.$$
The current $T_1$ is said to be {\it horizontal}, and $T_2 \wedge \tau$ to be {\it vertical}. 
For $\theta \in \Omega ^{2m-1-k}_c (N)$, which weight decomposes as $\theta = \theta_1 + \theta_2 \wedge \tau$, one shows easily that
\begin{equation}\label{weight} T(\theta) = T_1(\theta _2 \wedge \tau) + T_2 (\tau \wedge \theta _1).
\end{equation}

In Theorem \ref{complex-thm}(2), we have seen that $L^p \mathrm{H}_{\mathrm{dR}}^k (R)$ is Banach isomorphic to $\mathcal Z^{p,k}(R, \xi)$,
for every $k \in \{1, \dots, 2m-1\}$ and $\frac{2m}{k+1} <p< \frac{2m}{k}$. Moreover we know from Proposition 2.4 that every $\psi \in \mathcal Z^{p,k}(R, \xi)$ can be written as $\psi = \pi^* T$ for some (unique) $T \in \mathcal D'^k(N)$. One has futhermore:

\begin{lemma} \label{complex-lem2} Let $k \in \{1, \dots, 2m-1\}$ and $\frac{2m}{k+1} <p< \frac{2m}{k}$.
\begin{enumerate}
\item For every $\pi ^*T \in \mathcal Z ^{p,k}(R, \xi)$ the $k$-current $T$ is vertical.
\item Conversely, if $\varphi \in \Omega ^{k-1} _c(N)$ is such that $d\varphi$ is vertical, then $\pi^*(d\varphi)$ belongs to $\mathcal Z^{p,k}(R, \xi)$.
\item We have $\mathcal Z^{p,k}(R, \xi) \neq \{0\}$ for every $k \in \{m, \dots, 2m-1\}$.
\end{enumerate}
\end{lemma}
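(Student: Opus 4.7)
My plan is to address the three items in order, using the Poincaré duality of Theorem \ref{Lie-thm}(4) together with Lemma \ref{complex-lem1}(1) for Part (1), Proposition \ref{dR-identification} for Part (2), and the Lefschetz surjectivity of $L_{k-2}$ for Part (3).

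For Part (1), the Poincaré dual of $(p,k)$ is $(q,\ell)$ with $\ell = 2m-k$ and $q\in\bigl(\frac{2m}{\ell}, \frac{2m}{\ell-1}\bigr)$, which is the density strip of Theorem \ref{complex-thm}(3) for $(q,\ell)$ to which Lemma \ref{complex-lem1}(1) applies. Combining Proposition \ref{dR-Pd}(2), Theorem \ref{Lie-thm}(4) and the Banach isomorphism $L^p \mathrm{H}^k_{\mathrm{dR}}(R)\cong\mathcal Z^{p,k}(R,\xi)$, the class of $\pi^*T$ pairs with every $[d(\chi\cdot\pi^*\theta)]\in L^q\overline{{\rm H}^\ell_{\mathrm{dR}}}(R)$ via $T(\theta)$. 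Apply this with $\theta = \alpha\wedge d\tau + \beta\wedge\tau$ (where $\alpha\in\Omega_c^{\ell-3}(N)$, $\beta\in\Omega_c^{\ell-2}(N)$): Lemma \ref{complex-lem1}(1) yields $[d(\chi\cdot\pi^*\theta)]=0$, hence $T(\alpha\wedge d\tau + \beta\wedge\tau)=0$. Setting $\alpha=0$ and invoking formula (\ref{weight}), we obtain $T_1(\beta\wedge\tau)=0$ for every horizontal $\beta\in\Omega_c^{\ell-2}(N)$. Since any horizontal $k$-current pairs trivially with horizontal test forms (the horizontal wedge of total degree $2m-1$ lies in $\wedge^{2m-1}\frak{n}_1^* = 0$), this forces $T_1 = 0$, i.e., $T = T_2\wedge\tau$ is vertical.

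For Part (2), I will apply Proposition \ref{dR-identification} to $\omega := d(\chi\cdot\pi^*\varphi) = \chi'\,dt\wedge\pi^*\varphi + \chi\cdot\pi^*d\varphi$. The first summand has compact $t$-support and hence lies in $L^p\Omega^k(R)$; the second is supported on $\{t\geq 0\}$ and, since $d\varphi$ is vertical of pure weight $k+1$, its slice $L^p$-norm on $\{s\}\times N$ is asymptotic to $e^{s(\frac{2m}{p}-(k+1))}\|d\varphi\|_{L^p(N)}$, whose exponent is negative because $p>\frac{2m}{k+1}$; hence $\chi\cdot\pi^*d\varphi\in L^p\Omega^k(R)$ as well, and $\omega\in\Omega^{p,k}(R)\cap\ker d$. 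The hypothesis of Proposition \ref{dR-identification} amounts, by Lemma \ref{Lie-equivalence}, to $p<\frac{2m}{k}$, which is satisfied, so $\varphi_t^*\omega$ converges in $\Psi^{p,k}(R)$ to an element $\omega_\infty\in\mathcal Z^{p,k}(R,\xi)$. A direct current-level computation --- using that $\chi(s+t)\to 1$ and $\chi'(s+t)\to 0$ on any test form's support as $t\to+\infty$ --- identifies $\omega_\infty = \pi^*d\varphi$.

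For Part (3), I would construct, for each $k\in\{m,\dots,2m-1\}$, a form $\varphi\in\Omega_c^{k-1}(N)$ with $d\varphi$ vertical and non-zero, so that Part (2) supplies the desired non-zero element $\pi^*d\varphi$ of $\mathcal Z^{p,k}(R,\xi)$. In the cleanest case $k=2m-1$, take $\omega_0 = dx_1\wedge dy_1\wedge\dots\wedge dx_{m-1}\wedge dy_{m-1}$ and $\varphi = f\omega_0$ with $f\in C^\infty_c(N)$ satisfying $Z(f)\not\equiv 0$; then $d\varphi = \pm Z(f)\omega_0\wedge\tau$ is vertical and non-zero because every $dx_i\wedge\omega_0$ and $dy_i\wedge\omega_0$ vanishes for degree reasons. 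For a general $k\in\{m,\dots,2m-2\}$, start from $\varphi_1 = f\omega_0$ with $\omega_0\in\wedge^{k-1}\frak{n}_1^*$ non-zero and $Z(f)\not\equiv 0$; by surjectivity of $L_{k-2}$ (ensured by $k-2\geq m-2$), a pointwise linear section produces a horizontal $\varphi_2\in\Omega_c^{k-2}(N)$ with $(-1)^{k-1}\varphi_2\wedge d\tau = (d\varphi_1)_1$, and $\varphi := \varphi_1 + \varphi_2\wedge\tau$ has $d\varphi = [(d\varphi_1)_2 + (d\varphi_2)_1]\wedge\tau$ vertical. The main obstacle is to guarantee that $d\varphi\neq 0$, i.e., to rule out the cancellation $(d\varphi_1)_2 + (d\varphi_2)_1=0$: here $(d\varphi_1)_2 = (-1)^{k-1}Z(f)\omega_0$ is a pure $Z$-derivative of $f$, while $(d\varphi_2)_1$ is a second-order horizontal differential expression in $f$ (since $\varphi_2$ depends linearly on the horizontal derivatives $X_if, Y_if$ through the fixed section of $L_{k-2}$); these belong to independent families of differential operators, and a generic choice of $f$ --- or an $f$ concentrated along the $z$-axis so that its horizontal derivatives are small --- prevents cancellation, giving $d\varphi\neq 0$ as required.
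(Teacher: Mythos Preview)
Parts (1) and (2) are correct and follow essentially the same lines as the paper's proof.

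For Part (3), your approach diverges from the paper's and leaves a genuine gap. The paper splits into two cases. For $k > m$ it does \emph{not} use surjectivity of $L_{k-2}$ but rather its non-injectivity: pick a non-zero left-invariant $\alpha \in \wedge^{k-2}\frak{n}_1^*$ with $\alpha \wedge d\tau = 0$, and set $\varphi = f\alpha\wedge\tau$ for $f \in C^\infty_c(N)$. Then $d\varphi = (d(f\alpha))_1\wedge\tau$ is automatically vertical, and its non-vanishing reduces to $(df)_1\wedge\alpha \neq 0$ for generic $f$, which is elementary. For $k = m$ the paper does use your correction $\varphi = \varphi_1 + \varphi_2\wedge\tau$ (with $L_{m-2}$ now an isomorphism), but it proves $d\varphi \neq 0$ by a geometric argument: if $d\varphi = 0$ then $\varphi = d\beta$ for some compactly supported $\beta$, hence $\int_S \varphi_1 = \int_S d\beta = 0$ for every complete horizontal $(m{-}1)$-dimensional submanifold $S \subset N$ (for instance the boundary of a totally real $\mathbb H^m_{\mathbf R} \subset \mathbb H^m_{\mathbf C}$); choosing $\varphi_1$ with $\int_S \varphi_1 \neq 0$ then forces $d\varphi \neq 0$.

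Your heuristic for ruling out the cancellation $(d\varphi_1)_2 + (d\varphi_2)_1 = 0$ is not a proof. On the Heisenberg group one has $[X_i,Y_i]=Z$, so second-order horizontal operators \emph{do} produce $Z$-derivative terms; this interaction is precisely the mechanism behind the Rumin complex, so ``independent families of differential operators'' cannot be invoked without further work. The alternative suggestion --- take $f$ ``concentrated along the $z$-axis so that its horizontal derivatives are small'' --- fails for the same reason: if $f=f(z)$ then $X_if = -\tfrac{y_i}{2}f'(z)$ and $Y_if = \tfrac{x_i}{2}f'(z)$ are not small. So the non-vanishing of $d\varphi$ for $k \in \{m,\dots,2m-2\}$ remains open in your argument, and you should either adopt the paper's kernel construction for $k>m$ and the horizontal-submanifold test for $k=m$, or supply a genuine symbol computation showing the relevant Rumin-type operator is non-zero.
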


\begin{proof}
(1). Every $T \in \mathcal D'^{2m-1}(N)$ is vertical, thus we can assume that $k \in \{1, \dots, 2m-2\}$.
Let $(q, \ell)$ be the Poincar\'e dual of $(p,k)$ relatively to $R$; it satisfies $\ell \in \{2, \dots, 2m-1\}$ and $\frac{2m}{\ell}<q< \frac{2m}{\ell -1}$.
For every $\pi^*T \in \mathcal Z ^{p,k}(R, \xi)$ and every vertical $\theta \in \Omega ^{\ell -1} _c (N)$, one has $T(\theta) =0$, thanks to Theorem \ref{Lie-thm} and Lemma \ref{complex-lem1}. According to Relation
(\ref{weight}), this implies that the weight decomposition of $T$ satisfies $T_1=0$. Thus $T$ is vertical.

(2). Let $\varphi \in \Omega ^{k-1} _c(N)$ be such that $d\varphi$ is vertical.
First, we claim that $d(\chi \cdot \pi ^*\varphi)$ belongs to $\Omega ^{p,k}(R) \cap \Ker d$.
One has $d(\chi \cdot \pi ^* \varphi) = d \chi \wedge \pi^* \varphi + \chi \cdot \pi ^* d \varphi$. Thus
$$\Vert d(\chi \cdot \pi ^*\varphi) \Vert _{\Omega ^{p,k}} = \Vert d(\chi \cdot \pi ^*\varphi) \Vert _{L^p \Omega ^{k}} \leqslant \Vert d\chi \wedge  \pi ^*\varphi \Vert _{L^p \Omega ^{k}}+\Vert \chi \cdot \pi ^*d\varphi \Vert _{L^p \Omega ^{k}}.$$
The form $d\chi \wedge  \pi ^*\varphi$ is compactly supported, thus it belongs to $L^p \Omega ^{k}(R)$. One has
$$\Vert \chi \cdot \pi ^*d\varphi \Vert ^p _{L^p \Omega ^{k}(R)} \leqslant
\Vert \mathbf 1 _{t \ge 0} \cdot \pi ^*d\varphi \Vert ^p _{L^p \Omega ^{k}(R)}
= \int _0 ^{+\infty} \Vert (e^{t \delta})^*d\varphi \Vert ^p _{L^p \Omega ^k (N)} ~dt.$$
Since $d \varphi$ is vertical, it is of pure weight $k+1$, and $\Vert (e^{t \delta})^*d\varphi \Vert  _{L^p \Omega ^k (N)} \asymp e ^{(-k-1 +\frac{2m}{p})t}$ by Lemma \ref{Lie-lemma}. Since $p>\frac{2m}{k+1}$, the above integral converges and the claim is proved.

One has $\pi ^* d\varphi = \lim _{t \to +\infty} \varphi _t ^*(d(\chi \cdot \pi ^*\varphi))$ in the sense of currents; thus $\pi ^*(d\varphi) \in \mathcal Z^{p,k}(R, \xi)$ by Proposition \ref{dR-identification}.

(3). It remains to show that there exists $\varphi \in \Omega_c ^k(N)$ such that $d \varphi$ is vertical and non-zero. We distinguish the cases $k >m$ and $k =m$.

Suppose $k >m$. Then $\Ker (L : \wedge ^{k-2} \frak n _1 ^* \to \wedge ^k \frak n_1 ^*)$ is non-zero. Let $\alpha \in \Omega  _1 ^{k-2}$ be non-zero, compactly supported and such that $\alpha \wedge d\tau =0$. For every $f \in \Omega _c ^0 (N)$ consider the form $\varphi = \varphi _2 \wedge \tau$, with $\varphi _2 := f \cdot \alpha$. Then $d \varphi$ is vertical. Moreover $d\varphi \neq 0$ for generic $f$.

Assume now that $k =m$. Then the Lefschetz map $L : \wedge ^{m-2} \frak n _1 ^* \to \wedge ^m \frak n_1 ^*$ is an isomorphism. Pick any compactly supported  $\varphi _1 \in \Omega _1 ^{m-1}$. Let $\varphi _2 \in \Omega _1 ^{m-2}$ be the unique solution of the equation: 
\begin{equation}
\label{cx-eqn} 
(d\varphi _1)_1 = -(-1)^m  \varphi _2 \wedge d\tau.
\end{equation}
and set $\varphi := \varphi_1 + \varphi_2 \wedge \tau$. Then $d \varphi$ is vertical. We claim that for generic $\varphi_1$, one has $d\varphi \neq 0$.
Indeed suppose that $d\varphi =0$. Then there exists $\beta \in \Omega _c ^{k-1}(N)$ so that $\varphi = d \beta$. Thus $\varphi _1 = (d \beta)_1$. Let $S \subset N$ be a complete horizontal submanifold of dimension $m-1$ (\textit{e.g.}\! the boundary at infinity of an isometric copy of $\mathbb H ^m _{\mathbf R}$ in $\mathbb H^m _{\mathbf C}$). 
Since $S$ is horizontal, one has by Stokes' Theorem:
$$\int _S \varphi _1 = \int _S (d\beta)_1 = \int _S d\beta = 0.$$
The claim follows.
\end{proof}

We can now conclude:

\begin{proof}[End of proof of Theorem \ref{complex-thm}]
Note that we can use Poincar\'e duality (Proposition \ref{dR-Pd}(2)) since we know at this stage that the cohomology spaces we consider are Hausdorff. 
The vanishing results in Items (2) and (3) follow from Lemma \ref{complex-lem1} and Poincar\'e duality. 
The non-vanishing ones are consequence of Lemma \ref{complex-lem2} and Poincar\'e duality.
\end{proof}

\subsection{Complement (on density) to Theorem \ref{complex-thm}}\label{complex-complement}
We establish two results (Propsitions \ref{null-prop} and \ref{dense-prop}) that complement Theorem \ref{complex-thm} and that could be useful in the future. The first one will serve partially in Section \ref{s - sl3} to study the cohomology of $\mathrm{SL}_3 (\mathbf R)/{\rm SO}_3({\mathbf R})$. The objects and notations are the same as in the previous section.

Recall from Theorem \ref{complex-thm}, that for $k \in \{1, \dots, m\}$ and $\frac{2m}{k} <p< \frac{2m}{k-1}$, the space $L^p \mathrm{H}_{\mathrm {dR}} ^k (R)$
is Hausdorff, non-zero, and admits the subspace $\{[d(\chi \cdot \pi ^* \theta)] : \theta \in \Omega _c ^{k-1} (N)\}$ as a dense subset.
The first result of the section describes the classes $[d(\chi \cdot \pi ^* \theta)]$ that are null in $L^p \mathrm{H}_{\mathrm {dR}} ^k (R)$: 

\begin{proposition}\label{null-prop}
Let $k \in \{1, \dots, m\}$ and $\frac{2m}{k} <p< \frac{2m}{k-1}$. For every $\theta \in \Omega _c ^{k-1}(N)$, the following holds:
\begin{enumerate}
\item When $k<m$, the class $[d(\chi \cdot \pi ^* \theta)]$ is null in $L^p \mathrm{H}_{\mathrm {dR}} ^k (R)$ if and only if $(d\theta)_1 = \gamma \wedge d\tau$, for some horizontal form $\gamma \in \Omega _c ^{k-2}(N)$.
\item  When $k=m$, the class $[d(\chi \cdot \pi ^* \theta)]$ is null in $L^p \mathrm{H}_{\mathrm {dR}} ^k (R)$ if and only if
$$d\Bigl( \theta - \mathcal L \bigl((d\theta)_1 \bigr) \wedge \tau \Bigr) =0,$$
where $\mathcal L : \Omega _1 ^{m} \to \Omega _1 ^{m-2}$ denotes the pointwise operator induced by the inverse of the Lefschetz isomorphism
$L_{m-2}: \wedge ^{m-2} \frak n _1^* \to \wedge ^{m} \frak n _1^*$.
\end{enumerate}
\end{proposition}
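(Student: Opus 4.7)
The plan is to combine Poincar\'e duality with the material of Section~\ref{complex-diff}---in particular Lemma~\ref{complex-lem1}, Lemma~\ref{complex-lem2}, and the weight decomposition on the Heisenberg boundary $N$---to reduce each direction of each case to an explicit computation with horizontal and vertical forms.

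For the sufficiency direction in Case (1), I would start from $(d\theta)_1 = \gamma \wedge d\tau$ with $\gamma \in \Omega_c^{k-2}(N)$ and form the correction $\theta' := \theta + (-1)^{k-1}\gamma \wedge \tau$. A direct weight decomposition shows $(d\theta')_1 = 0$, so $d\theta' = \mu \wedge \tau$ for some horizontal compactly supported $\mu \in \Omega_c^{k-1}(N)$; the identity $d^2 \theta' = 0$ then forces $\mu \wedge d\tau = 0$. Since $k < m$ gives $k-1 \leqslant m-2$, the Lefschetz map $L_{k-1}$ is injective, so $\mu = 0$ and $\theta'$ is closed. The manifold $N$ being diffeomorphic to $\mathbf{R}^{2m-1}$, the compactly supported Poincar\'e lemma (in degree $k-1 < 2m-1$) then gives $\theta' = d\eta$ with $\eta \in \Omega_c^{k-2}(N)$. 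Hence $\theta$ differs from an exact form by $(-1)^k \gamma \wedge \tau$, which is of the shape $\beta \wedge \tau$, and Corollary~\ref{boundary-cor} together with Lemma~\ref{complex-lem1}(1) yield $[d(\chi \cdot \pi^* \theta)] = 0$. In Case (2), the hypothesis expresses $\theta - \mathcal{L}((d\theta)_1) \wedge \tau$ as a closed compactly supported form of degree $m-1 < 2m-1$, hence exact, and the same combination of Corollary~\ref{boundary-cor} and Lemma~\ref{complex-lem1}(1) (applied to the vertical remainder $\mathcal{L}((d\theta)_1) \wedge \tau$) concludes.

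For the necessity direction, I would use Poincar\'e duality (Proposition~\ref{dR-Pd}(2)) together with the Hausdorff property of Theorem~\ref{complex-thm}(3) and the boundary-value formula $\int_R \omega \wedge d(\chi \cdot \pi^*\theta) = T(\theta)$ from Theorem~\ref{Lie-thm}(4) to reduce the vanishing of $[d(\chi \cdot \pi^*\theta)]$ to the condition that $T(\theta) = 0$ for every closed $(2m-k)$-current $T$ on $N$ with $\pi^*T \in \mathcal{Z}^{q,2m-k}(R,\xi)$, where $q$ is the H\"older conjugate of $p$. By Lemma~\ref{complex-lem2}(2), this family contains all $T = d\varphi$ with $\varphi \in \Omega_c^{2m-k-1}(N)$ having vertical differential. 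Stokes' formula transforms $T(\theta)$ into $\pm \int_N \varphi \wedge d\theta$, and after a weight expansion the constraint $(d\varphi_1)_1 + (-1)^{2m-k-2}\varphi_2 \wedge d\tau = 0$ couples $\varphi_1$ and $\varphi_2$, the latter being determined modulo $\ker L_{2m-k-2}$ thanks to the surjectivity of $L_{2m-k-2}$ for $k \leqslant m$. Testing first with $\varphi_1 = 0$ and $\varphi_2$ ranging over compactly supported elements of $\ker L_{2m-k-2}$ gives, via Lefschetz duality on the symplectic space $(\frak{n}_1^*, d\tau)$ (which identifies $(\ker L_{2m-k-2})^\perp$ with the image $L(\wedge^{k-2}\frak{n}_1^*)$), that $(d\theta)_1$ is pointwise of the form $\gamma \wedge d\tau$; injectivity of $L_{k-2}$ for $k \leqslant m$ ensures that $\gamma$ is smooth and compactly supported. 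In Case (2), further testing with arbitrary $\varphi_1$ (whose companion $\varphi_2$ is uniquely determined by the Lefschetz inverse $\mathcal{L}$) combined with integration by parts refines this to the strict equation $d(\theta - \mathcal{L}((d\theta)_1) \wedge \tau) = 0$.

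The main obstacle I anticipate is sign-careful bookkeeping in Case (2): one must verify that the combined constraints extracted from varying $\varphi_1$ (with $\varphi_2$ tied by $\mathcal{L}$) assemble precisely into the stated equation rather than a weaker condition involving only the horizontal part of $d\theta$. A secondary point is to confirm that the subfamily of test currents $T = d\varphi$ suffices to detect all boundary currents in $\mathcal{Z}^{q,2m-k}(R,\xi)$; this is justified a posteriori by the matching sufficiency direction, which shows that the condition extracted from this subfamily is already strong enough to force the class to vanish.
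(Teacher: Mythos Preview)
Your proposal is correct and follows the paper's strategy closely: both use Poincar\'e duality together with Theorem~\ref{Lie-thm}(4) to reduce the necessity direction to testing against currents $T=d\varphi$ with $\varphi$ having vertical differential, and both invoke the Lefschetz duality fact $(\Ker L_{\ell-2})^\perp=\mathrm{Im}\,L_{k-2}$ (which the paper isolates as a separate lemma) to conclude in Case~(1). The only notable difference is in the sufficiency direction. You argue that the corrected form $\theta'$ (in Case~(1)) or $\theta-\mathcal L((d\theta)_1)\wedge\tau$ (in Case~(2)) is closed, then invoke the compactly supported Poincar\'e lemma on $N\simeq\mathbf R^{2m-1}$ to write it as $d\eta$, and finish via Corollary~\ref{boundary-cor} and Lemma~\ref{complex-lem1}(1). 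The paper instead proceeds in Case~(1) by directly producing $\beta$ horizontal with $d\theta=d(\beta\wedge\tau)$ (without passing through the Poincar\'e lemma), and in Case~(2) it establishes an auxiliary identity expressing $T(\theta)$ for \emph{every} $\pi^*T\in\mathcal Z^{q,m}(R,\xi)$ via a primitive $S$ of $T$, which handles both directions at once. Your route via the Poincar\'e lemma is slightly more elementary and perfectly adequate; your a~posteriori justification that the subfamily $\{d\varphi\}$ of test currents suffices (because the extracted condition already implies vanishing) is exactly right and is implicitly what the paper does as well.
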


\begin{proof} 
(1). Let $\theta \in \Omega _c ^{k-1}(N)$ and suppose that $d\theta$ can be written $d\theta = \gamma \wedge d\tau + \delta \wedge \tau$, with $\gamma$ and $\delta$ horizontal. When $k=1$, such a relation is impossible unless $\theta =0$. Namely the differential of a non-zero compactly supported function has always a non-zero horizontal component. Let us assume $k \geqslant 2$. We claim that there exists a horizontal form $\beta \in \Omega _c ^{k-2}(N)$, such that $d\theta = d(\beta \wedge \tau)$. In combination with Corollary \ref{boundary-cor} and Lemma \ref{complex-lem1}, this yields $[d(\chi \cdot \pi ^* \theta)] =0$.

Since $\beta, \gamma$ and $\delta$ are horizontal forms, the equation $d\theta = d(\beta \wedge \tau)$
is equivalent to the following system of two equations 
$$ \gamma \wedge d\tau = (-1)^k \beta \wedge d\tau \quad \mathrm{and} \quad \delta \wedge \tau = d\beta \wedge \tau.$$
Set $\beta := (-1)^k \gamma$. Then the first equation is satisfied. Moreover one has $d\beta \wedge \tau = (-1)^k d\gamma \wedge \tau = (-1)^k (d\gamma)_1 \wedge \tau$. Thus the second equation is satisfied if the relation $\delta = (-1)^k (d\gamma)_1$ holds. Since $dd\theta =0$, one has $d\gamma \wedge d\tau + d\delta \wedge \tau - (-1)^{k} \delta \wedge d\tau=0$. This implies that $((d \gamma)_1 - (-1)^{k}\delta)\wedge d\tau =0$, which in turn implies that $(d \gamma)_1 - (-1)^{k}\delta=0$ since the Lefschetz map $L_{k-1}$ is injective (recall that $k<m$ by assumption). Therefore the second equation is satisfied and the claim is proved.

Conversely, let $\theta \in \Omega _c ^{k-1}(N)$ be such that $[d(\chi \cdot \pi^*\theta)] =0$. Denote by $(q,\ell)$ the Poincar\'e dual of $(p,k)$ relatively to $R$. One has $\frac{2m}{\ell +1} <q< \frac{2m}{\ell}$ and $\ell >m$. In particular $L_{\ell -2}$ is not injective, and therefore the following subspace is non-trivial
$$\Gamma = \{\alpha \in \Omega _c ^{\ell -2}(N) : \alpha \mathrm{~is~horizontal~and~} \alpha \wedge d\tau =0\}.$$
Pick $\alpha \in \Gamma$ and consider the form $\varphi = \alpha \wedge \tau \in \Omega _c ^{\ell -1}(N)$. 
One has $d\varphi = d\alpha \wedge \tau +(-1)^\ell \alpha \wedge d\tau = d\alpha \wedge \tau$. 
Thus $d\varphi$ is vertical, and so by Lemma \ref{complex-lem2}(2) the form $\pi^*(d\varphi)$ belongs to $\mathcal Z^{q, \ell}(R, \xi)$. 
Our assumption $[d(\chi \cdot \pi^*\theta)] =0$, in combination with Theorem \ref{Lie-thm}(4), implies that $\int _N d\varphi \wedge \theta =0$. With Stokes' formula and the definition of $\varphi$, it follows that $\int_N \alpha \wedge \tau \wedge d\theta =0$, {\it i.e.}\! $\int_N \alpha \wedge \tau \wedge (d\theta)_1 =0$. 
So far we have established that every $\theta \in \Omega _c ^{k-1}(N)$ such that $[d(\chi \cdot \pi^*\theta)] =0$ satisfies the following property:
\begin{equation}\label{propertybis-complex} \int_N \alpha \wedge \tau \wedge (d\theta)_1 =0 \mathrm{~~for~all~}\alpha \in \Gamma.
\end{equation}
When $k \geqslant 2$, we will show that Property (\ref{propertybis-complex}) implies that $(d\theta)_1$ can be written $\gamma \wedge d\tau$. When  $k=1$, we will prove that (\ref{propertybis-complex}) implies $(d\theta)_1 =0$.

Suppose first that $k=1$. One has $\ell = 2m-1$, $L_{\ell-2} =0$ and $\Gamma = \Omega _1^{\ell -2}\cap \Omega _c ^{\ell -2}(N)$. Thus Property (\ref{propertybis-complex}) yields that $\int_N \omega \wedge \tau \wedge (d\theta)_1 =0$ for every $\omega \in \Omega _c ^{\ell -2}(N)$. This in turn implies that $\tau \wedge (d\theta)_1 =0$, {\it i.e. \!} $(d\theta)_1=0$.

Suppose now that $k\geqslant 2$. We will use the following lemma:

\begin{lemma}
\label{key-lemma} 
Let $b : \wedge ^{\ell -2} \frak n _1 ^* \times \wedge ^k \frak n _1 ^* \to \mathbf R$, be the non-degenerate bilinear form defined by $b(u,v) = u \wedge v$. Relative to $b$, one has $(\Ker L_{\ell -2})^\bot = {\rm Im} \, L_{k-2}$.
\end{lemma}

\begin{proof}
[Proof of Lemma \ref{key-lemma}]
Since $b$ is non-degenerate, the statement is equivalent to $({\rm Im} \, L_{k-2} )^\bot = \Ker L_{\ell -2}$.
Let $u \in \wedge ^{\ell -2}\frak n_1^*$. 
It belongs to $({\rm Im} \, L_{k-2} )^\bot$ if and only if $u \wedge v \wedge d\tau =0$ for all $v \in \wedge ^{k -2}\frak n_1^*$.  This is equivalent to $u \wedge d\tau =0$, {\it i.e.}\! to $u \in \Ker L_{\ell -2}$.
\end{proof}

Lemma \ref{key-lemma} allows one to complete the proof of Item (1) as follows. By definition, $\Gamma$ is the space of compactly supported smooth sections of the left-invariant vector bundle over $N$ generated by $\Ker L_{\ell -2}$. Property (\ref{propertybis-complex}) can be interpreted as saying that $(d\theta)_1$ is a smooth section of the left-invariant vector bundle generated by $(\Ker L_{\ell -2})^\bot$. By Lemma \ref{key-lemma}, this is equivalent to $(d\theta)_1 = \gamma \wedge d\tau$ for some horizontal form $\gamma \in \Omega _c ^{k-2}(N)$.

(2). Suppose that $k=m$, and let $(q,m)$ be the Poincar\'e dual of $(p,m)$ relatively to $R$. One has $\frac{2m}{m+1} <q< \frac{2m}{m}=2$. Let $\theta \in \Omega _c ^{m-1}(N)$. According to Theorem \ref{Lie-thm} and Poincar\'e duality (Proposition \ref{dR-Pd}), the class $[d(\chi \cdot \pi^*\theta)]$ vanishes in $L^p \mathrm{H}_{\mathrm {dR}} ^m (R)$ if and only if $T(\theta) =0$ for all $T \in \mathcal D '^m(N)$ such that $\pi^*T \in \mathcal Z^{q,m}(R, \xi)$.

Since $L_{m-2}$ is an isomorphism, there exist unique horizontal forms $\alpha \in \Omega _c ^{m-2}(N)$ and $\beta \in \Omega _c ^{m-1}(N)$, such that $d\theta$ weight decomposes as $d\theta = \alpha \wedge d\tau + \beta \wedge \tau$.

Let $T \in \mathcal D '^m(N)$ be such that $\pi^*T \in \mathcal Z^{q,m}(R, \xi)$. Since $N$ is contractible and $T$ is closed, it admits a primitive, say $S \in \mathcal D '^{m-1}(N)$. Then $T(\theta)$ admits the following expression:

\begin{lemma} \label{lemma-complement} With the notation above, one has
$$T(\theta) = (-1)^m S_1 \Bigl( \bigl(\beta  - (-1)^m d\alpha \bigr) \wedge \tau \Bigr).$$
Moreover:
$$\bigl( \beta - (-1)^m d\alpha \bigr) \wedge \tau  = d\Bigl(\theta -(-1)^m \mathcal L \bigl((d\theta)_1\bigr) \wedge \tau \Bigr).$$
\end{lemma}

Assume for a moment that the lemma holds. Then the ``if'' part of item (2) follows immediately. To establish the ``only if'' part, we apply the lemma with some explicit currents $T$. Let $\varphi _1 \in \Omega _c ^{m-1}(N)$ be an arbitrary horizontal form, let $\varphi _2 \in \Omega _c ^{m-2}(N)$ be the horizontal form uniquely determined by the equation $(d\varphi_1)_1 = -(-1)^m \varphi _2 \wedge d\tau$. Set $\varphi := \varphi _1 + \varphi _2 \wedge \tau$. Then an easy computation shows that $d\varphi$ is vertical. Thus, by Lemma \ref{complex-lem2}(2), the current $\pi ^* (d\varphi)$ belongs to $\mathcal Z^{q,m}(R, \xi)$. Lemma \ref{lemma-complement} applied to $T = d\varphi$, yields that
$$\int _N \varphi_1 \wedge \bigl(\beta - (-1)^m d\alpha \bigr) \wedge \tau =0,$$
for every horizontal $\varphi _1 \in \Omega _c ^{m-1}(N)$.
Therefore $\bigl(\beta - (-1)^m d\alpha \bigr) \wedge \tau =0$, and the second part of the lemma completes the proof of item (2).
\end{proof}

It remains to give the 

\begin{proof}[Proof of Lemma \ref{lemma-complement}]

Since $S$ is a primitive of $T$, we have $T(\theta) = dS(\theta) = (-1)^m S(d\theta)$. 
From Relation (\ref{weight}) and the expression $d\theta =  \alpha \wedge d\tau + \beta \wedge \tau$, it follows that $S(d\theta) = S_1 (\beta \wedge \tau) + S_2 (\tau \wedge \alpha \wedge d\tau)$. By Lemma \ref{complex-lem2}(1) the current $T$ is vertical. This means (by a simple computation) that $(dS_1)_1 = -(-1)^m S_2 \wedge d\tau$. Therefore:
\begin{align*}
S_2 (\tau \wedge \alpha \wedge d\tau) &= (-1)^m S_2 (d\tau \wedge \alpha \wedge \tau)= -(dS_1)_1(\alpha \wedge \tau) \\
&= -dS_1(\alpha \wedge \tau) = -(-1)^m S_1 \bigl(d(\alpha \wedge \tau)\bigr) \\ 
&= S_1 \bigl(-(-1)^m d\alpha \wedge \tau\ \bigr).
\end{align*}
The expected formula for $T(\theta)$ follows. To establish the second formula, we compute
\begin{align*}
d\Bigl(\theta -(-1)^m \mathcal L \bigl((d\theta)_1 \bigr) \wedge \tau \Bigr) &= d\bigl( \theta -(-1)^m \alpha \wedge \tau \bigr) \\
&= \alpha \wedge d\tau + \beta \wedge \tau -(-1)^m d\alpha \wedge \tau - \alpha \wedge d\tau \\
&= \bigl( \beta - (-1)^m d\alpha \bigr) \wedge \tau.
\end{align*}
The lemma is proved.
\end{proof}

The second result of the section deals with the Banach space $\mathcal Z^{p,k}(R, \xi)$ for $k \in\{m, 2m-1\}$ and $\frac{2m}{k+1} <p< \frac{2m}{k}$. 
According to Lemma \ref{complex-lem2}(2), the forms $\pi^*(d\varphi)$, where $\varphi \in \Omega_c ^{k-1}(N)$ and $d\varphi$ is vertical, belong to $\mathcal Z^{p,k}(R, \xi)$. 
A natural problem is to determine whether they form a dense subspace. For the norm topology, we do not know, but for the current topology this is indeed the case:

\begin{proposition}\label{dense-prop} Let $k \in\{m, \dots, 2m-1\}$ and $\frac{2m}{k+1} <p< \frac{2m}{k}$. The set $\{d\varphi: \varphi \in \Omega_c ^{k-1}(N), ~d\varphi \mathrm{~is~vertical}\}$ is a dense subspace in the sense of currents in $\{T \in
\mathcal D '^k(N) : \pi^* T \in \mathcal Z^{p,k}(R, \xi)\}$.
\end{proposition}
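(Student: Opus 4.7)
The plan is to use Hahn--Banach in the predual $\Omega_c^{2m-1-k}(N)$ of $\mathcal D'^k(N)$, and to identify the annihilators via the boundary Poincar\'e duality of Theorem \ref{Lie-thm}(4) together with the explicit vanishing criteria of Proposition \ref{null-prop}. Setting $V := \{d\varphi : \varphi \in \Omega_c^{k-1}(N), \ d\varphi \ \mathrm{is \ vertical}\}$ and $\mathcal Z := \{T \in \mathcal D'^k(N) : \pi^*T \in \mathcal Z^{p,k}(R,\xi)\}$, Lemma \ref{complex-lem2}(2) ensures $V \subseteq \mathcal Z$; by the bipolar theorem, weak$^*$-density of $V$ in $\mathcal Z$ reduces to showing that, for every $\theta \in V^\perp$, one has $T(\theta) = 0$ for every $T \in \mathcal Z$. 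Set $\ell := 2m-k \in \{1,\dots,m\}$ and let $q$ be the H\"older conjugate of $p$; then $q$ lies in the range $(\frac{2m}{\ell}, \frac{2m}{\ell-1})$, both $L^p \mathrm{H}^k_{\mathrm{dR}}(R)$ and $L^q \mathrm{H}^\ell_{\mathrm{dR}}(R)$ are Hausdorff (Theorem \ref{complex-thm}), and combining the pairing of Theorem \ref{Lie-thm}(4) with Poincar\'e duality (Proposition \ref{dR-Pd}) shows that $T(\theta) = 0$ for every $T \in \mathcal Z$ if and only if $[d(\chi \cdot \pi^*\theta)] = 0$ in $L^q \mathrm{H}^\ell_{\mathrm{dR}}(R)$. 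Proposition \ref{null-prop} recasts this vanishing as an explicit condition on $d\theta$, which I will verify directly from the assumption $\theta \in V^\perp$.

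For the case $k > m$ (so $\ell < m$), I plug into the annihilation condition the family of test $\varphi$'s built in Lemma \ref{complex-lem2}(3): $\varphi = f \alpha \wedge \tau$, with $f \in \Omega_c^0(N)$ arbitrary and $\alpha \in \Omega_c^{k-2}(N)$ horizontal satisfying $\alpha \wedge d\tau = 0$. Stokes' formula together with $\tau \wedge \tau = 0$ reduces $\int_N d\varphi \wedge \theta = 0$ to $\int_N f\, \alpha \wedge (d\theta)_1 \wedge \tau = 0$ for all such $f,\alpha$; varying $f$ gives the pointwise relation $\alpha(g) \wedge (d\theta)_1(g) = 0$ for every $\alpha(g) \in \Ker L_{k-2}$. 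Lemma \ref{key-lemma} then places $(d\theta)_1$ pointwise in $\im L_{2m-k-2}$, and the injectivity of $L_{2m-k-2}$ (which holds because $k > m$) produces a unique smooth, compactly supported horizontal $\gamma \in \Omega_c^{2m-k-2}(N)$ with $(d\theta)_1 = \gamma \wedge d\tau$. This is the criterion of Proposition \ref{null-prop}(1), hence $[d(\chi \cdot \pi^*\theta)] = 0$.

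For the borderline case $k = m$ (so $\ell = m$), the Lefschetz isomorphism $L_{m-2}$ uniquely associates to each horizontal $\varphi_1 \in \Omega_c^{m-1}(N)$ a horizontal $\varphi_2 \in \Omega_c^{m-2}(N)$ such that $\varphi := \varphi_1 + \varphi_2 \wedge \tau$ has $d\varphi$ vertical (construction of Lemma \ref{complex-lem2}(3) when $k = m$). Since $T := d\varphi$ lies in $V$ and admits $S := \varphi$ as a primitive, Lemma \ref{lemma-complement} applies and gives
\begin{equation*}
0 \ = \ T(\theta) \ = \ (-1)^m \int_N \varphi_1 \wedge \bigl(\beta - (-1)^m d\alpha\bigr) \wedge \tau,
\end{equation*}
with $\alpha = \mathcal L((d\theta)_1)$ and $\beta = (d\theta)_2$. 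As $\varphi_1$ varies over all horizontal elements of $\Omega_c^{m-1}(N)$ and the pairing $(\varphi_1, \xi) \mapsto \int_N \varphi_1 \wedge \xi \wedge \tau$ on horizontal $(m-1)$-forms is pointwise nondegenerate, this forces $\beta = (-1)^m (d\alpha)_1$; the second identity of Lemma \ref{lemma-complement} then rewrites this as $d\bigl(\theta - (-1)^m \mathcal L((d\theta)_1) \wedge \tau\bigr) = 0$, which is the criterion of Proposition \ref{null-prop}(2). The main obstacle is precisely this borderline case: the Lefschetz isomorphism $L_{m-2}$ rigidifies the construction of admissible $\varphi$, leaving only one free horizontal parameter $\varphi_1$, and the nontrivial step is to leverage the primitive formula of Lemma \ref{lemma-complement} in order to convert the resulting one-parameter annihilation condition into the full cohomological criterion on $\theta$.
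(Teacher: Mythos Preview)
Your proof is correct and follows essentially the same route as the paper's. Both reduce density via Hahn--Banach/bipolar to showing that any $\theta \in \Omega_c^{\ell-1}(N)$ annihilating all $d\varphi$ in $V$ must satisfy $[d(\chi\cdot\pi^*\theta)]=0$ in $L^q\mathrm H^\ell_{\mathrm{dR}}(R)$, and then conclude via Theorem~\ref{Lie-thm}(4). The only difference is packaging: the paper observes that the required implication is exactly what was established in the ``only if'' parts of the proof of Proposition~\ref{null-prop} (those arguments used precisely the test forms $\varphi=f\alpha\wedge\tau$ with $\alpha\in\Ker L_{k-2}$ for $k>m$, and $\varphi=\varphi_1+\varphi_2\wedge\tau$ with the Lefschetz constraint for $k=m$), so it simply refers back; you instead re-derive those criteria explicitly and then invoke Proposition~\ref{null-prop} as a black box. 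The test forms, the use of Lemma~\ref{key-lemma}, and the use of Lemma~\ref{lemma-complement} in the borderline case are identical.
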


\begin{proof} Set $F := \{d\varphi: \varphi \in \Omega_c ^{k-1}(N), ~d\varphi \mathrm{~is~vertical}\}$ and $E:= \{T \in
\mathcal D '^k(N) : \pi^* T \in \mathcal Z^{p,k}(R, \xi)\}$ for simplicity.
Let $(q, \ell)$ be the Poincar\'e dual of $(p,k)$ relatively to $R$. The topology on $E$, which is induced by the weak*-topology of $\mathcal D'^k (N)$, is generated by the linear forms $\Lambda _\theta : E \to \mathbf R$ defined by $\Lambda _\theta (T) =T(\theta)$, where $\theta$ belongs to $\Omega _c ^{\ell -1}(N)$. We shall let $E_{\mathrm{top}}$ denote $E$ equipped with this topology.

According to the Hahn-Banach Theorem, showing that $F$ is dense in $E_{\mathrm{top}}$, is equivalent to proving the triviality of every $\Lambda \in E_{\mathrm{top}}^*$ such that $\Lambda (F) =\{0\}$.

Since every element of $E_{\mathrm{top}}^*$ is a $\Lambda _\theta$ for some $\theta \in \Omega _c ^{\ell -1}(N)$ (see \cite[Theorem 3.10]{Ru}),  we are led to showing the following: if $\theta \in \Omega _c ^{\ell-1}(N)$ satisfies $\int _N d\varphi \wedge \theta = 0$ for every $d\varphi \in F$, then $\Lambda _\theta = 0$.

By analysing the ``only if'' parts of the proof of the previous Proposition \ref{null-prop}, one sees that the $\theta$'s such that $\int _N d\varphi \wedge \theta = 0$ for every $d\varphi \in F$, are precisely those for which  $[d(\chi \cdot \pi ^* \theta)] =0$ in $L^q \mathrm{H}_{\mathrm{dR}} ^\ell (R)$. Therefore they satisfy $\Lambda _\theta =0$, thanks to Theorem \ref{Lie-thm}(4).
\end{proof}

\section{The symmetric space ${\rm SL}_3({\mathbf R})/{\rm SO}_3({\mathbf R})$} 
\label{s - sl3}

We prove Theorem \ref{SL3-theorem} (stated in the introduction) which describes the second $L^p$-cohomology of ${\rm SL}_3({\mathbf R})/{\rm SO}_3({\mathbf R})$. The strategy is similar to the one conducted in Section \ref{s - non QI solvable} to study the second $L^p$-cohomology of the groups $S_\alpha$. It highly relies on the description of the cohomology of the complex hyperbolic plane discussed in Section \ref{s - complex hyp sp}.

\subsection{Notation and decomposition of ${\rm SL}_3({\mathbf R})$}\label{SL3-notation}
At first we introduce the various subgroups of $\mathrm{SL}_3(\mathbf R)$ we will be working with in the sequel.

The relevant  Iwasawa decomposition here is $\mathrm{SL}_3(\mathbf R) = KAN$, with $K=\mathrm{SO}_3(\mathbf R)$, $A = \mathrm{Diag}(\mathbf R ^3) \cap \mathrm{SL}_3(\mathbf R)$ and 
$$N = \Bigl\{\left(
        \begin{array}{ccc}
          1 & x & z \\
          0 & 1 & y \\
          0 & 0 & 1
        \end{array}
      \right) : x, y, z \in \mathbf R \Bigr\}
   \simeq \mathrm{Heis}(3).$$
Let $\frak a$ and $\frak n$ be the Lie algebras of $A$ and $N$ respectively. Every element of $\frak n$ can be naturally denoted by a triple $(x,y,z) \in \mathbf R ^3$.

Let $\xi, \eta \in \frak a$ be defined by $\xi = \mathrm{diag}(-1,0,1)$ and $\eta = \frac{1}{3} \mathrm{diag}(1,-2,1)$. They act on $\frak n$ by: 
\begin{equation}\label{notation-equation}
\mathrm{ad} \xi \cdot (x,y,z) = (-x, -y, -2z) ~~\mathrm{and}~~ 
\mathrm{ad} \eta \cdot (x,y,z) = (x, -y, 0). 
\end{equation}
We consider the following subgroups of  $\mathrm{SL}_3 (\mathbf R)$:$$R:=\{e^{t\xi}\}_{t \in \mathbf R} \ltimes N \simeq \mathbb H ^2_{\mathbf C},\quad  H := \{e^{s\eta}\}_{s \in \mathbf R} \ltimes N \quad \mathrm{~and}$$
$$S := A \ltimes N = \{e^{s\eta}\}_{s \in \mathbf R} \ltimes R = \{e^{t\xi}\}_{t \in \mathbf R} \ltimes H.$$
The Lie group $S$ is isometric to the symmetric space $\mathrm{SL}_3 (\mathbf R) / \mathrm{SO}_3(\mathbf R)$.

\subsection{First observations} \label{SL3-obs} 
We derive from previous results some preliminary observations on $L^p \mathrm{H}_{\mathrm{dR}}^2 (S)$. 
The notations are the same as in the previous section.

\begin{proposition}\label{SL3-obs1} One has $L^p \mathrm{H}_{\mathrm{dR}}^2 (S) = \{0\}$ for $p < \frac{4}{3}$.
\end{proposition}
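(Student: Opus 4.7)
The plan is to mimic the strategy of Proposition \ref{S_mu-obs1}: write $S$ as a semidirect product $\mathbf{R} \ltimes_\delta H$ so that Theorem \ref{Lie-thm}(1) applies, and then check that the arithmetic of the eigenvalues of $-\delta$ yields vanishing for the stated range of $p$.

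First I would use the decomposition $S = \{e^{t\xi}\}_{t \in \mathbf{R}} \ltimes H$ from Section \ref{SL3-notation}. This realizes $S$ as $\mathbf{R} \ltimes_\delta H$, where $\delta \in \mathrm{Der}(\mathfrak{h})$ is the restriction of $\mathrm{ad}\,\xi$ to $\mathfrak{h} = \mathbf{R}\eta \oplus \mathfrak{n}$. Since $\xi, \eta \in \mathfrak{a}$ commute, one has $\delta(\eta) = 0$, and formulas \eqref{notation-equation} give $\delta(X) = -X$, $\delta(Y) = -Y$, $\delta(Z) = -2Z$ on the canonical basis of $\mathfrak{n}$. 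In particular all eigenvalues of $\delta$ are non-positive and $\mathrm{trace}(\delta) = -4 < 0$, so the hypotheses of Section \ref{s - Lie} are met.

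Next I would list the ordered eigenvalues of $-\delta$, enumerated with multiplicities: $\lambda_1 = 0$, $\lambda_2 = \lambda_3 = 1$, $\lambda_4 = 2$. Thus with the notation of Section \ref{s - Lie} one obtains
\[
h = \lambda_1 + \lambda_2 + \lambda_3 + \lambda_4 = 4, \qquad W_2 = \lambda_3 + \lambda_4 = 3.
\]

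Finally I would invoke Theorem \ref{Lie-thm}(1) applied to the pair $(G, k) = (S, 2)$: the cohomology $L^p \mathrm{H}^2_{\mathrm{dR}}(S)$ vanishes as soon as $p < \tfrac{h}{W_2} = \tfrac{4}{3}$, which is exactly the claim. There is no genuine obstacle here; the only thing to verify carefully is the identification of the eigenvalues of $\mathrm{ad}\,\xi$ on $\mathfrak{h}$, which follows at once from \eqref{notation-equation} and the commutation $[\xi,\eta] = 0$.
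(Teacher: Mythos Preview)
Your proposal is correct and follows essentially the same approach as the paper: write $S = \mathbf{R} \ltimes_\delta H$ with $\delta = \mathrm{ad}\,\xi\restr_{\mathfrak h}$, read off the eigenvalues $0,1,1,2$ of $-\delta$, compute $h=4$ and $W_2=3$, and apply Theorem~\ref{Lie-thm}(1). Your explicit verification of the eigenvalues via \eqref{notation-equation} and $[\xi,\eta]=0$ is slightly more detailed than the paper's account but otherwise identical.
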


\begin{proof} Let $\frak h$ denotes the Lie algebra of $H$. Set $\delta := \mathrm{ad} \xi \restr _{\frak h} \in \mathrm{Der}(\frak h)$, so that $S$ can be written $S = \mathbf R \ltimes _\delta H$. The ordered list of eigenvalues of $-\delta$ enumerated with multiplicity, is
$$\lambda _1 =0< \lambda _2 = \lambda _3 =1 <\lambda _4=2.$$
Thus, with the notation of Section \ref{s - Lie}, the trace of $-\delta$ is $h=4$, and one has $W _{2} = \lambda _3 + \lambda _4=3$. Therefore the statement follows from Theorem \ref{Lie-thm}(1).
\end{proof}

 \begin{proposition}\label{SL3-obs2} For $p \in (\frac{4}{3}; 4)\setminus \{2\}$, the space $L^p \mathrm{H}_{\mathrm{dR}}^2 (R)$ is non-zero and Hausdorff, and there exists a linear isomorphism
$$L^p \mathrm{H}_{\mathrm{dR}}^2 (S) \simeq \Bigl\{[\omega ] \in L^p \mathrm{H}_{\mathrm{dR}}^2 (R) : \int _{\mathbf R} \Vert {e^{s\mathrm{ad}\eta}}^* [\omega] \Vert^p _{L^p \mathrm{H}^ 2(R)} ds < +\infty \Bigr\}.$$
\end{proposition}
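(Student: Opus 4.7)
The plan is to reduce to the description of the $L^p$-cohomology of the complex hyperbolic plane obtained in Section \ref{s - complex hyp sp} and then run a Hochschild-Serre type spectral sequence. Since $R$ is isometric to $\mathbb H^2_{\mathbf C}$, I would first invoke Theorem \ref{complex-thm} (case $m=2$) to establish that, for every $p \in (\frac{4}{3}; 4) \setminus \{2\}$, the space $L^p \mathrm{H}^2_{\mathrm{dR}}(R)$ is Hausdorff and non-zero, while $L^p \mathrm{H}^k_{\mathrm{dR}}(R) = \{0\}$ for every $k \neq 2$.

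The concentration in degree $2$ is a routine case analysis of Items (1)--(3) of Theorem \ref{complex-thm} at $m=2$. For $k = 2$ and $p \in (\frac{4}{3}; 2)$, Item (2) applies with $k=m$ and yields the conclusion; for $p \in (2; 4)$, Item (3) applies with $k=m$. For $k = 1$, Item (1) gives the vanishing for $p < 2$, and Item (2) covers $p \in (2; 4)$ since the condition $k \geqslant m$ fails; for $k = 3$, Item (1) gives the vanishing for $p > 2$, and Item (3) covers $p \in (\frac{4}{3}; 2)$ since the condition $k \leqslant m$ fails; for $k = 4$, Item (1) alone disposes of the whole range $p > \frac{4}{3}$. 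The degree $k = 0$ is trivial as $R$ is of infinite volume.

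With this concentration in hand, I would apply the Hochschild-Serre argument of \cite[Corollary 6.10]{BR2} to the semidirect decomposition $S = \mathbf R \ltimes_{(0, \mathrm{ad}\eta)} R$, in which the flow of the extra $\mathbf R$-factor acts on the cohomology of $R$ via $(e^{s\mathrm{ad}\eta})^*$. The vanishing of $L^p \mathrm{H}^k_{\mathrm{dR}}(R)$ for $k \neq 2$ makes the corresponding spectral sequence degenerate and translates directly into the claimed linear isomorphism, identifying $L^p \mathrm{H}^2_{\mathrm{dR}}(S)$ with the subspace of classes $[\omega] \in L^p \mathrm{H}^2_{\mathrm{dR}}(R)$ whose $\mathbf R$-orbit under $(e^{s\mathrm{ad}\eta})^*$ is $L^p$-integrable in $s$.

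The delicate point to keep in mind, rather than a deep obstacle, is the exclusion of the value $p=2$. This exponent lies precisely at the common boundary of Items (2) and (3) of Theorem \ref{complex-thm} (where $p = \frac{2m}{k} = \frac{2m}{m}$), so neither the Hausdorff property nor the identification with $\mathcal Z^{p,2}(R, \xi)$ is covered by our earlier results; this is exactly why the statement imposes $p \neq 2$.
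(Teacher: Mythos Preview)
Your proposal is correct and follows exactly the paper's approach: invoke Theorem \ref{complex-thm} (at $m=2$) to see that the $L^p$-cohomology of $R \simeq \mathbb H^2_{\mathbf C}$ is concentrated in degree $2$, non-zero and Hausdorff there, and then apply the Hochschild--Serre argument of \cite[Corollary 6.10]{BR2} to the decomposition $S = \{e^{s\eta}\}_{s\in\mathbf R} \ltimes R$. One minor slip: Theorem \ref{complex-thm} is stated only for $k \in \{1,\dots,2m-1\}$, so Item (1) does not literally cover $k=4$; the vanishing there follows instead from Poincar\'e duality with $L^q \mathrm H^0_{\mathrm{dR}}(R)=\{0\}$ (or is simply not needed for the degree-$2$ computation).
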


\begin{proof} When $p \in (\frac{4}{3}; 4)\setminus \{2\}$, Theorem \ref{complex-thm} shows that $L^p \mathrm{H}_{\mathrm{dR}}^2 (R)$ is non-zero and Hausdorff, and that $L^p \mathrm{H}_{\mathrm{dR}}^k (R) = \{0\}$ in all degrees $k\neq 2$. Since $S = \{e^{s\eta}\}_{s \in \mathbf R} \ltimes R$, the above description of the cohomology of $R$, in combination with a Hochschild-Serre spectral sequence argument (see \cite[Corollary 6.10]{BR2}), yields the desired linear isomorphism.
\end{proof}

\begin{proposition}\label{SL3-obs3} For $p>4$, the space $L^p \overline{\mathrm{H}_{\mathrm{dR}}^2} (S)$ is non-zero. 
\end{proposition}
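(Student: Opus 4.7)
The plan is to mimic the proof of Proposition \ref{S_mu-obs3} verbatim, replacing the derivation $(0,-I_3) \in \mathrm{Der}(\frak h_\mu)$ by $\delta := \mathrm{ad}\xi \restr_{\frak h} \in \mathrm{Der}(\frak h)$, where we use the decomposition $S = \mathbf R \ltimes_\delta H$ that is already recorded in Section \ref{SL3-notation}. The point is that the non-vanishing of $L^p \overline{\mathrm H^2_{\mathrm{dR}}}$ in the ``top strip'' $p > h/w_{k-1}$ (for $k$ equal to the rank) is a general fact established in \cite[Theorem B and Corollary 3.4]{BR2} for higher-rank Lie groups, and we only need to identify the relevant numerical threshold for $S$.

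First I would recall from the proof of Proposition \ref{SL3-obs1} that the ordered list of eigenvalues of $-\delta$, counted with multiplicity, is
\[
\lambda_1 = 0 < \lambda_2 = \lambda_3 = 1 < \lambda_4 = 2,
\]
so that the trace of $-\delta$ equals $h = 4$ and $w_2 = \lambda_1 + \lambda_2 = 1$. Next I would observe that $S$ has rank $2$ (it is the rank of the symmetric space $\mathrm{SL}_3(\mathbf R)/\mathrm{SO}_3(\mathbf R)$, and equivalently of the maximal split torus $A$ of its Iwasawa decomposition).

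Then I would invoke \cite[Theorem B and Corollary 3.4]{BR2}: since the rank of $S$ equals $2$, these results guarantee that $L^p \overline{\mathrm H^2_{\mathrm{dR}}}(S)$ is non-zero whenever $p > h/w_2 = 4/1 = 4$, which is exactly the desired conclusion. The only point that requires care is to check that the hypotheses of \cite[Corollary 3.4]{BR2} are satisfied in the present setting, namely that $S$ is a non-unimodular completely solvable Lie group of the form $\mathbf R \ltimes_\delta H$ with all eigenvalues of $\delta$ of non-positive real part and $\mathrm{trace}(\delta) < 0$; but this was already verified in the proof of Proposition \ref{SL3-obs1}. I expect no serious obstacle here, since the argument is a direct transcription of the one used for Proposition \ref{S_mu-obs3} — the only substantive replacement is recomputing $w_2$ from the new eigenvalue list of $-\delta$.
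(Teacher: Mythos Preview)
Your argument is essentially identical to the paper's: same decomposition $S = \mathbf R \ltimes_\delta H$, same eigenvalue list, same computation $h/w_2 = 4$, and the same appeal to \cite{BR2}. The one discrepancy is that the paper cites \cite[Theorem~C and Corollary~3.4]{BR2} here, whereas for the groups $S_\alpha$ in Proposition~\ref{S_mu-obs3} it cites Theorem~B; you have copied the Theorem~B citation verbatim. This may well be because in \cite{BR2} Theorem~B treats general solvable groups while Theorem~C is the version stated for semisimple groups (and $S$ is the Iwasawa $AN$ of $\mathrm{SL}_3(\mathbf R)$), so you should double-check which statement in \cite{BR2} actually covers the present case before finalising the reference.
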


\begin{proof} Consider again $\lambda _1 =0< \lambda _2 = \lambda _3 =1 <\lambda _4=2$ the list of the eigenvalues of $-\delta = - \mathrm{ad} \xi \restr _{\frak h} \in \mathrm{Der}(\frak h)$. The trace of $-\delta$ is $h=4$, and one has $w _{2} = \lambda _{1} + \lambda _{2} = 1$. 
Since the rank of $S$ is equal to $2$, it follows from \cite[Theorem C and Corollary 3.4]{BR2} that $L^p \overline{\mathrm{H}_{\mathrm{dR}}^2} (S)$ is non-zero for $p>\frac{h}{w _{2}}=4$.
\end{proof}

\subsection{Auxiliary results on $\mathbb H ^2 _{\mathbf C}$}\label{SL3-aux}
This section is devoted to auxiliary results (Lemmata \ref{SL3-aux1} and \ref{SL3-aux2}) that will serve in the next section to prove Theorem \ref{SL3-theorem}. 
 
Recall that $R=\{e^{t\xi}\}_{t \in \mathbf R} \ltimes N \simeq \mathbb H ^2_{\mathbf C}$. Let $\pi _N$ be the projection map from $R$ onto $N$.
 
 \begin{lemma}\label{SL3-aux1}
 Suppose that $p \in (2; 4)$ and let $\theta \in \Omega _c ^1 (N) \setminus \{0\}$ be of the form $\theta = fdx$ or $gdy$. Then: 
 \begin{enumerate}
  \item The class $[d(\chi \cdot \pi ^*_N \theta)]$ is non-zero in $L^p \mathrm{H}_{\mathrm{dR}}^2 (R)$.
  \item If $\theta = fdx$ (resp. \!$gdy$), one has 
  $$\bigl\Vert [d(\chi \cdot \pi ^* _N {e ^{s \mathrm{ad}\eta}} ^* \theta)] \bigr\Vert _{L ^p \mathrm H ^{2}(R)} \to 0$$
  exponentially fast, when $s$ tends to $-\infty$ (resp. \!$+\infty$).
 \end{enumerate}
 \end{lemma}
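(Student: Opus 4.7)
\medskip

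\noindent\textit{Part (1).} I will apply the non-vanishing criterion of Proposition~\ref{null-prop}(2) with $m=k=2$; the range $p\in(2,4)$ is exactly $(\tfrac{2m}{k},\tfrac{2m}{k-1})$, so Theorem~\ref{complex-thm}(3) guarantees that $L^p\mathrm{H}^2_{\mathrm{dR}}(R)$ is Hausdorff with a dense subspace of classes $[d(\chi\cdot\pi_N^*\theta)]$. For $m=2$, the Lefschetz map $L_0\colon \mathbf R\to \wedge^2\frak n_1^*$ sends $1$ to $d\tau = -dx\wedge dy$, so its inverse is $\mathcal L(\alpha\,dx\wedge dy)=-\alpha$. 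Let $X=\partial_x-\tfrac{y}{2}\partial_z$, $Y=\partial_y+\tfrac{x}{2}\partial_z$, $Z=\partial_z$ be the left-invariant vector fields dual to $dx,dy,\tau$ (so $[X,Y]=Z$). For $f\in C_c^\infty(N)$ one computes $df=(Xf)\,dx+(Yf)\,dy+(Zf)\,\tau$, so for $\theta=f\,dx$ one has $(d\theta)_1=-(Yf)\,dx\wedge dy$ and $\theta':=\theta-\mathcal L((d\theta)_1)\wedge\tau = f\,dx-(Yf)\,\tau$. Expanding $d\theta'$ and using $[X,Y]=Z$ yields
\[
d\theta' = -\bigl(2Zf+Y(Xf)\bigr)\,dx\wedge\tau - (Y^2 f)\,dy\wedge\tau.
\]
Thus $d\theta'=0$ forces $Y^2 f=0$; since $Yf$ is compactly supported and is constant along the (unbounded) integral curves of $Y$, this gives $Yf=0$, and then the same argument gives $f=0$, contradicting $f\neq 0$. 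Proposition~\ref{null-prop}(2) then yields Part~(1). The case $\theta=g\,dy$ is symmetric with $X$ replacing $Y$.

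\medskip

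\noindent\textit{Part (2).} The element $e^{s\eta}$ normalizes $N$, and the induced conjugation $\psi_s$ is the Lie group automorphism of $N$ whose differential on $\frak n$ is $\mathrm{diag}(e^s,e^{-s},1)$ in the basis $(X,Y,Z)$; in exponential coordinates $\psi_s(x,y,z)=(e^sx,e^{-s}y,z)$, which is volume-preserving. Hence $\psi_s^*(f\,dx)=e^s(f\circ\psi_s)\,dx$ with $\|f\circ\psi_s\|_{L^p}=\|f\|_{L^p}$, and a parallel computation gives $\psi_s^*(d\theta)=-((Yf)\circ\psi_s)\,dx\wedge dy-e^s((Zf)\circ\psi_s)\,dx\wedge\tau$. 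I apply Proposition~\ref{Lie-inequalities} to $R=\mathbf R\ltimes_\delta N$ (with $\delta:=\mathrm{ad}\xi|_{\frak n}$, eigenvalues $-1,-1,-2$ on $(X,Y,Z)$) with $(q,\ell)$ renamed to $(p,2)$ — valid because $p>2=h/w_\ell$ — and with $\theta$ replaced by $\psi_s^*\theta$, then estimate each norm by Lemma~\ref{Lie-lemma} (the eigenvalues of $\delta^*$ are $-1$ on $dx,dy$ and $-2$ on $\tau$, hence $-2,-3,-3$ on the 2-forms, and $h_\delta=-4$):
\begin{align*}
\|(e^{t\delta})^*\psi_s^*\theta\|_{L^p\Omega^1(N)}
&\asymp e^{t(-1+4/p)}\,e^s\,\|f\|_{L^p},\\
\|(e^{t\delta})^*\psi_s^*(d\theta)\|_{L^p\Omega^2(N)}
&\asymp e^{t(-2+4/p)}\,\|Yf\|_{L^p}+e^{t(-3+4/p)}\,e^s\,\|Zf\|_{L^p}.
\end{align*}
For $p\in(2,4)$ the three exponents in $t$ are respectively positive, negative, negative. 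Balancing the first two terms by the choice $t=\log(\|Yf\|_{L^p}/\|f\|_{L^p})-s$ (cf.\ Lemma~\ref{lemma - num}) bounds the infimum in Proposition~\ref{Lie-inequalities} by a constant times $e^{s(2-4/p)}+e^{s(4-4/p)}$, which tends to $0$ exponentially as $s\to-\infty$. The case $\theta=g\,dy$ is symmetric and gives the analogous decay as $s\to+\infty$.

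\medskip

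\noindent\textit{Main obstacle.} The substantive work lies in Part~(1): the Heisenberg exterior calculus must be executed correctly so that the vanishing condition collapses to $Y^2 f=0$, which is then dispatched by the unbounded-orbit/compact-support argument. Part~(2) is then a routine optimization in $t$, provided one tracks the eigenvalues of $\delta$ (not of $\mathrm{ad}\eta$) through Lemma~\ref{Lie-lemma}; the balancing is unproblematic because the third term has a strictly more negative $t$-exponent than the second, so any choice of $t=t(s)$ that dominates the first two automatically dominates the third.
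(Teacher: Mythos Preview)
Your proof is correct and follows essentially the same approach as the paper: Part~(1) applies the criterion of Proposition~\ref{null-prop}(2), computing $d\bigl(\theta-\mathcal L((d\theta)_1)\wedge\tau\bigr)$ and observing that the $dy\wedge\tau$-coefficient $Y^2f$ cannot vanish for nonzero compactly supported $f$ (your coefficient $2Zf+YXf$ equals the paper's $Zf+XYf$ via $[X,Y]=Z$); Part~(2) uses Proposition~\ref{Lie-inequalities} with Lemma~\ref{Lie-lemma} to bound the class norm by an infimum over $t$, then picks a specific $t=t(s)$ making all three terms decay exponentially as $s\to-\infty$---the paper chooses $t=-s/(2a)$ with $a=\tfrac{4}{p}-1$ while you choose $t\approx -s$, but both choices work for the same reason.
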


\begin{proof}
(1). We apply the criterion in Proposition \ref{null-prop}(2). Suppose first that $\theta = fdx$. With the notation of Sections \ref{complex-diff} and \ref{complex-complement}, one has $d\theta = (Y\cdot f)dy \wedge dx + (Z \cdot f) \tau \wedge dx$. Since $d\tau =-dx \wedge dy$, we get that 
\begin{align*}
d\Bigl( \theta - \mathcal L \bigl((d\theta) _{1}\bigr)\tau \Bigr) &= d\bigl(\theta -(Y \cdot f)\tau\bigr)\\
&=-(Z \cdot f + X\cdot Y \cdot f) dx \wedge \tau - (Y^{2} \cdot f)dy \wedge \tau.
\end{align*}
The term $ Y^{2} \cdot f$ is non-zero since the function $f$ is non-zero and has compact support. Therefore $d( \theta - \mathcal L ((d\theta) _{1}\tau)$ is non-zero, and the statement follows from Proposition \ref{null-prop}(2). The case $\theta = gdy$ is similar.

(2). Suppose that $\theta = fdx$. For $(s,t) \in \mathbf R ^{2}$, one has
$${e^{t\mathrm{ad} \xi}}^* ({e^{s\mathrm{ad} \eta}}^* \theta) = (e ^{t\mathrm{ad} \xi +s \mathrm{ad} \eta})^*\theta
\mathrm{~~and~~}{e^{t\mathrm{ad} \xi}}^* (d {e^{s\mathrm{ad} \eta}}^* \theta) = (e ^{t\mathrm{ad} \xi +s \mathrm{ad} \eta})^*d\theta.$$
By Lemma \ref{Lie-lemma} and relation (\ref{notation-equation}), their $L^{p}$-norms satisfy
$$\bigl\Vert (e ^{t\mathrm{ad} \xi +s \mathrm{ad} \eta})^*\theta \bigr\Vert _{L^{p}\Omega ^{1}(N)} \asymp e^{(\frac{4}{p} -1)t + s} \Vert f \Vert _p,$$
$$\mathrm{~~and~~}\bigl\Vert (e ^{t\mathrm{ad} \xi +s \mathrm{ad} \eta})^*d\theta \bigr\Vert _{L^{p}\Omega ^{2}(N)} \asymp e^{-(2-\frac{4}{p})t} \Vert Y\cdot f\Vert _p + e^{-(3-\frac{4}{p})t +s} \Vert Z\cdot f\Vert _p.$$
Set $a = \frac{4}{p}-1$, $b= 2- \frac{4}{p}$ and $c= 3- \frac{4}{p}$. One has $a, b, c >0$ since $2<p<4$.
From Proposition \ref{Lie-inequalities}, it follows that
$$\bigl\Vert [d(\chi \cdot \pi ^* _N {e ^{s \mathrm{ad}\eta}} ^* \theta)] \bigr\Vert _{L ^p \mathrm H ^{2}(R)} \leqslant C \inf _{t \in \mathbf R} \bigl\{ (e^{at} + e^{-ct})e^{s} + e^{-bt} \bigr\}.$$
Suppose that $s \to -\infty$, and set $t= -\frac{s}{2a}$. Then
$$(e^{at} + e^{-ct})e^{s} + e^{-bt} = e^{\frac{s}{2}} + e^{(\frac{c}{2a} +1)s} + e^{\frac{b}{2a}s}, $$
which tends to $0$ exponentially fast. The case $\theta = gdy$ is similar.
\end{proof}

 \begin{lemma}\label{SL3-aux2}
Suppose that $p \in (2; 4)$. There exist non-zero forms $\theta =fdx$ and $\Theta =gdy$ in $\Omega _c ^{1}(N)$, such that $[d(\chi \cdot \pi _N ^* \theta)] = [d(\chi \cdot \pi _N ^* \Theta)]$ in $L^p \mathrm{H}_{\mathrm{dR}}^2 (R)$.
\end{lemma}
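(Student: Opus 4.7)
The plan is to produce $\theta = f\, dx$ and $\Theta = g\, dy$ simultaneously from a single auxiliary function $u \in \Omega_c^0(N)$, in such a way that the difference $\theta - \Theta$ splits as the sum of an exact form and a vertical form; both pieces will then yield trivial $L^p$-cohomology classes thanks to results already established.

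Concretely, in the left-invariant coframe $(dx, dy, \tau)$ on $N = \mathrm{Heis}(3)$, the exterior derivative of any $u \in \Omega_c^0(N)$ reads
\[du = (X \cdot u)\, dx + (Y \cdot u)\, dy + (Z \cdot u)\, \tau.\]
This suggests setting $f := -X \cdot u$ and $g := Y \cdot u$, for which one obtains the clean identity
\[\theta - \Theta = f\, dx - g\, dy = -du + (Z \cdot u)\, \tau.\]
The first summand $-du = d(-u)$ is exact with primitive $-u \in \Omega_c^0(N)$, so Corollary \ref{boundary-cor} gives $[d(\chi \cdot \pi_N^*(-du))] = 0$ in $L^p \overline{\mathrm{H}_{\mathrm{dR}}^2}(R)$; its hypotheses are satisfied in our range, and moreover $L^p \mathrm{H}_{\mathrm{dR}}^2(R)$ is Hausdorff for $p \in (2,4)$ by Theorem \ref{complex-thm}(3), so the vanishing transfers to $L^p \mathrm{H}_{\mathrm{dR}}^2(R)$. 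The second summand $(Z \cdot u)\tau = \beta \wedge \tau$, with $\beta := Z \cdot u \in \Omega_c^0(N)$ horizontal, is vertical of pure weight $2$, so Lemma \ref{complex-lem1}(1) applied with $k = 2$ (noting that the required condition $\tfrac{2m}{k} < p < \tfrac{2m}{k-1}$ is precisely $p \in (2, 4)$ when $m = k = 2$) gives $[d(\chi \cdot \pi_N^*((Z \cdot u)\tau))] = 0$. Adding the two, the classes of $\theta$ and $\Theta$ coincide in $L^p \mathrm{H}_{\mathrm{dR}}^2(R)$.

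It then remains to choose $u$ so that both $\theta$ and $\Theta$ are non-zero, i.e.\ so that neither $X \cdot u$ nor $Y \cdot u$ vanishes identically. Since in exponential coordinates $X$ and $Y$ agree with $\partial_x$ and $\partial_y$ up to a central correction proportional to $\partial_z$ that vanishes along the $z$-axis, any generic compactly supported bump function $u$ with non-trivial $x$- and $y$-dependence near the origin will work. There is no real obstacle here: all the substance has been packaged into Corollary \ref{boundary-cor} and Lemma \ref{complex-lem1}(1), and the plan is simply to exploit the Heisenberg coframe identity for $du$ to produce $f$ and $g$ whose $dx$- and $dy$-contributions are related modulo exact and vertical pieces.
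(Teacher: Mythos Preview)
Your proof is correct and follows essentially the same idea as the paper: both start from an auxiliary $u\in\Omega_c^0(N)$ and exploit the identity $du=(X\cdot u)\,dx+(Y\cdot u)\,dy+(Z\cdot u)\,\tau$ to manufacture $\theta$ and $\Theta$ whose difference is exact plus vertical. The only distinction is in packaging: the paper invokes the criterion of Proposition~\ref{null-prop}(2) (checking that $d\bigl(\theta-\Theta-\mathcal L((d(\theta-\Theta))_1)\,\tau\bigr)=0$), whereas you bypass that proposition and appeal directly to the more primitive ingredients Corollary~\ref{boundary-cor} (for the exact piece $-du$) and Lemma~\ref{complex-lem1}(1) (for the vertical piece $(Z\cdot u)\,\tau$), together with the Hausdorff property from Theorem~\ref{complex-thm}(3). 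Since Proposition~\ref{null-prop}(2) is itself built from those same ingredients, the two arguments are equivalent.
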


\begin{proof} Let $u$ be an arbitrarily non-zero function in $\Omega _c ^0 (N)$. Its differential is $(X\cdot u)dx+ (Y\cdot u)dy + (Z\cdot u) \tau$. Set $f:= X\cdot u$, $g:= -Y\cdot u$, $h := -Z\cdot u$, and let $\theta =fdx$ and $\Theta =gdy$. By Proposition \ref{null-prop}(2), one has $[d(\chi \cdot \pi _N ^* \theta)] = [d(\chi \cdot \pi _N ^* \Theta)]$ in $L^p \mathrm{H}_{\mathrm{dR}}^2 (R)$. Indeed: 
$$d(\theta - \Theta) = d(h\tau) = dh\wedge \tau + hd\tau,$$
thus $\mathcal L ((\theta -\Theta ) _{1}) = h$, and we get that $d(\theta - \Theta - \mathcal L ((\theta -\Theta ) _{1}) \tau ) =0$.
\end{proof}

 \subsection{Proof of Theorem \ref{SL3-theorem}}\label{SL3-proof}

Thanks to Propositions \ref{SL3-obs1} and \ref{SL3-obs3}, we can restrict ourselves to the region $p \in (\frac{4}{3}; 4)$. In this region, Proposition \ref{SL3-obs2} shows that there is a linear isomorphism 
\begin{equation}\label{SL3-iso}
 L^p \mathrm{H}_{\mathrm{dR}}^2 (S) \simeq \Bigl\{[\omega ] \in L^p \mathrm{H}_{\mathrm{dR}}^2 (R) : \int _{\mathbf R} \Vert {e^{s\mathrm{ad}\eta}}^* [\omega] \Vert^p _{L^p \mathrm{H}^ 2(R)} ds < \infty \Bigr\}.
\end{equation}
We will use this representation to prove the theorem.

{\it Step 1}. $L^p \mathrm{H}_{\mathrm{dR}}^2 (S) = \{0\}$ when $p \in (\frac{4}{3}; 2)$.

Let $p \in (\frac{4}{3}, 2)$ and let $(q, 2)$ be the Poincar\'e dual of $(p, 2)$ relatively to $R$. One has $q \in (2; 4)$. According to the relation (\ref{SL3-iso}), it is enough to show that for every non-trivial $[\omega]$, one has $\Vert {e^{s\mathrm{ad}\eta}}^* [\omega] \Vert_{L^p \mathrm{H}^ 2(R)} \to +\infty$, either when $s$ tends to $+\infty$ or to $-\infty$.

So let $[\omega]$ be a non-trivial class in $L^p \mathrm{H}_{\mathrm{dR}}^2 (R)$. By Theorem \ref{Lie-thm}(4), it admits a boundary value $T \in {\mathcal D’} ^{2}(N) \cap \Ker d$, so that
$$\Vert [\omega] \Vert_{L^p \mathrm{H}^ 2(R)} = \sup \Bigl\{T(\theta) : \theta \in \Omega _c ^{1}(N) , \bigl\Vert [d(\chi \cdot \pi ^* _N \theta )] \bigr\Vert _{L^q \mathrm{H}^ 2(R)} \leqslant 1\Bigr\}. $$
In the group $R$, right multiplication by $\exp t \xi$ commutes with conjugacy by $\exp s \eta$. Therefore the boundary value of the class ${e ^{s \mathrm{ad} \eta}}^* [\omega]$ is the current ${e ^{s \mathrm{ad} \eta}}^* T$.
With a change of variable, one gets
\begin{align*}
\Vert {e ^{s \mathrm{ad} \eta}}^*[\omega] \Vert_{L^p \mathrm{H}^ 2(R)} = \sup \Bigl\{&T(\theta) : \theta \in \Omega _c ^{1}(N), \mathrm{~with}\\ &\bigl\Vert [d(\chi \cdot \pi ^* _N {e ^{s \mathrm{ad} \eta}}^*\theta )] \bigr\Vert _{L^q \mathrm{H}^ 2(R)} \leqslant 1\Bigr\}.
\end{align*}
By Lemma \ref{complex-lem2}, the current $T$ is vertical, thus it can be written as $T = F dy \wedge \tau + G dx \wedge \tau$, with $F, G \in {\mathcal D’} ^{0}(N)$. If $F \neq 0$ (resp. \!$G \neq 0$), then for $\theta = fdx \in \Omega _c ^{1} (N)$ (resp. \!$\theta = gdy$), one has $T(\theta) = F(f \mathrm{vol})$ (resp. \!$-G(g \mathrm{vol})$). In any case, there exists $\theta  \in \Omega _c ^1 (N)$, of the form $fdx$ or $gdy$, such that $T(\theta )=1$.
The above equality in combination with Lemma \ref{SL3-aux1}(2), yields that
$$\Vert {e ^{s \mathrm{ad} \eta}}^*[\omega] \Vert_{L^p \mathrm{H}^ 2(R)} \geqslant \bigl\Vert [d(\chi \cdot \pi ^* _N {e ^{s \mathrm{ad} \eta}}^*\theta )] \bigr\Vert _{L^q \mathrm{H}^ 2(R)} ^{-1} \to +\infty, $$
either when $s$ tends to $+\infty$ or to $-\infty$.

{\it Step 2}. $L^p \mathrm{H}_{\mathrm{dR}}^2 (S) \neq \{0\}$ when $p \in (2; 4)$.
 
Let $p \in (2; 4)$. We will exhibit some non-trivial element in the right-hand side of (\ref{SL3-iso}).

Let $\theta =fdx$ and $\Theta =gdy$ be as in Lemma \ref{SL3-aux2}. Set $\omega := d(\chi \cdot \pi _N ^* \theta)$ and $\Omega := d(\chi \cdot \pi _N ^* \Theta)$. By Lemmata \ref{SL3-aux2} and \ref{SL3-aux1}(1), their classes $[\omega]$ and $[\Omega]$ are equal and non-zero in $L^p \mathrm{H}_{\mathrm{dR}}^2 (R)$. 

Since $\xi$ and $\eta$ commute, one has ${e^{s\mathrm{ad} \eta}}^* \omega = d(\chi \cdot \pi _N ^* {e^{s\mathrm{ad} \eta }}^* \theta)$.
Thus by Lemma \ref{SL3-aux1}(2), the norm $\Vert {e^{s\mathrm{ad}\eta }}^*[\omega] \Vert _{L^p \mathrm{H}^ 2(R)}$ tends to $0$ exponentially fast when $s$ tends to $-\infty$, and similarly for $\Vert {e^{s\mathrm{ad}\eta}}^* [\Omega] \Vert _{L^p \mathrm{H}^ 2(S)}$ when $s$ tends to $+\infty$. Since $[\omega] =[\Omega]$, the integral $\int _{\mathbf R} \Vert {e^{s\mathrm{ad}\eta}}^* [\omega] \Vert^p _{L^p \mathrm{H}^ 2(R)} ds $ converges. Thus $[\omega]$ provides a non-trivial element in the right-hand side of (\ref{SL3-iso}). 

\appendix

\section{The groups $S _\alpha \in \mathcal S ^{r, n}$: basic properties}\label{basic-sec}

Let $r , n \in \mathbf N$. We consider here the solvable Lie groups of the form $S_\alpha = \mathbf R^r \ltimes _\alpha \mathbf R^n$, where $\alpha : \mathbf R^r \to \{\mathrm{diagonal~~automorphisms~~of~~}\mathbf R^n\}$ is a Lie group morphism. We denote by $\varpi _i \in (\mathbf R^r)^*$ ($i = 1, \dots, n$) the {\it weights} associated to $\alpha$, \emph{i.e.\!} the linear forms so that
$\alpha = e ^{\mathrm{diag}(\varpi _{1}, \dots , \varpi _{n})}.$

Some groups $S _\alpha$ can be written with several couples of exponents $r, n$ -- {\it e.g.\!} when they are abelian. In the sequel we will always assume that the dimension $n$ of the second factor is minimal. This assumption is equivalent to require every weight $\varpi _i$ to be non-zero; and it forces the rank of $S _\alpha$ to be equal to $r$.

Let denote by $\mathcal S ^{r, n}$ the set of the groups $S_\alpha$ of exponents $r, n$ (with the above convention).

We remark that the groups $S_\alpha$ appear as special cases of the so-called \emph{abelian-by-abelian} solvable Lie groups. The latter ones are considered by Peng in \cite{Pen1, Pen2}. For those which are in addition unimodular, she establishes several quasi-isometric rigidity results. 

This Section is devoted to the following proposition, which establishes some of the basic properties of $S_\alpha$.
The first two are elementary. The third one follows from Azencott-Wilson’s classification of Lie groups of non-positive curvature \cite{AW} (see also \cite{Heb} for a detailed account of their geometric properties).

\begin{proposition}\label{appendix_prop} Let $S _\alpha \in \mathcal S ^{r, n}$. Then:

\begin{enumerate}

\item $S_\alpha$ admits {\bf no} non-trivial abelian direct factor if, and only if, the weights $\varpi _{1}, \dots , \varpi _{n}$ generates the vector space $(\mathbf R ^r)^*$.

\item Suppose that $S_\alpha$ has no abelian direct factor. Then $S_\alpha$ is reductible -- {\it i.e.\!} splits as a direct product of non-trivial closed subgroups -- if and only if there exists a non-trivial partition $I _{1} \sqcup I _{2} = \{1, \dots , n\}$
such that $$\mathrm{Span}\{\varpi _i : i \in I _{1}\} \cap \mathrm{Span}\{\varpi _i : i \in I _{2}\} = \{0\}.$$

\item $S_\alpha$ admits a left-invariant Riemannian metric of non-positive curvature if, and only if, $0$ does {\bf not} belong to the convex hull of $\{\varpi _{1}, \dots, \varpi _{n}\}$ in $(\mathbf R^r)^*$. In this case the factor $\mathbf R ^r$ is a totally geodesic Euclidean subspace of maximal dimension in $S _\alpha$.

\item There exists $\xi \in \mathbf R^r$, such that the associated subgroup $( \mathbf R \xi) \ltimes \mathbf R^n$ is quasi-isometric to the real hyperbolic space $\mathbb H ^{n+1}_{\mathbf R}$ if, and only if, the weights $\varpi _{1}, \dots, \varpi _{n}$ are contained in an affine subspace of $(\mathbf R^r)^*$, disjoint from $0$.
\end{enumerate}
\end{proposition}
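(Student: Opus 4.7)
The plan is to handle parts (1) and (2) by direct Lie-algebra bookkeeping in the semidirect product and to reduce (3) and (4) to established rigidity results. Let $\mathfrak{s}_\alpha$ denote the Lie algebra of $S_\alpha$. Two basic facts will be used throughout: first, the brackets $[X,e_i]=\varpi_i(X)e_i$ for $X\in\mathbf{R}^r$ show, together with the minimality assumption that each $\varpi_i$ is nonzero, that $[\mathfrak{s}_\alpha,\mathfrak{s}_\alpha]=\mathbf{R}^n$, so the nilradical $\mathbf{R}^n$ is canonically attached to $S_\alpha$; and second, a direct computation of commutators, again using the nonvanishing of the weights, identifies the center of $\mathfrak{s}_\alpha$ with $\bigcap_{i=1}^n\ker\varpi_i\subset\mathbf{R}^r$.

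For (1), the center is trivial iff the weights span $(\mathbf{R}^r)^*$; otherwise, choosing a linear complement $V'$ of $V:=\bigcap_i\ker\varpi_i$ in $\mathbf{R}^r$ produces the decomposition $S_\alpha=V\times(V'\ltimes_\alpha\mathbf{R}^n)$, and conversely every abelian direct factor lies in the center. For (2), assume no abelian direct factor and a splitting $\mathfrak{s}_\alpha=\mathfrak{g}_1\oplus\mathfrak{g}_2$. Taking derived subalgebras yields an $\mathbf{R}^r$-invariant decomposition $\mathbf{R}^n=\mathfrak{n}_1\oplus\mathfrak{n}_2$ with $\mathfrak{n}_k=[\mathfrak{g}_k,\mathfrak{g}_k]$; each summand is a sum of weight spaces, so after modifying the basis of $\mathbf{R}^n$ within each weight space (which does not affect the spans of the weights) it is of the form $\mathrm{Span}\{e_i:i\in I_k\}$ for a partition $I_1\sqcup I_2$. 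Passing to the quotient $\mathfrak{s}_\alpha/\mathbf{R}^n\simeq\mathbf{R}^r$ gives a splitting $\mathbf{R}^r=V_1\oplus V_2$ with $V_k=\mathfrak{g}_k/\mathfrak{n}_k$; since $[\mathfrak{g}_1,\mathfrak{g}_2]=0$, the action of $V_k$ annihilates $\mathfrak{n}_{3-k}$, so $V_k\subset W_{3-k}^\perp$ for $W_k:=\mathrm{Span}\{\varpi_i:i\in I_k\}$. A dimension count using $V_1\oplus V_2=\mathbf{R}^r$ and $W_1+W_2=(\mathbf{R}^r)^*$ (from part 1) then forces $W_1\cap W_2=\{0\}$. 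Conversely, given such a partition, the subgroups $G_k:=W_{3-k}^\perp\ltimes\mathrm{Span}\{e_i:i\in I_k\}$ commute by construction and exhaust $S_\alpha$.

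For (3), I will invoke the Azencott--Wilson classification \cite{AW}: a simply connected solvable Lie group carries a left-invariant metric of nonpositive curvature iff its Lie algebra admits an Iwasawa-type decomposition $\mathfrak{a}\oplus\mathfrak{n}$ with $\mathfrak{a}$ abelian acting $\mathbf{R}$-semisimply on $\mathfrak{n}$ by weights contained in a common open half-space. For $S_\alpha$ this decomposition is the given one and the half-space condition is exactly $0\notin\mathrm{conv}\{\varpi_1,\dots,\varpi_n\}$; the maximality of the flat $\mathbf{R}^r$ then follows from Heber's description of flats in such metrics \cite{Heb}. For (4), the subgroup $(\mathbf{R}\xi)\ltimes\mathbf{R}^n$ is a negatively curved (Heintze) group iff the numbers $\varpi_i(\xi)$ are all nonzero of the same sign, in which case it is a diagonal Heintze group with eigenvalues $\varpi_1(\xi),\dots,\varpi_n(\xi)$. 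Pansu's rigidity for Heintze groups \cite{P2}, together with Xie's refinement \cite{Xie}, then gives that it is quasi-isometric to $\mathbb{H}^{n+1}_{\mathbf{R}}$ iff this derivation is scalar, i.e.\ iff all $\varpi_i(\xi)$ are equal to some $c\ne 0$; this is exactly the condition that the $\varpi_i$ lie on the affine hyperplane $\{\varphi:\varphi(\xi)=c\}$, disjoint from $0$. Conversely, if the affine hull of the $\varpi_i$ avoids $0$, a standard linear-algebra argument produces $\xi$ and $c\ne 0$ with $\varpi_i(\xi)=c$ for all $i$. The main real obstacle I anticipate is the bookkeeping in (2)---one must remember that $\mathbf{R}^r\subset\mathfrak{s}_\alpha$ is not a canonical subalgebra in a given direct-product decomposition, and so the splitting of $\mathbf{R}^r$ has to be read off the canonical quotient $\mathfrak{s}_\alpha/\mathbf{R}^n$---while (3) and (4) should reduce essentially to citing the correct rigidity theorems.
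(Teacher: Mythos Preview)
Your proposal is correct and follows the same approach as the paper: elementary Lie-algebra bookkeeping for (1) and (2), and appeals to Azencott--Wilson for (3) and to the quasi-isometric rigidity of diagonal Heintze groups (Pansu, Cornulier--Tessera) for (4). In (2) your dimension count via the quotient $\mathfrak{s}_\alpha/\mathbf{R}^n$ and the inclusions $V_k\subset W_{3-k}^\perp$ is a mild streamlining of the paper's more explicit derivation of the structure $\mathfrak{h}_j=E_j\ltimes[\mathfrak{h}_j,\mathfrak{h}_j]$ via the projection onto $\mathbf{R}^r$, but the substance is identical; incidentally, the basis modification you allow for is unnecessary, since the very conclusion $W_1\cap W_2=\{0\}$ forces each full weight space to lie entirely in one $\mathfrak{n}_k$.
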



It follows from Item (2) above, that the groups $S_\alpha$ which are irreducible, of higher rank, and of smallest dimension, belong to the family $\mathcal S^{2,3}$.

\begin{proof}
We start with some notations and preliminaries. Set $I:= \{1, \dots , n \}$ and denote by $(e _i)_{i \in I}$ the canonical basis of $\mathbf R ^{n}$.
The multiplication law of $S_\alpha$ is 
$$(u,x) \cdot (v,y) = \bigl(u+v, x+\alpha (u)y \bigr).$$
Let us denote the Lie algebra of $S_\alpha$ by $\frak s _\alpha$, and set $\Pi:= \mathrm{diag}(\varpi _{1}, \dots, \varpi _{n}) \in (\mathbf R ^{r})^* \otimes \mathrm{Diag}(\mathbf R ^{n})$, so that $\alpha = e^\Pi$. A standard computation gives the following expression for the Lie bracket in $\frak s _\alpha$:
\begin{equation}\label{bracket}
[(U,X), (V,Y)] = \bigl(0,\Pi(U)Y-\Pi(V)X \bigr).
\end{equation}
Since every $\varpi _i$ is non-zero, one checks that the center of $\frak s _\alpha$ is $$\frak z (\frak s _\alpha) = \{(U, 0) \in \frak s _\alpha ~\vert ~ \varpi _i (U) =0 \mathrm{~~for~~all~~}i\},$$ and its derived subalgebra is $[\frak s _\alpha , \frak s _\alpha]= \{0 _{\mathbf R ^r}\} \times \mathbf R ^n \simeq \mathbf R^n$.

\noindent{\it Proof of (1)} :
Suppose that the weights $\varpi _{1}, \dots , \varpi _{n}$ do not generate $(\mathbf R ^{r})^*$. Then $\Ker \Pi= \cap _{i \in I} \Ker \varpi _i$ is non-trivial. 
Write $\mathbf R ^{r} = \Ker \Pi \oplus E$, set $\frak h := \Ker \Pi \times \{0 _{\mathbf R ^{n}}\}$ and $\frak k := E \ltimes \mathbf R ^{n}$. 
Then $\frak h$ and $\frak k$ are supplementary subalgebras in $\frak s _\alpha$; moreover $\frak h$ is central, thus $S _\alpha$ admits a direct abelian factor.

Conversely, suppose that $S _\alpha $ admits a direct abelian factor. Then the center of $\frak s _\alpha$ is non-trivial. By the above description of $\frak z (\frak s _\alpha)$, there exists a non-zero $U \in \mathbf R ^r$ that belongs to the kernel of every $\varpi _i$. Therefore the $\varpi _i$’s do not generate $(\mathbf R ^{r})^*$.

\noindent{\it Proof of (2)} : Let $I = I_{1} \sqcup I_{2}$ be a non-trivial partition as in the statement. One has
$$\bigcap _{i \in I _{1}} \Ker (\varpi _i ) + \bigcap _{i \in I _{2}} \Ker (\varpi _i ) = \mathbf R ^r.$$
By assumption, every $\varpi _i$ is non-zero. Therefore $\mathbf R ^r$ admits proper supplementary subspaces $E_{1}, E_{2}$ such that for $j=1,2$:
$$\{0\} \neq E_j \subset \bigcap _{i \in I \setminus I_{j}} \Ker (\varpi _i ).$$
Set $\frak h_j:= E_j \ltimes \mathrm{Span}\{e_i : i \in I_j\}$. Then $\frak h_1$ and $\frak h_2$ are supplementary subalgebras in $\frak s _\alpha$. They satisfy $[\frak h _{1}, \frak h_{2}] =0$; indeed, for every $U \in E _{1}$ and $Y \in \mathrm{Span}\{e_i~;~i \in I_2\}$, one has
$$\Pi(U) Y = \sum _{i \in I _{2}} Y_{i} \varpi _i (U) e _i =0,$$
and similarly for $V \in E _{2}$ and $X \in \mathrm{Span}\{e_i : i \in I_1\}$. Thus $S_\alpha$ is reducible.

Conversely, suppose that $\frak s _\alpha$ admits supplementary (non-abelian) subalgebras $\frak h _{1}$, $\frak h _{2}$ such that 
$[\frak h _{1}, \frak h _{2}]=0$.
One has
$$[\frak h_{1}, \frak h_{1}] \cap [\frak h_{2}, \frak h_{2}] \subset \frak h_{1} \cap \frak h_{2}= \{0\}$$
$$\mathrm{and~~~}~[\frak s_\alpha , \frak s_\alpha] = [\frak h_{1} + \frak h_{2} , \frak h_{1} + \frak h_{2}] = [\frak h _{1}, \frak h _{1}] + [\frak h_{2}, \frak h_{2}].$$
Therefore $[\frak s_\alpha , \frak s_\alpha] = [\frak h _{1}, \frak h_{1}] \oplus [\frak h_{2}, \frak h_{2}]$,
which in turn implies that
\begin{equation}\label{commutator}
\frak h_{j} \cap [\frak s_\alpha , \frak s_\alpha] = [\frak h_{j}, \frak h_{j}],
\end{equation}
for $j=1,2$.
Let $\sigma: \mathbf R ^r \times \mathbf R^n \to \mathbf R^r$ be the projection map on the first factor, and set $E_j := \sigma (\frak h _j)$ for $j =1,2$. We claim that
\begin{equation}\label{h}
\frak h _j = E _j \ltimes [\frak h _j , \frak h _j].
\end{equation}
To see this, let $(U, X) \in \frak h _j$. One has $U = \sigma (U, X)$, thus $U \in E _j$. Since $\frak h _j$ is an ideal, relations (\ref{bracket}) and (\ref{commutator}) imply for every $V \in \mathbf R ^r$:
$$\bigl(0, -\Pi (V)X\bigr) = [(U, X), (V,0)] \in \frak h_{j} \cap [\frak s _\alpha , \frak s _\alpha] = [\frak h _{j}, \frak h _{j}].$$
By choosing $V$ such that $\Pi(V)$ is invertible, one has $X \in \Pi (V)^{-1}[\frak h _{j}, \frak h _{j}]$. On the other hand $\frak h _j$ is an ideal, thus so is $[\frak h _{j}, \frak h _{j}]$.  In combination with (\ref{bracket}), this implies that $\Pi (V) [\frak h _{j}, \frak h _{j}] = [\frak h _{j}, \frak h _{j}]$. Therefore $X \in [\frak h _{j}, \frak h _{j}]$, and claim (\ref{h}) follows now easily.

The subalgebras $\frak h _{1}$ and $\frak h _{2}$ are supplementary subspaces in $\frak s _\alpha$, thus so are $E _{1}$ and $E _{2}$ in $\mathbf R ^r$ -- thanks to (\ref{h}). Since $\frak h _j$ is non-abelian, $E _j$ is non-zero. Set $I _{1}:=\{i \in I : \varpi _i\restr _{E _{2}} =0\}$ and $I _{2}:=\{i \in I : \varpi _i\restr _{E _{1}} =0\}$. Observe that $I _{1} \cap I _{2} = \varnothing$ since every $\varpi _i$ is non-zero. We claim that $I _{1}$ and $I _{2}$ form a non-trivial partition of $I$. The proof of Item (2) follows then easily.
Let $(U, X) \in \frak h _{1}$ and $(V, Y) \in \frak h _{2}$. Since $[\frak h _{1}, \frak h _{2}]=0$, relation (\ref{bracket}) implies that $\Pi (U)Y = \Pi (V)X$. By (\ref{h}), one has $X \in [\frak h _{1}, \frak h _{1}]$ and $Y \in [\frak h _{2}, \frak h _{2}]$. Moreover, $[\frak h _{j}, \frak h _{j}]$ is an ideal, so it is invariant by $\Pi(\mathbf R ^r)$. Therefore $\Pi(U)Y$ and  $\Pi(V)X$ belong to $[\frak h_{j}, \frak h_{j}] $ for $j=1,2$. Since the intersection of the latter subspaces is $\{0\}$, the vectors $\Pi (U)Y$ and $\Pi (V)X$ are $0$. In other words, we have shown that
$$[\frak h_{1}, \frak h_{1}] \subset \bigcap _{V \in E _{2}} \Ker \Pi (V) ~\mathrm{~~and~~}~[\frak h_{2}, \frak h_{2}] \subset \bigcap _{U \in E _{1}} \Ker \Pi (U).$$
An easy computation shows that $\bigcap _{V \in E _{2}} \Ker \Pi (V) =\mathrm{Span}\{e _i : i \in I _1\}$, and similarly $\bigcap _{U \in E _{1}} \Ker \Pi (U) =\mathrm{Span}\{e _i : i \in I _2\}$. Since the subspaces $[\frak h _{1}, \frak h _{1}]$ and $[\frak h_{2}, \frak h_{2}]$ generate  $\mathbf R^n$, we finally obtain that $I _{1} \cup I _{2} =I$.

\noindent{\it Proof of (3)} : According to \cite{AW}, a connected Lie group $S$ admits a left-invariant non-positively curved Riemannian metric if, and only if, its Lie algebra $\frak s$ is an {\it NC algebra}. This means that $\frak s$ enjoys the following properties:
\begin{itemize}
\item [(i)] $\frak n :=[\frak s, \frak s]$ is a nilpotent ideal that is complemented in $\frak s$ by an abelian subalgebra $\frak a$.
\item [(ii)] There exists an element $\xi \in \frak a$, such that all the eigenvalues of $\mathrm{ad}\xi \restr _{\frak n}$ have negative real parts.
\item [(iii)] The action of $\frak a$ on $\frak n$ satisfies 3 additional conditions, which are automatically fulfilled when $\frak n$ is abelian and the $\frak a$-action is semisimple with real eigenvalues.
\end{itemize}
We refer to \cite[Definition 6.2]{AW} for the precise definition of NC algebra, and to the paragraph right after it for a discussion of the special cases.

Clearly the Lie algebra $\frak s _\alpha$ satisfies Items (i) and (iii). It satisfies (ii) if and only if there exists a $U \in \mathbf R ^r$ so that $\varpi _i (U) \leqslant -1$ for all $i \in I$. Let $v _i \in \mathbf R^r$ be such that $\varpi _i = \langle \cdot, v _i \rangle$, where $\langle \cdot , \cdot \rangle$ is a scalar product. One has $\varpi _i (U) \leqslant -1$ for every $i \in I$ if, and only if, every $v _i$ belongs to the subset defined by the inequality $\langle U, \cdot \rangle \leqslant -1$; {\it i.e.\!} to the affine half-space of $\mathbf R^r$, disjoint from $0$, and delimited by the hyperplane orthogonal to $U$ passing through $\frac{-U}{\Vert U \Vert ^2}$. The proof of Item (3) is now complete.

\noindent{\it Proof of (4)} : Let $\xi \in \mathbf R ^r$ be a non-zero vector. The associated subgroup $(\mathbf R \xi) \ltimes \mathbf R ^n$ is isomorphic to $\mathbf R \ltimes _\delta \mathbf R ^n$, where $\delta \in \mathrm{Der}(\mathbf R^n)$ is the derivation $\mathrm{diag}(\varpi _1(\xi), \dots , \varpi _n (\xi))$. Such a group is quasi-isometric to $\mathbb H ^{n+1} _{\mathbf R}$ if and only if $\delta$ is a multiple of $-I _n$ \cite{P2, CT}. Therefore the existence of a $\xi$ as in the statement, is equivalent to the existence of a $U \in \mathbf R ^r$ such that $\varpi _i (U) =-1$ for every $i \in I$. Consider again the vectors $v _i \in \mathbf R^r$ so that $\varpi _i = \langle \cdot, v _i \rangle$, where $\langle \cdot , \cdot \rangle$ is a scalar product. One has $\varpi _i (U) =-1$ for every $i \in I$ if, and only if, every $v _i$ belongs to the subset defined by the equation $\langle U, \cdot \rangle =-1$, {\it i.e.\!} to the affine hyperplane which is orthogonal to $U$ and which passes through $\frac{-U}{\Vert U \Vert ^2}$.
\end{proof}

\def\cprime{$'$}

\noindent Laboratoire Paul Painlev\'e, UMR 8524 CNRS / Universit\'e de Lille, 
Cit\'e Scientifique, B\^at. M2, 59655 Villeneuve d'Ascq, France. \\
E-mail: {\tt marc.bourdon@univ-lille.fr}.

\noindent Unit\'e de Math\'ematiques Pures et Appliqu\'ees, UMR 5669 CNRS / \'Ecole normale sup\'erieure de Lyon, 
46 all\'ee d'Italie, 69364 Lyon cedex 07,  France\\
E-mail: {\tt bertrand.remy@ens-lyon.fr}.

\end{document}